\newtheorem{thm}{Theorem}[section]
\newtheorem{prop}[thm]{Proposition}
\newtheorem{lem}[thm]{Lemma}
\newtheorem{cor}[thm]{Corollary}
\newtheorem{rem}[thm]{Remark}
\begin{document}

\title[A generalization of the Dijkgraaf-Witten invariants]%
      {A generalization of the Dijkgraaf-Witten invariants for cusped 3-manifolds}
\author[N. Kimura]{Naoki Kimura}{}
\address{Department of Mathematics, Graduate School of Fundamental Science and Engineering, Waseda University, 3-4-1 Okubo, Shinjuku, Tokyo 169-8555, Japan}
\email{noverevitheuskyk@toki.waseda.jp}
\maketitle
\thispagestyle{empty}


\begin{abstract}
We introduce a generalization of the Dijkgraaf-Witten invariants for cusped or compact oriented 3-manifolds.  
We show that the generalized DW invariants distinguish some pairs of cusped hyperbolic 3-manifolds 
with the same hyperbolic volumes and with the same Turaev-Viro invariants.
We also present an example of a pair of cusped hyperbolic 3-manifolds with the same hyperbolic volumes and with the same homology groups,
whereas with distinct generalized DW invariants.
\end{abstract}

\section{Introduction}

In 1990 Dijkgraaf and Witten \cite{Dijkgraaf} introduced a topological invariant of closed oriented 3-manifolds using a finite group and its 3-cocycle.  
Let $M$ be a closed oriented 3-manifold, $G$ a finite group and $\alpha \in Z^3(BG,U(1))$.  
Then the Dijkgraaf-Witten invariant $Z(M)$ (we abbreviate it to the DW invariant in this paper) is defined as follows:
\begin{equation*}
Z(M) = \frac{1}{|G|}\sum_{\gamma\in {\rm Hom}(\pi_1(M),G)} \langle\gamma^*[\alpha],[M]\rangle \in \mathbb{C} .
\end{equation*}

\noindent The topological invariance of $Z(M)$ is obvious from the definition and it is also evident that $Z(M)$ is a homotopy invariant
since $M$ only appears at the fundamental group and the fundamental class in the definition of $Z(M)$.

Dijkgraaf and Witten reformulated the invariant by using a triangulation of $M$ in the following way.  
Let $K$ be a triangulation of $M$.  Then the fundamental class of $M$ is described by the sum of the tetrahedra of $K$ 
and $\gamma \in {\rm Hom}(\pi_1(M),G)$ is represented by assigning an element of $G$ to each edge of $K$.  
$Z(M)$ is described as follows:
\begin{equation*}
Z(M) = \frac{1}{|G|^a}\sum_{\varphi\in {\rm Col}(K)} \prod_{{\rm tetrahedron}}  \alpha(g,h,k) ^{\pm1},
\end{equation*}

\noindent where $a$ is the number of the vertices of $K$ and $g,h,k \in G$ are colors of edges of a tetrahedron of $K$.  
Wakui \cite{Wakui1} proved the topological invariance of the DW invariant in this combinatorial construction.  
Due to the above construction of $Z(M)$ by using a triangulation, we can view the DW invariant as the \lq\lq{Turaev-Viro type}\rq\rq invariant.

This construction by using a triangulation enable us to define the DW invariant for a compact oriented 3-manifold $M$ with $\partial M \neq \emptyset$.  
However, for $\partial M \neq \emptyset$ case, the DW invariant of $M$ is determined 
not only by $M$ but also by a triangulation of $\partial M$ and its coloring.  

Here we construct another version of the DW invariant, which we call the generalized DW invariant.  
For a compact oriented 3-manifold $M$ with $\partial M \neq \emptyset$, the generalized DW invariant of $M$ does not need a triangulation of $\partial M$ nor its coloring.  
We can achieve that by using an ideal triangulation of a compact oriented 3-manifold with non-empty boundary or a cusped oriented 3-manifold.  
This is an analogy of the construction of the Turaev-Viro invariant in \cite{Benedetti1} for a compact 3-manifold with non-empty boundary or a cusped 3-manifold.


We calculate the generalized DW invariants for some examples and show that 
the invariants actually distinguish some pairs of cusped hyperbolic 3-manifolds with the same hyperbolic volumes and with the same Turaev-Viro invariants.
We also give an example of a pair of cusped hyperbolic 3-manifolds with the same hyperbolic volumes and with the same homology groups, 
meanwhile with distinct generalized DW invariants.

\vspace{0.35cm}
 
\begin{acknowledgment}
The author would like to express his gratitude to Professor Jun Murakami for his many useful suggestions.  
He would like to thank Professor Yuji Terashima for useful communications. 
He also would like to thank members of Murakami\rq s laboratory, especially Toshihiro Asami, Yuta Kadokura and Atsuhiko Mizusawa for helpful conversation.
\end{acknowledgment}

\section{Definition of the generalized Dijkgraaf-Witten invariant}

First we review the group cohomology briefly.  Let $G$ be a finite group and $A$ a multiplicative abelian group.  
The $n$-cochain group $C^n(G,A)$ is defined as follows:

\[
 C^n(G,A) = 
  \begin{cases}
   A & (n = 0)\\
   \{ \alpha : \overbrace{G\times \cdots \times G}^{n} \rightarrow A \} & (n\ge 1).
  \end{cases}
\]

\noindent The group operation of $C^n(G,A)$ is a multiplication of maps induced by the multiplication of $A$ and then $C^n(G,A)$ is a multiplicative abelian group 
since $A$ is so.  
The $n$-coboundary map $\delta^n :  C^n(G,A) \rightarrow  C^{n+1}(G,A)$ is defined by
$$(\delta^0 a)(g) = 1 \quad (a \in A,\;g \in G),$$
$$(\delta^n \alpha)(g_1,\ldots,g_{n+1}) =$$ 
$$\alpha (g_2,\ldots,g_{n+1}) (\prod_{i = 1}^n  {\alpha (g_1,\ldots,g_i g_{i+1},\ldots,g_{n+1})}^{{(-1)}^i}) {\alpha (g_1,\ldots,g_n)}^{{(-1)}^{n+1}},$$ 
$$(\alpha \in C^n(G,A),\;g_1,\ldots,g_{n+1} \in G,\;n\ge 1).$$

\noindent Then we can confirm by the above definition that $\{(C^n(G,A), \delta^n)\}_{n=0}^{\infty}$ is a cochain complex.  
Hence the $n$-cocycle group $Z^n(G,A)$ and the $n$-th cohomology group $H^n(G,A)$ are defined as usual.

An $n$-cochain $\alpha \in C^n(G,A)$ is said to be {\it normalized} if for any $g_1,\ldots,g_n \in G$, $\alpha$ satisfies
$$\alpha(1,g_2,\ldots,g_n) = \alpha(g_1,1,g_3,\ldots,g_n) = \cdots =\alpha(g_1,\ldots,g_{n-1},1) = 1 \in A.$$

\noindent If $\alpha$ and $\beta$ are normalized $n$-cochains, $\alpha \beta$ and $\alpha^{-1}$ are also normalized $n$-cochains 
and $\delta^n \alpha$ is a normalized ($n+1$)-coboundary.  Eilenberg and MacLane proved the following proposition \cite[Lemma 6.1 and Lemma 6.2]{Eilenberg}. 

\begin{prop}
For any cochain $\alpha$, there exists a normalized cochain $\alpha\rq$ which is cohomologous to $\alpha$.  
For any normalized n-coboundary $\alpha$, there exists a normalized (n-1)-cochain $\beta$ such that $\alpha = \delta^{n-1} \beta$.
\end{prop}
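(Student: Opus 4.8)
The plan is to read both assertions as the classical \emph{normalization theorem} for the inhomogeneous bar complex $\{(C^n(G,A),\delta^n)\}_{n\ge 0}$, and to prove them by exhibiting an explicit deformation retraction of this complex onto the subgroup $N^n\subseteq C^n$ of normalized cochains. The excerpt already records that $N^\ast$ is a subcomplex, since $\delta^n$ sends normalized cochains to normalized cochains, so it suffices to produce a cochain map $P\colon C^\ast\to C^\ast$ whose image is $N^\ast$ and with $P|_{N^\ast}=\mathrm{id}$, together with maps $h\colon C^n\to C^{n-1}$ realizing a cochain homotopy $\mathrm{id}-P=\delta h+h\delta$. The degeneracy of a cochain is measured by the codegeneracy operators $\sigma_k\colon C^n\to C^{n-1}$, $1\le k\le n$, defined by $(\sigma_k\alpha)(g_1,\dots,g_{n-1})=\alpha(g_1,\dots,g_{k-1},1,g_k,\dots,g_{n-1})$; a cochain is normalized precisely when $\sigma_k\alpha$ is the trivial map for every $k$.

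First I would build $P$ as a composite $P=P_n\circ\cdots\circ P_1$ of one-coordinate normalizers. Given $\alpha\in C^n$, I set $P_k(\alpha)=\alpha\cdot(\delta^{n-1}\sigma_k\alpha)^{\epsilon_k}$, where the sign $\epsilon_k\in\{\pm1\}$ is chosen so that the correcting coboundary restricts to $\alpha^{-1}$ on the slice $g_k=1$; then $P_k(\alpha)$ vanishes whenever $g_k=1$. The substance of the step is to check, using the explicit formula for $\delta^{n-1}$ together with the simplicial identities relating the face operators $d_i$ (the factors of $\delta$) to the $\sigma_k$, that applying $P_k$ does not destroy the vanishing already arranged on the earlier slices $g_j=1$ with $j<k$. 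Iterating over $k$ then lands $P(\alpha)$ in $N^n$, while the telescoping product of the correcting coboundaries assembles into the homotopy $h$; one then verifies that the composite $P$ is indeed a cochain map retracting $C^\ast$ onto $N^\ast$.

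With the pair $(P,h)$ in hand, both statements fall out quickly. For the first, if $\alpha$ is a cocycle then $\delta\alpha=1$, so the homotopy identity gives $\alpha\cdot P(\alpha)^{-1}=\delta^{n-1}(h\alpha)$; hence $\alpha':=P(\alpha)$ differs from $\alpha$ by a coboundary and is therefore a cocycle cohomologous to $\alpha$, and it is normalized because it lies in the image of $P$. For the second, suppose $\alpha\in N^n$ is a coboundary, say $\alpha=\delta^{n-1}\gamma$ with $\gamma\in C^{n-1}$. Then $\beta:=P(\gamma)$ is normalized, and because $P$ is a cochain map with $P|_{N^\ast}=\mathrm{id}$ we get $\delta^{n-1}\beta=P(\delta^{n-1}\gamma)=P(\alpha)=\alpha$, so $\beta$ is the desired normalized primitive.

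The hard part will be entirely in the second paragraph: fixing the signs $\epsilon_k$ and verifying, through the simplicial identities, both that successive one-coordinate normalizers do not interfere and that the resulting composite is a genuine cochain-map retraction onto $N^\ast$ with its accompanying homotopy. This combinatorial bookkeeping is exactly the content of Eilenberg and MacLane's Lemmas~6.1 and 6.2. A structurally cleaner but less explicit alternative would be to pass to the chain side, note that the degenerate bar chains form a direct summand whose associated subcomplex is acyclic with a standard contracting homotopy, and dualize; I would nevertheless keep the explicit $(P,h)$ route, since it is what produces the normalized primitive $\beta$ in the second statement by hand.
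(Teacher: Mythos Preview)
The paper does not prove this proposition; it simply attributes it to Eilenberg--MacLane \cite[Lemmas~6.1 and~6.2]{Eilenberg}, and your outline is exactly their argument.

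Two small points to tighten when you fill in the details. First, with $P_k(\alpha)=\alpha\cdot(\delta\sigma_k\alpha)^{\epsilon_k}$ the telescoping product gives only $\alpha\cdot P(\alpha)^{-1}=\delta(h(\alpha))$, not a two-sided homotopy, and the $P_k$ are not cochain maps (compare $\delta P_k(\alpha)=\delta\alpha$ with $P_k(\delta\alpha)=\delta\alpha\cdot(\delta\sigma_k\delta\alpha)^{\epsilon_k}$). This is harmless: in the second statement you do not need $P$ to commute with $\delta$, since $\beta=P(\gamma)$ differs from $\gamma$ by a coboundary and hence $\delta\beta=\delta\gamma=\alpha$ directly. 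Second, the claim that the correcting coboundary $\delta\sigma_k\alpha$ restricts to $\alpha^{\pm1}$ on the slice $g_k=1$ is false for an arbitrary cochain; what makes the step go through in Eilenberg--MacLane is the \emph{cocycle} condition on $\alpha$ (for Lemma~6.1), respectively the normalization of $\delta\gamma$ (for Lemma~6.2), not the simplicial identities alone. Keep track of which hypothesis you are using at each stage of the iteration.
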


\noindent Hence we assume that any $n$-cochain is normalized.  
As we only consider 3-cocycles in the rest of this paper, we restate the cocycle condition for a 3-cocycle $\alpha$.
$$\alpha (h,k,l) \alpha (g,hk,l) \alpha (g,h,k) = \alpha (gh,k,l) \alpha (g,h,kl)   \quad(g,h,k,l \in G).$$

\noindent The cocycle condition takes an important role in the proof of the invariance of the generalized DW invariant in Section 3.

We can define the generalized DW invariant by using any multiplicative abelian group $A$, nevertheless we usually use $U(1)$ in the definition of the original DW invariant.  
Hence we only consider $U(1)$-valued 3-cocycles in the rest of this paper. 
 
In this paper we suppose that a triangulation $K$ of a 3-manifold is not necessarily a decompositon as a simplicial complex.  
(A triangulation in this paper means a singular triangulation in \cite{Matveev} and \cite{Turaev}.)  
For given four vertices of $K$, $K$ may have more than one tetrahedron with the given four vertices.  
For given two vertices of $K$, there may exist more than one edge connecting the given two vertices.  
If a decomposition forms a simplicial complex, we call the decomposition {\it a simplicial triangulation}. 

Let $M$ be a compact oriented 3-manifold with boundary.  
We consider a triangulation of $M$ with ideal vertices such that each boundary component of $M$ converges at an ideal vertex.  
We call such a triangulation of $M$ with ideal vertices {\it a generalized ideal triangulation} of $M$ in this paper.  
In general, a generalized ideal triangulation $K$ of $M$ has both interior vertices and ideal vertices.  
If $\partial M = \emptyset$, $K$ has no ideal vertices, that is, $K$ is an ordinary triangulation of a closed 3-manifold $M$.  
On the other hand, an ideal triangulation is a generalized ideal triangulation without interior vertices.  

Now we explain a coloring and a local order of a triangulation.  

Fix a generalized ideal triangulation $K$ of $M$. Give an orientation to each edge and each face of $K$.
{\it A coloring} $\varphi$ of $K$ is a map 
$$\varphi:\{{\rm oriented \:edges \:of} \:K\}\to G$$ 
satisfying 
$$\varphi(E_3)^{\epsilon_3}\varphi(E_2)^{\epsilon_2}\varphi(E_1)^{\epsilon_1} = 1 \in G$$ 
for oriented edges $E_1$, $E_2$ and $E_3$ of any oriented 2-face $F$ and
\[
 \epsilon_i = 
  \begin{cases}
   1 & {\rm the \:orientation \:of} \:E_i {\rm \:agrees \:with \:that \:of}\:\partial F\\
   -1 & {\rm otherwise.}
  \end{cases}
\]
(Note that the three edges $E_1$, $E_2$ and $E_3$ of $F$ are chosen along the orientation of $F$ as Figure 1.)  
The above condition for a coloring $\varphi$ is required because a coloring $\varphi$ originally comes from $\gamma \in {\rm Hom}(\pi_1(M),G)$.  
Let Col($K$) be the set of the colorings of $K$.
Note that a coloring $\varphi$ of $K$ is independent of the choice of orientations of edges and faces of $K$.

\begin{figure}
\centering
\includegraphics[width=3cm]{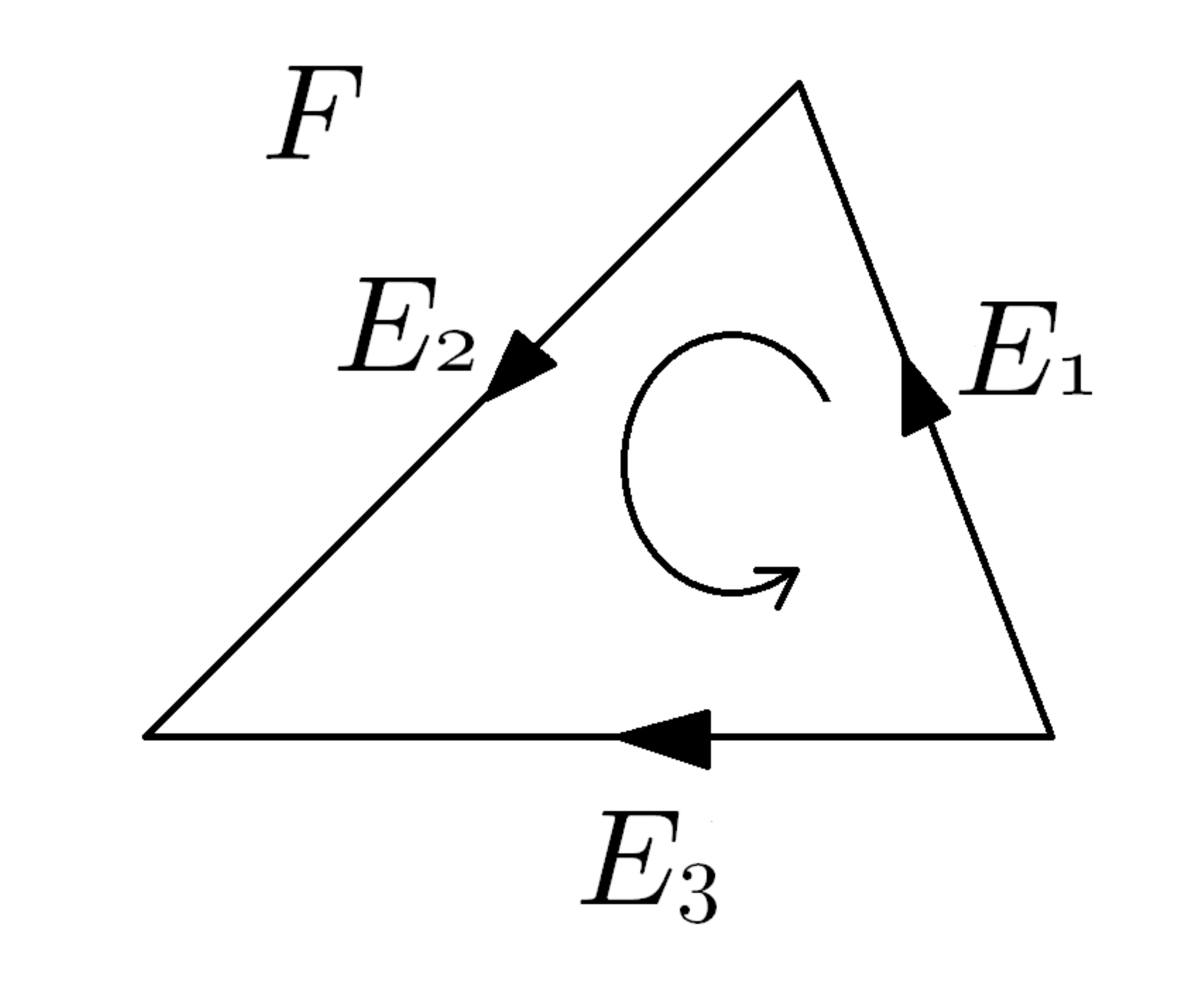}

\begin{small}
$\epsilon_1 = 1, \epsilon_2 = 1, \epsilon_3 = -1.$
\end{small}
\caption{The sign of edges.}
\end{figure}

Fix a generalized ideal triangulation $K$ of $M$.  
Give an orientation to each edge of $K$ such that 
for any 2-face $F$ of $K$, the orientations of the three edges of $F$ are not cyclic (as the left hand side of Figure 2).  
We call such a choice of the orientations of edges of $K$ {\it a local order of $K$} (or {\it a branching of $K$}).
Then each tetrahedron $\sigma$ of $K$ has one of each vertex incident to $i$ outgoing edges of $\sigma$ and 
to $(3-i)$ incoming edges of $\sigma$ for $i = 0,1,2,3$ (as the right hand side of Figure 2).  
Let $v_i$ be the vertex of $\sigma$ incident to $i$ outgoing edges of $\sigma$.  
Then the order $v_0 < v_1 < v_2 < v_3$ of the vertices of $\sigma$ settles an orientation of $\sigma$.  
We define the sign $\epsilon_{\sigma}$ of $\sigma$ as follows:
\[
 \epsilon_{\sigma} = 
  \begin{cases}
   1 & {\rm the \:orientation \:of} \:\sigma {\rm \:by \:the \:local \:order \:agrees \:with \:that \:of}\:M\\
   -1 & {\rm otherwise.}
  \end{cases}
\]  

\begin{figure}
\centering
\includegraphics[width=3.4cm]{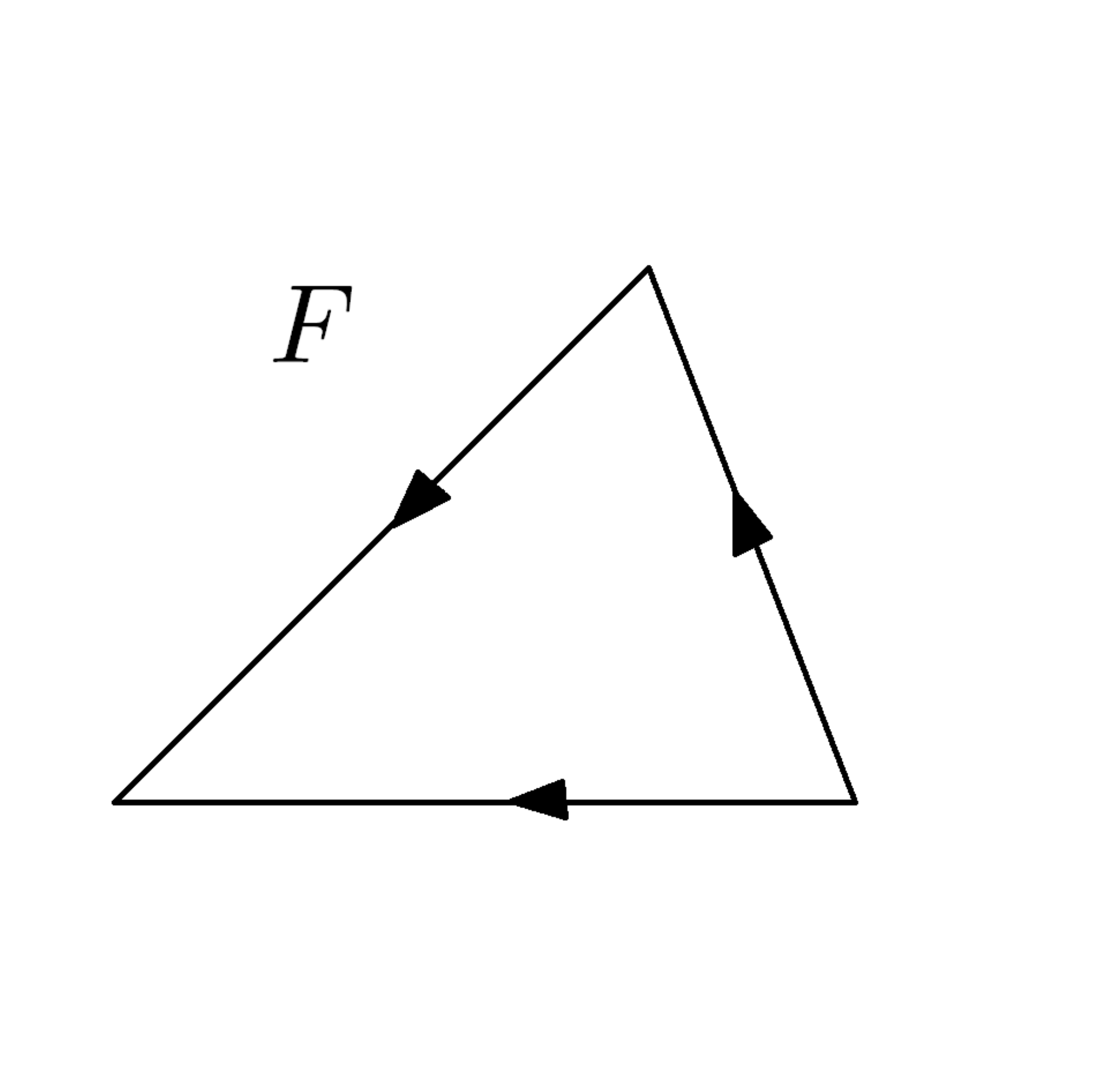} \hspace{1.2cm} \includegraphics[width=2.4cm]{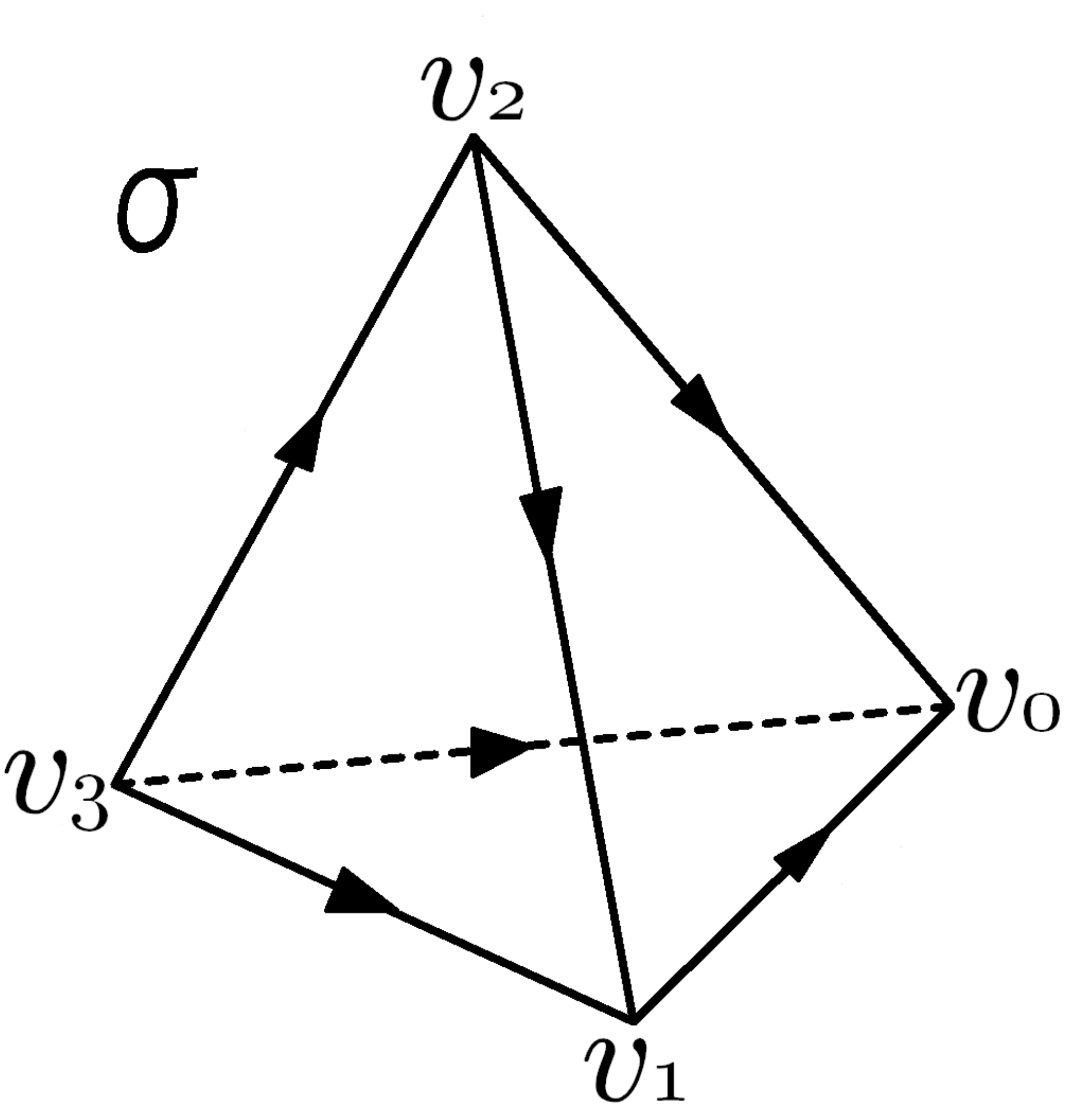}
\caption{A local order for a face and for a tetrahedron.}
\end{figure} 


Now we define the generalized DW invariant.  
Let $M$ be a compact or cusped 3-manifold,  $G$ a finite group and $\alpha \in Z^3(G,U(1))$.  
Fix a generalized ideal triangulation $K$ of $M$ with a local order.  
Then for each tetrahedron $\sigma$ of $K$ the sign $\epsilon_{\sigma}$ is determined by the local order.  
Put a coloring $\varphi$ of $K$, and then some element $\varphi(E)$ of $G$ is assigned to each oriented edge $E$ of each tetrahedron $\sigma$.  
We call $\varphi(E)$ the color of $E$ and such a tetrahedron $\sigma$ the colored tetrahedron, denoted by $(\sigma, \varphi)$. 

Let $v_0, v_1, v_2, v_3$ be the vertices of $\sigma$ with $v_0 < v_1 < v_2 < v_3$ by the local order ($v_i$ is incident to $i$ outgoing edges of $\sigma$).  
Put $\varphi(\langle v_0 v_1\rangle) = g$, $\varphi(\langle v_1 v_2\rangle) = h$, $\varphi(\langle v_2 v_3\rangle) = k$.  
Correspond $\alpha(g,h,k)^{\epsilon_{\sigma}} \in U(1)$ to the colored tetrahedron $(\sigma, \varphi)$.  
We call $W(\sigma, \varphi) = \alpha(g,h,k)^{\epsilon_{\sigma}}$ the symbol of the colored tetrahedron $(\sigma, \varphi)$.

\begin{figure}
\centering
\includegraphics[width=3cm]{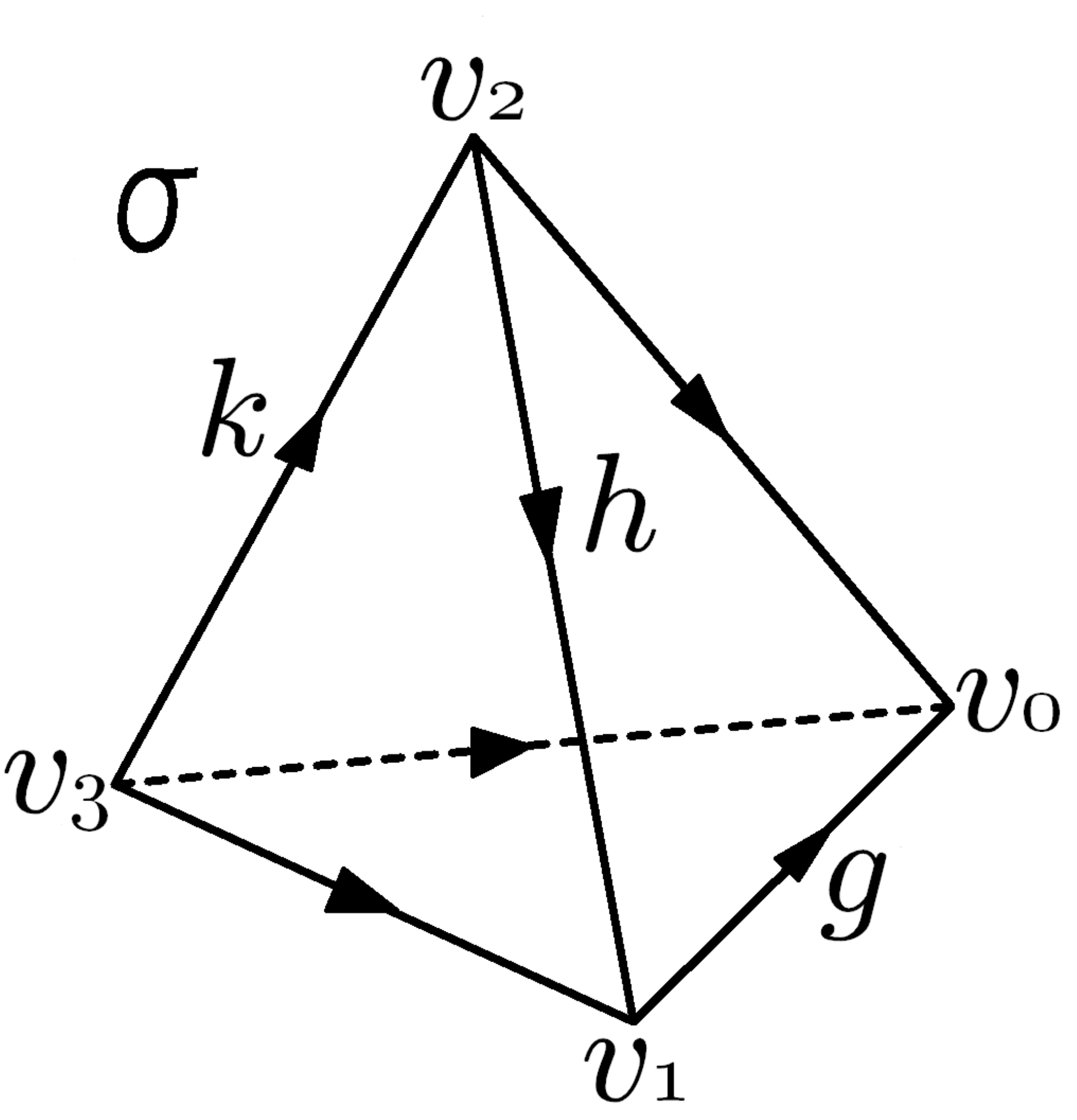}
\caption{A colored tetrahedron.}
\end{figure}

\begin{thm}
Let $M$ be a compact or cusped 3-manifold,  $G$ a finite group and $\alpha \in Z^3(G,U(1))$.  
Let $K$ be a generalized ideal triangulation of $M$ with a local order.  
Let $\sigma_1, \ldots ,\sigma_n$ be the tetrahedra of $K$ and $a$ the number of the interior vertices of $K$.
The generalized Dijkgraaf-Witten invariant $Z(M)$ is defined as follows:
$$
Z(M) = \frac{1}{|G|^a}\sum_{\varphi\in {\rm Col}(K)} \prod_{i=1}^{n} W(\sigma_i, \varphi).
$$
Then $Z(M)$ is independent of the choice of a generalized ideal triangulation $K$ of $M$ with a local order.
\end{thm}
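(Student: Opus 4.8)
The plan is to prove invariance by reducing the statement to a finite list of local moves that connect any two generalized ideal triangulations of $M$ carrying local orders, and then checking that $Z(M)$ is unchanged under each such move. By the theory of singular triangulations (Matveev \cite{Matveev}, Turaev \cite{Turaev}, and in the ideal/cusped setting Benedetti--Petronio \cite{Benedetti1}), any two generalized ideal triangulations of the same $M$ are related by a finite sequence of the 2--3 Pachner move and its inverse, the 1--4 move introducing an interior vertex and its inverse, and moves that alter the local order while fixing the underlying triangulation. So first I would fix such a generating set of moves and reduce the theorem to the corresponding invariance statements, treating each kind of move separately.

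The core is invariance under the 2--3 move, which replaces two tetrahedra meeting along a 2-face by three tetrahedra sharing a new interior edge. Here no vertex is created, so $a$ is unchanged and only the weighted sum $\sum_{\varphi}\prod_i W(\sigma_i,\varphi)$ must be compared. I would first exhibit a bijection between the colorings on the two sides: a coloring of the two-tetrahedron configuration fixes all edge colors except that of the new interior edge, whose color is then forced by the coloring condition $\varphi(E_3)^{\epsilon_3}\varphi(E_2)^{\epsilon_2}\varphi(E_1)^{\epsilon_1}=1$ around the new faces. Under this bijection, writing the relevant edge colors as $g,h,k,l \in G$ according to the local order, the product of the two symbols on one side equals the product of the three symbols on the other exactly when $\alpha(h,k,l)\,\alpha(g,hk,l)\,\alpha(g,h,k)=\alpha(gh,k,l)\,\alpha(g,h,kl)$; that is, 2--3 invariance is literally the 3-cocycle condition recalled above. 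The remaining work is to verify that the vertex orderings and the signs $\epsilon_{\sigma}$ induced by the local order place the five factors on the correct sides with the correct exponents.

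Next I would treat the 1--4 move, which subdivides one tetrahedron by adding a single interior vertex $v$ and so increases $a$ by one. Fixing the colors of the four original edges, the colors of the four new edges incident to $v$ are constrained by the coloring condition on the new faces to a single free parameter ranging over $G$. Summing the product of the four new symbols over this parameter and applying the cocycle condition repeatedly collapses it to $|G|$ times the single symbol $\alpha(g,h,k)^{\epsilon_\sigma}$ of the original tetrahedron; this extra factor $|G|$ is precisely cancelled by the change of normalization from $|G|^{-a}$ to $|G|^{-(a+1)}$. This step is where the prefactor $|G|^{-a}$ is forced, and where the normalization of $\alpha$ is used to discard the degenerate contributions.

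Finally, independence of the local order for a fixed triangulation I would establish by showing the sum is unchanged when a single admissible edge orientation is reversed, re-expressing the affected symbols through the cocycle condition and the normalization of $\alpha$, and then iterating to connect any two local orders on a given $K$. I expect the main obstacle to be twofold. First, confirming that the chosen move set is genuinely complete for triangulations carrying both interior and ideal vertices together with compatible local orders, so that every move can be performed with branchings matching on overlaps. Second, the sign bookkeeping in the 2--3 move: each of the five tetrahedra may carry either sign $\epsilon_\sigma = \pm 1$, and several branching patterns of the local configuration occur, so one must check in every case that the five factors assemble into the cocycle identity with the correct exponents.
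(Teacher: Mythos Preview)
Your treatment of the $(2,3)$ and $(1,4)$ Pachner moves is correct and coincides with the paper's Lemmas~3.3 and~3.4. The gap is in the local-order step, and in the closely related problem of carrying a branching through a Pachner sequence.

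For local-order independence you propose to reverse a single edge and iterate. But a local order is a global acyclicity condition on every $2$-face, so reversing an arbitrary edge usually destroys it; and even restricting to ``admissible'' reversals that preserve the branching, you would need that any two branchings on the same underlying $\check K$ are connected by a chain of such single reversals. For singular triangulations this is not clear, and the paper does not attempt it. Instead it proves directly that $Z(K)=Z(K^b)$, where $K^b$ is the barycentric subdivision of $\check K$ equipped with the canonical order (old vertex $<$ edge midpoint $<$ face center $<$ tetrahedron center). Since $K^b$ with this order depends only on $\check K$, order-independence follows at once. The equality $Z(K)=Z(K^b)$ is established in three steps, one for each stage of the barycentric subdivision: the first is exactly the $(1,4)$ computation, but in the second and third an ``extra factor'' survives for a single tetrahedron and cancels only after pairing with the neighbouring tetrahedron across a shared face (Step~2) or with all tetrahedra around a shared edge (Step~3). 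This pairing-and-cancellation is the substantive content that a single edge-flip argument does not supply.

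The same device handles your first anticipated obstacle. A Pachner sequence between two generalized ideal triangulations need not pass through triangulations admitting any branching, so one cannot simply run the $(1,4)$/$(2,3)$ checks along it. The paper replaces $K$ and $L$ by iterated barycentric subdivisions $K^p$, $L^q$ large enough that a Pachner sequence exists through generalized ideal \emph{simplicial} triangulations; on those a total vertex order always furnishes a local order, and the per-move verifications then go through. You flag this issue but do not provide a mechanism to resolve it; the barycentric-subdivision trick is that mechanism.
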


By using a generalized ideal triangulation $K$ of $M$, each component of $\partial M$ corresponds to an ideal vertex of $K$.  
Hence, even if $\partial M \neq \emptyset$, the generalized DW invariant of $M$ does not need a triangulation of $\partial M$ nor its coloring.  
For a closed 3-manifold $M$,  since $K$ has no ideal vertices, 
the generalized DW invariant of $M$ is no other than the original DW invariant of $M$.

\begin{rem}
{\rm In general some generalized ideal triangulation} $K$ {\rm of} $M$ {\rm does not admit a local order}.  
{\rm Nevertheless the following lemma holds}.
\end{rem}

\begin{lem}
Any compact or cusped 3-manifold $M$ has a generalized ideal triangulation which admits a local order.
\end{lem}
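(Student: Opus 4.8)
The plan is to start from any generalized ideal triangulation of $M$ and then repair the possible failure of branchability by passing to the barycentric subdivision, on which a canonical local order always exists. So first I would fix some generalized ideal triangulation $K$ of $M$. Such a $K$ exists: taking an ordinary triangulation of the compact manifold $M$ in which $\partial M$ is a subcomplex, one cones each boundary component $\Sigma_j$ to a new ideal vertex $w_j$, adjoining the cones from $w_j$ over the triangles of $\Sigma_j$. The link of $w_j$ is then $\Sigma_j$, so each boundary component converges at an ideal vertex, the original vertices of $M$ become interior vertices, and $K$ is a generalized ideal triangulation. (For a cusped $M$ one may instead take an honest ideal triangulation.)

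Next I would form the barycentric subdivision $K'$ of $K$. Recall that the vertices of $K'$ are the barycenters $\hat\sigma$ of the simplices $\sigma$ of $K$, and that the simplices of $K'$ are indexed by the flags $\sigma_0 \subsetneq \sigma_1 \subsetneq \cdots \subsetneq \sigma_k$ of simplices of $K$, the associated simplex of $K'$ being spanned by $\hat\sigma_0,\ldots,\hat\sigma_k$. I then define an orientation of the edges of $K'$ by assigning to each vertex $\hat\sigma$ the integer label $\dim\sigma \in \{0,1,2,3\}$ and orienting every edge so that this label strictly increases along it; equivalently, the edge joining $\hat\sigma$ and $\hat\tau$ with $\sigma \subsetneq \tau$ is oriented from the lower-dimensional barycenter to the higher-dimensional one.

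I then check that this is a local order. Every $2$-face of $K'$ is spanned by a flag $\sigma_0 \subsetneq \sigma_1 \subsetneq \sigma_2$, so its three vertices carry the strictly increasing labels $\dim\sigma_0 < \dim\sigma_1 < \dim\sigma_2$; in particular the three vertices and the three edges of the face are pairwise distinct, and the edge orientations realize the linear order $\hat\sigma_0 < \hat\sigma_1 < \hat\sigma_2$, which is not cyclic. Hence $K'$ admits a local order. I expect the main point to watch here — and the place where the \emph{singular} (Delta-complex) nature of $K$ enters — is that the dimension label remains a well-defined function on the vertices of $K'$ even in the presence of the self-identifications allowed by a singular triangulation (loop edges, faces with identified sides), and that it is strictly monotone along every oriented edge. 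This single monotone labeling certifies that no face can be cyclically oriented, so I never need $K'$ to be a genuine simplicial complex.

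Finally I would verify that $K'$ is again a generalized ideal triangulation of $M$. The ideal vertices of $K'$ are exactly the barycenters $\hat\sigma$ of the $0$-simplices $\sigma$ of $K$ that were ideal; every barycenter of a simplex of positive dimension is an interior point of $M$, since the interiors of the edges, faces and tetrahedra of a generalized ideal triangulation lie in the interior of $M$, and hence is a new interior vertex. Thus no new ideal vertices are created, and the link of each ideal vertex is simply the barycentric subdivision of its old link, namely the same boundary component. Therefore each boundary component of $M$ still converges at an ideal vertex of $K'$, and $K'$ is the required generalized ideal triangulation admitting a local order. The only genuine obstacle is the careful handling just described of the identifications in the singular triangulation; once the monotone dimension labeling is in hand, both the non-cyclicity of the faces and the preservation of the ideal structure are immediate.
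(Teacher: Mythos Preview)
Your argument is correct and in fact slightly more economical than the paper's. The paper subdivides \emph{twice}, obtaining a triangulation $K''$ in which any four vertices span at most one tetrahedron, and then chooses an arbitrary total order on the vertex set of $K''$ to induce a local order. You instead subdivide once and orient edges by the dimension label $\hat\sigma \mapsto \dim\sigma$; this is precisely the ``barycentric local order'' that the paper itself introduces later in the proof of Lemma~3.1, so your route bypasses the second subdivision and the appeal to near-simpliciality. One small caveat: your description of the simplices of $K'$ as flags $\sigma_0 \subsetneq \cdots \subsetneq \sigma_k$ is literally correct only when $K$ is simplicial; in a singular triangulation a cell $\sigma$ may sit inside $\tau$ via several distinct face maps, so the simplices of $K'$ are really indexed by chains of face inclusions rather than chains of cells. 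This does not damage your argument, since the only fact you use is that the three vertices of any $2$-face of $K'$ carry three distinct dimension labels---and that remains true because barycenters of cells of different dimensions lie in disjoint open strata of $M$.
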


\begin{proof}
For any given generalized ideal triangulation $K$ of $M$, let $K^{\prime \prime}$ be the generalized ideal triangulation of $M$ 
obtained by applying the barycentric subdivision twice to each tetrahedron of $K$.  
For given four vertices of $K^{\prime \prime}$ (which form a tetrahedron of $K^{\prime \prime}$), 
there exists a unique tetrahedron of $K^{\prime \prime}$ with the given four vertices.  
Hence $K^{\prime \prime}$ can be dealed in the same way as a simplicial triangulation of a closed 3-manifold.  
We choose an arbitrary total order on the set of the vertices of $K^{\prime \prime}$ and then the total order determines a local order of $K^{\prime \prime}$. 
\end{proof}

\section{Invariance of the generalized Dijkgraaf-Witten invariant}

In this section, we prove Theorem 2.2.  
First we show that $Z(M)$ is independent of the choice of a local order of a fixed generalized ideal triangulation $K$ of $M$.  
Then we prove that $Z(M)$ is independent of the choice of a generalized ideal triangulation $K$ of $M$.  

Let $K$ be a generalized ideal triangulation of $M$ with a local order.  
$\check{K}$ denotes the generalized ideal triangulation without considering a local order in this section.  
We define $Z(K)$ by
$$
Z(K) = \frac{1}{|G|^a}\sum_{\varphi\in {\rm Col}(K)} \prod_{i=1}^{n} W(\sigma_i, \varphi).
$$ 

\begin{lem}
Let $K_1$ and $K_2$ be generalized ideal triangulations with local orders of a compact or cusped 3-manifold $M$.  
If $\check{K_1} = \check{K_2}$, then $Z(K_1) = Z(K_2)$, 
i.e. $Z(K)$ is independent of the choice of a local order.
\end{lem}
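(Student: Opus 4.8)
The plan is to reduce the change of local order to a sequence of local moves and to absorb each move using the $3$-cocycle condition. The first observation is that, as noted just after the definition of a coloring, $\mathrm{Col}(K)$ depends only on $\check{K}$ and not on the chosen local order; thus the two sums defining $Z(K_1)$ and $Z(K_2)$ range over the same index set $\mathrm{Col}(\check{K})$. It therefore suffices to track how the product $\prod_{i=1}^{n} W(\sigma_i,\varphi)$ changes when the local order is altered, and to show that the change disappears. I would aim to prove the stronger pointwise statement that $\prod_i W(\sigma_i,\varphi)$ is itself unchanged for every fixed $\varphi$, which immediately gives $Z(K_1)=Z(K_2)$.

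Next I would isolate an elementary change of local order. The natural candidate is the reversal of the orientation of a single edge $e$, performed only when the result is again a local order; since reversing the edge joining the source and the sink of a $2$-face makes that face cyclic while reversing an edge at the middle vertex keeps it acyclic, the admissible reversals are exactly those at edges whose endpoints are order-consecutive in every tetrahedron containing $e$. I would then argue that any two local orders of $\check{K}$ are connected by a finite chain of such admissible reversals, staying inside the set of valid local orders throughout. Because a local order only constrains the $2$-faces (not arbitrary cycles in the $1$-skeleton), this connectivity is not a formal consequence of the standard theory of globally acyclic orientations and must be established by hand; I regard this as one of the two main obstacles.

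The heart of the argument is the effect of one admissible reversal on $\prod_i W(\sigma_i,\varphi)$. Only the tetrahedra in the star of $e$ are affected; in each such $\sigma$ the reversal transposes two order-consecutive vertices $v_j,v_{j+1}$, which flips $\epsilon_\sigma$ and replaces the evaluating triple $(g,h,k)=(\varphi(\langle v_0v_1\rangle),\varphi(\langle v_1v_2\rangle),\varphi(\langle v_2v_3\rangle))$ by the triple read off from the new order, whose entries are again expressible through $g,h,k$ by the coloring relations $\varphi(\langle v_0v_2\rangle)=gh$, $\varphi(\langle v_1v_3\rangle)=hk$, and $\varphi(\langle v_0v_3\rangle)=ghk$. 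Writing out $W(\sigma,\varphi)$ before and after, its ratio is a ratio of values of $\alpha$, and I would simplify it with the cocycle identity $\alpha(h,k,l)\alpha(g,hk,l)\alpha(g,h,k)=\alpha(gh,k,l)\alpha(g,h,kl)$ together with normalization. The point is that the extra $\alpha$-factors created on the common $2$-faces of neighbouring tetrahedra in the star of $e$ appear twice with opposite exponents and hence telescope, leaving $\prod_i W(\sigma_i,\varphi)$ invariant.

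I expect the cocycle telescoping of the previous paragraph to be the principal technical difficulty: one must set up the bookkeeping around $e$ so that, for each of the three possible positions $j=0,1,2$ of the transposed pair and for each orientation of the tetrahedra of the star relative to $M$, the signs $\epsilon_\sigma$ and the induced reorderings match up and the surplus factors cancel exactly via $\delta\alpha=1$. The guiding principle throughout is that, for fixed $\varphi$, the number $\prod_i W(\sigma_i,\varphi)$ is really the evaluation of the simplicial $3$-cocycle determined by $\alpha$ and $\varphi$ against the (relative) fundamental cycle carried by $K$; changing the local order only changes this simplicial cocycle by a coboundary, which pairs trivially with a cycle. Making that principle precise at the combinatorial level is exactly what the cocycle computation above accomplishes.
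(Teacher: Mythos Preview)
Your route is genuinely different from the paper's. The paper never tries to walk from one local order on $\check K$ to another. Instead it introduces a canonical intermediary: the barycentric subdivision $K^{b}$ of $K$, equipped with the ``barycentric local order'' (original vertex $<$ edge midpoint $<$ face center $<$ tetrahedron center). This locally ordered triangulation depends only on $\check K$. The paper then proves $Z(K)=Z(K^{b})$ by three explicit subdivision steps, each reduced to the cocycle condition; since $K_1^{b}=K_2^{b}$ whenever $\check K_1=\check K_2$, one gets $Z(K_1)=Z(K_1^{b})=Z(K_2^{b})=Z(K_2)$. No comparison of two local orders on the same triangulation is ever made.

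The principal gap in your plan is precisely the step you flag as an obstacle: the claim that any two local orders on a fixed (singular, possibly ideal) triangulation are connected by a chain of single--edge reversals each of which is again a local order. For simplicial triangulations this follows from adjacent transpositions in a total vertex order, but the triangulations here are $\Delta$--complexes, and I do not know of a result guaranteeing such connectivity for branchings in that generality; you would have to prove it. The paper's barycentric device sidesteps this entirely. A secondary issue is that your telescoping needs more care than indicated: the ``extra factor'' a tetrahedron contributes to a face $F\ni e$ depends on the position $j\in\{0,1,2\}$ of $e$ in that tetrahedron, and the two tetrahedra sharing $F$ need not have $e$ in the same position, so one must check (case by case, or via a uniform cochain identity) that the two face contributions really coincide.

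Your final paragraph is essentially a second, independent proof sketch: for fixed $\varphi$, the product $\prod_i W(\sigma_i,\varphi)$ is the evaluation of a simplicial cocycle on the relative fundamental cycle, and a change of local order alters that cocycle by a coboundary. That principle is correct and, once made precise for $\Delta$--complexes with ideal vertices, gives the lemma directly without any connectivity statement; but it is a different argument from the edge--flip combinatorics that forms the body of your proposal.
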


\begin{proof}
Let $K$ be a generalized ideal triangulation of $M$ with a local order.  
Let $K^b$ be the generalized ideal triangulation of $M$ obtained by applying the barycentric subdivision once to each tetrahedron of $K$ 
with the following local order:

\noindent (vertex of $K$)  $<$ (midpoint of an edge of $K$)  $<$ (center of a face of $K$) 

\hspace{2cm} $<$ (center of a tetrahedron of $K$).

\noindent We call the above local order {\it the barycentric local order} in this paper.  
We prove that $Z(K) = Z(K^b)$, which implies the independence of the choice of a local order.  
We prove this claim by the following three steps. 

Step 1 : Divide each tetrahedron $\sigma$ of $K$ into four tetrahedra 
by adding four edges connecting the center of $\sigma$ (denoted by $b$) and (four) vertices of $\sigma$. 
This division is the number of the tetrahedra of $K$ times of (1,4)-Pachner moves.  See Figure 4.  
$K\rq$ denotes the generalized ideal triangulation of $M$ obtained by Step 1 with the local order 

\noindent (vertex of $K$) $<$ (center of a tetrahedron of $K$).

Step 2 : Divide each tetrahedron $\sigma$ of $K\rq$ into three tetrahedra by adding three edges as follows.  
$\sigma$ has three vertices of $K$ (the other vertex of $\sigma$ is $b$).  
Let $F$ be the face of $\sigma$ with three vertices of $K$.  
Add three edges connecting the center of $F$ (denoted by $c$) and (three) vertices of $F$.  See Figure 5.  
$K^{\prime \prime}$ denotes the generalized ideal triangulation of $M$ obtained by Step 2 with the local order 

\noindent (vertex of $K$) $<$ (center of a face of $K$) 

\hspace{2cm} $<$ (center of a tetrahedron of $K$).

Step 3 : Divide each tetrahedron $\sigma$ of $K^{\prime \prime}$ into two tetrahedra as follows.  
Let $v_0$, $v_1$ be two vertices of $\sigma$ which are vertices of $K$ (the other vertices of $\sigma$ are $b$ and $c$).  
Let $E$ be the edge of $\sigma$ connecting $v_0$ and $v_1$, and $d$ the midpoint of $E$.  
Divide $\sigma = \langle v_0v_1cb\rangle$ into $\langle v_0dcb\rangle$ and $\langle v_1dcb\rangle$.  See Figure 6.  
The generalized ideal triangulation of $M$ obtained by Step 3 is $\check{K^b}$.

Hence it suffices to show that $Z(K) = Z(K\rq) = Z(K^{\prime \prime}) = Z(K^b)$.  
We show these equalities in the above three steps.

\begin{figure}
\centering
\includegraphics[width=3.5cm]{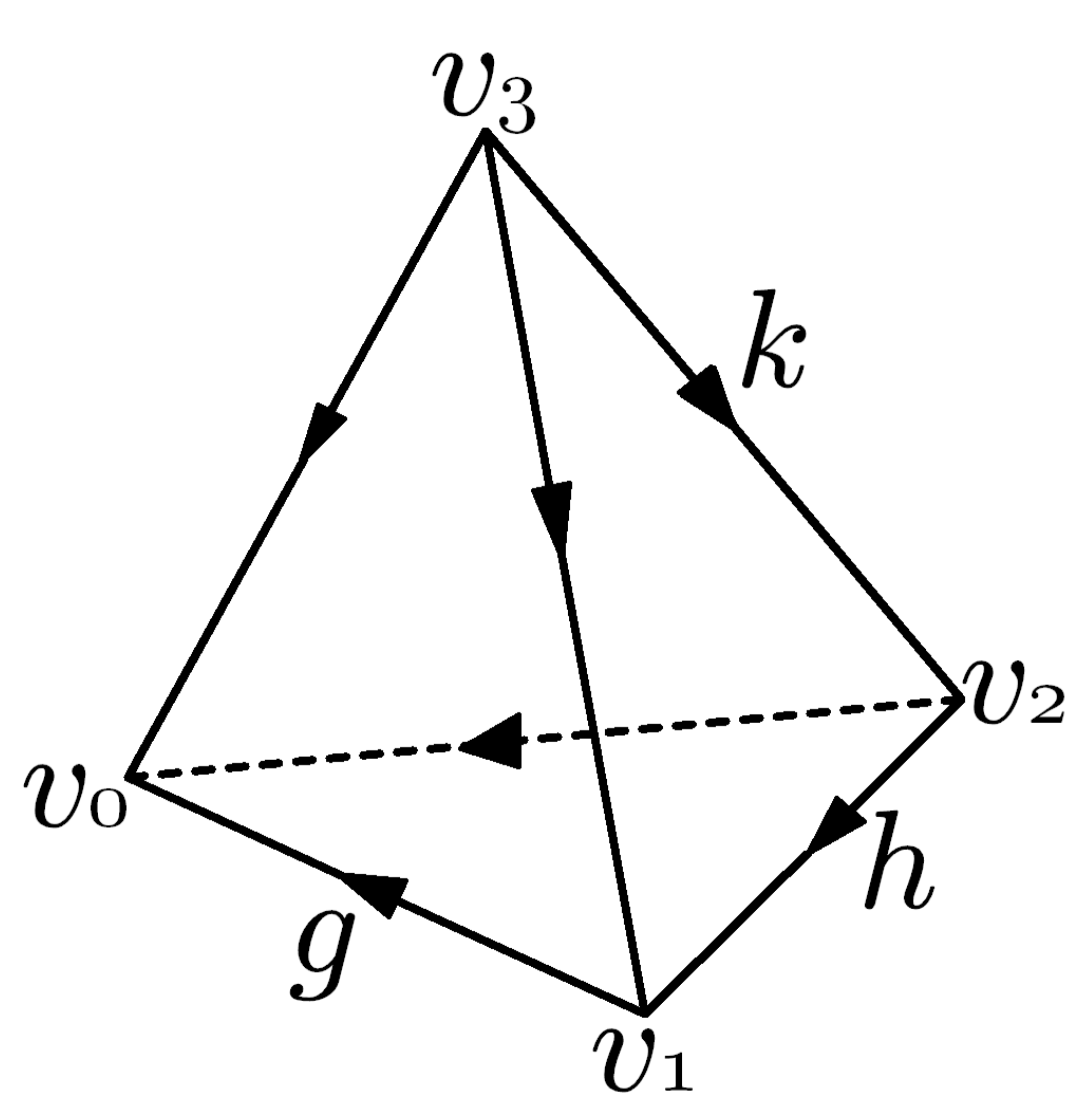} 
\includegraphics[width=1.2cm]{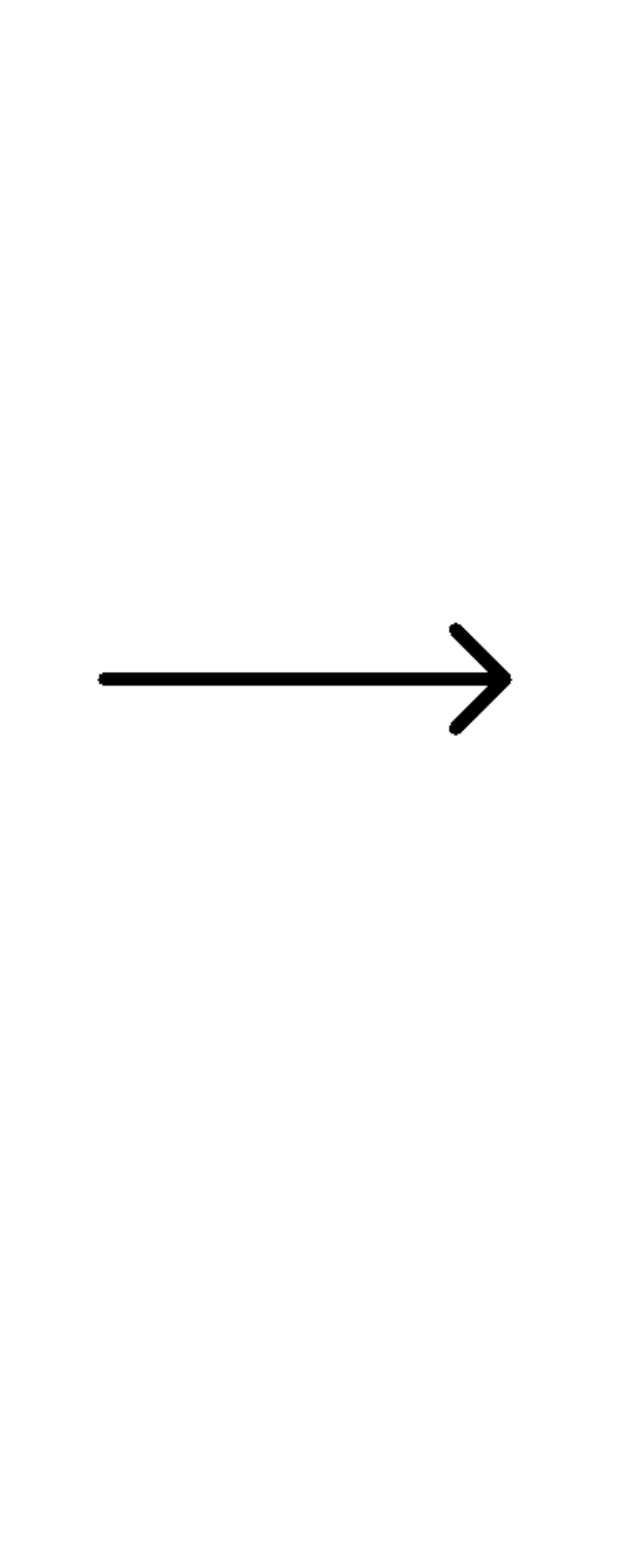}
\includegraphics[width=3.5cm]{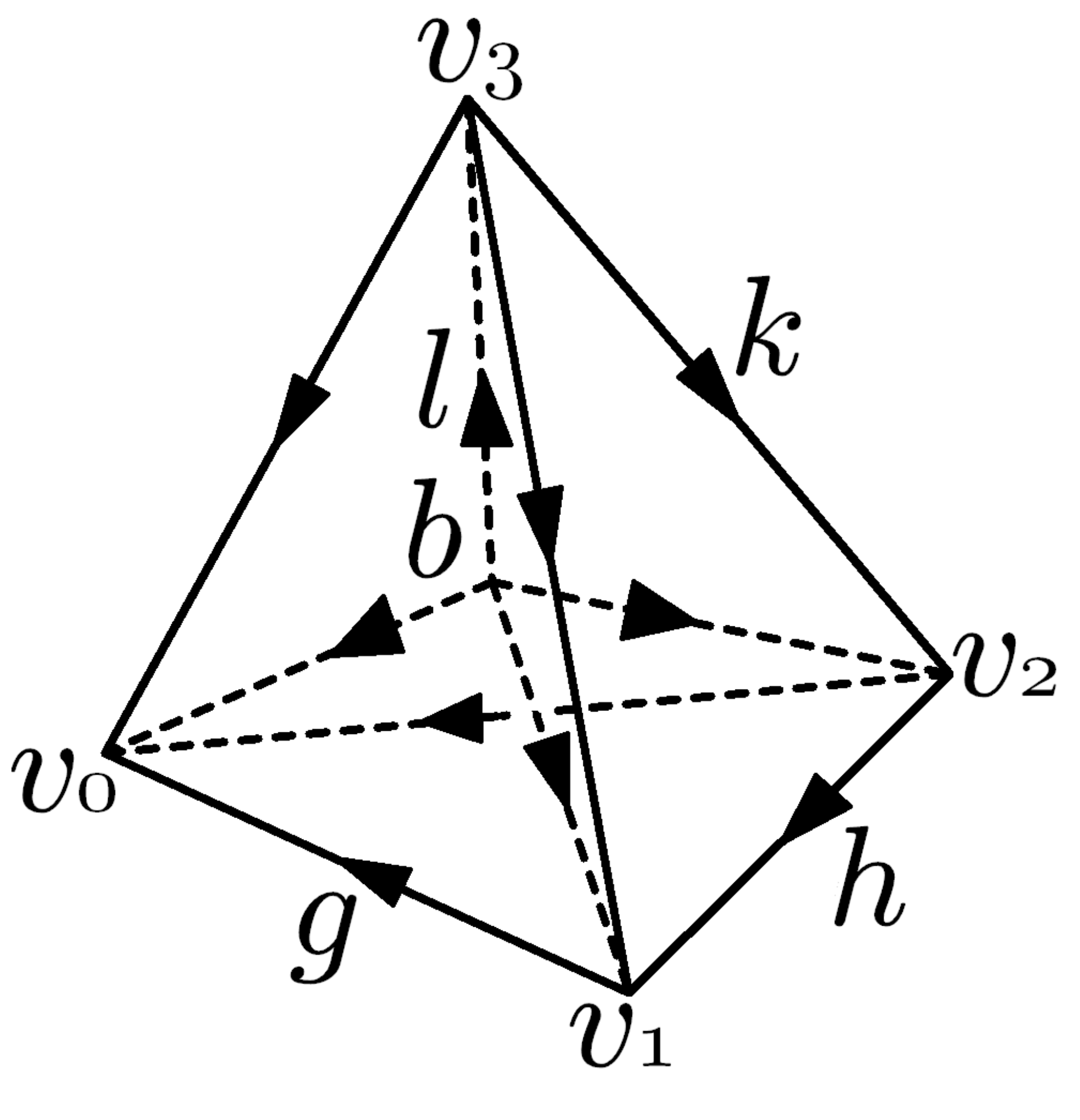}
\caption{The division in Step 1 ((1,4)-Pachner move).}
\end{figure}

Step 1: Set $\sigma = \langle v_0 v_1 v_2 v_3\rangle$, $\sigma_0 = \langle v_1 v_2 v_3 b\rangle$, $\sigma_1 = \langle v_0 v_2 v_3 b\rangle$, 
$\sigma_2 = \langle v_0 v_1 v_3 b\rangle$ and $\sigma_3 = \langle v_0 v_1 v_3 b\rangle$.  
Set $\epsilon_i = \epsilon_{\sigma_i}$.  Then $\epsilon_{\sigma} = -\epsilon_0 = \epsilon_1 = -\epsilon_2 = \epsilon_3$.  
The restriction of $\varphi \in {\rm Col}(K)$ on $\sigma$ is also denoted by $\varphi$ 
and Col($\sigma$) denotes the set of the restrictions of the colorings of $K$ on $\sigma$.  
$${\rm Col}(\sigma) \times G \ni (\varphi, l) \mapsto \varphi_l \in {\rm Col}(\sigma_0 \cup \sigma_1 \cup \sigma_2 \cup \sigma_3)$$
is a bijection, where $\varphi_l$ is the coloring determined by the following:
\[
 \varphi_l (E) = 
  \begin{cases}
   \varphi (E) & E {\rm \:is \:an \:edge \:of}\:K\\
   l & E = \langle v_3b\rangle.
  \end{cases}
\]

\noindent Therefore, it suffices to prove that for any $\varphi \in {\rm Col}(\sigma)$,
$$W(\sigma, \varphi) = \frac{1}{|G|}\sum_{l \in G} \prod_{i=0}^{3} W(\sigma_i, \varphi_l).$$

\noindent Set $\varphi(\langle v_0 v_1\rangle) = g$, $\varphi(\langle v_1 v_2\rangle) = h$, $\varphi(\langle v_2 v_3\rangle) = k$.  
The right hand side of the above formula equals to
$$\frac{1}{|G|}\sum_{l \in G} \alpha(h,k,l)^{-\epsilon_{\sigma}} \alpha(gh,k,l)^{\epsilon_{\sigma}} \alpha(g,hk,l)^{-\epsilon_{\sigma}} \alpha(g,h,kl)^{\epsilon_{\sigma}}$$
by the cocycle condition for ($g,h,k,l$),  
$$= \frac{1}{|G|}\sum_{l \in G} \alpha(g,h,k)^{\epsilon_{\sigma}} \hspace{7cm}$$
$$= \alpha(g,h,k)^{\epsilon_{\sigma}} = W(\sigma, \varphi).\hspace{6.2cm}$$

\noindent As this equality holds for each tetrahedron $\sigma$ of $K$, $Z(K) = Z(K\rq)$ holds.

Step 2: Set $\sigma = \langle v_0 v_1 v_2 b\rangle$, $\sigma_0 = \langle v_1 v_2 c b\rangle$, $\sigma_1 = \langle v_0 v_2 c b\rangle$, 
$\sigma_2 = \langle v_0 v_1 c b\rangle$.  
$\epsilon_{\sigma} = \epsilon_0 = -\epsilon_1 = \epsilon_2$.  
$${\rm Col}(\sigma) \times G \ni (\varphi, l) \mapsto \varphi_l \in {\rm Col}(\sigma_0 \cup \sigma_1 \cup \sigma_2)$$
is a bijection, where $\varphi_l$ is the coloring determined by the following:
\[
 \varphi_l (E) = 
  \begin{cases}
   \varphi (E) & E {\rm \:is \:an \:edge \:of}\:K\rq\\
   l & E = \langle c b\rangle.
  \end{cases}
\]

\noindent In the same way as Step 1, for any $\varphi \in {\rm Col}(\sigma)$, we compare 
$$W(\sigma, \varphi) \quad{\rm and}\quad \frac{1}{|G|}\sum_{l \in G} \prod_{i=0}^{2} W(\sigma_i, \varphi_l).$$

\noindent Set $\varphi(\langle v_0 v_1\rangle) = g$, $\varphi(\langle v_1 v_2\rangle) = h$, $\varphi(\langle v_2 b\rangle) = k$.  The right hand side equals to
$$\frac{1}{|G|}\sum_{l \in G} \alpha(h,kl^{-1},l)^{\epsilon_{\sigma}} \alpha(gh,kl^{-1},l)^{-\epsilon_{\sigma}} \alpha(g,hkl^{-1},l)^{\epsilon_{\sigma}}.$$

\noindent By the cocycle condition for ($g,h,kl^{-1},l$),
$$\alpha (h,kl^{-1},l) \alpha (gh,kl^{-1},l)^{-1} \alpha (g,hkl^{-1},l) = \alpha (g,h,k) \alpha (g,h,kl^{-1})^{-1}.$$

\begin{figure}
\centering
\includegraphics[width=3.5cm]{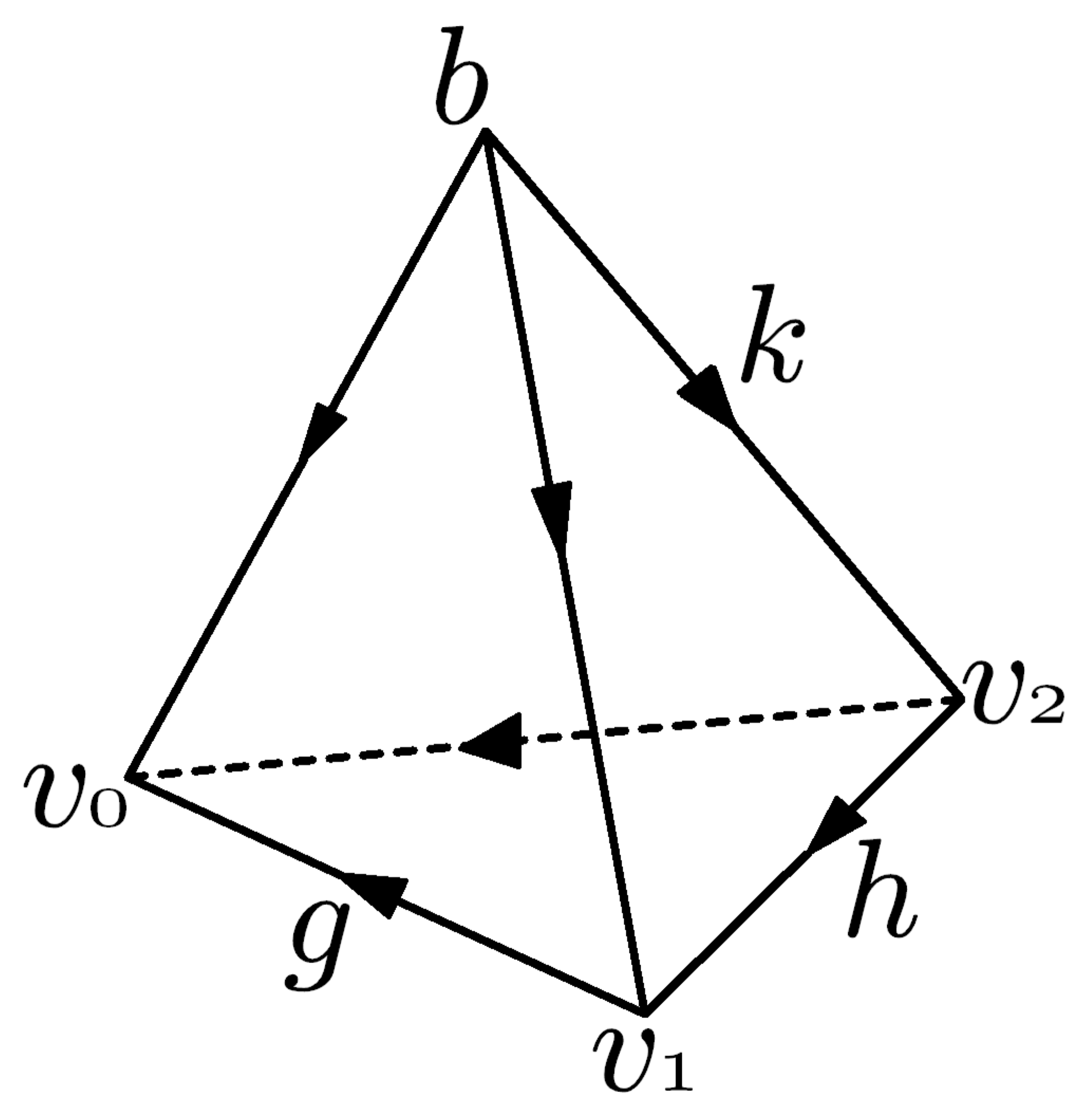} 
\includegraphics[width=1.2cm]{arrow1a3.pdf}
\includegraphics[width=3.5cm]{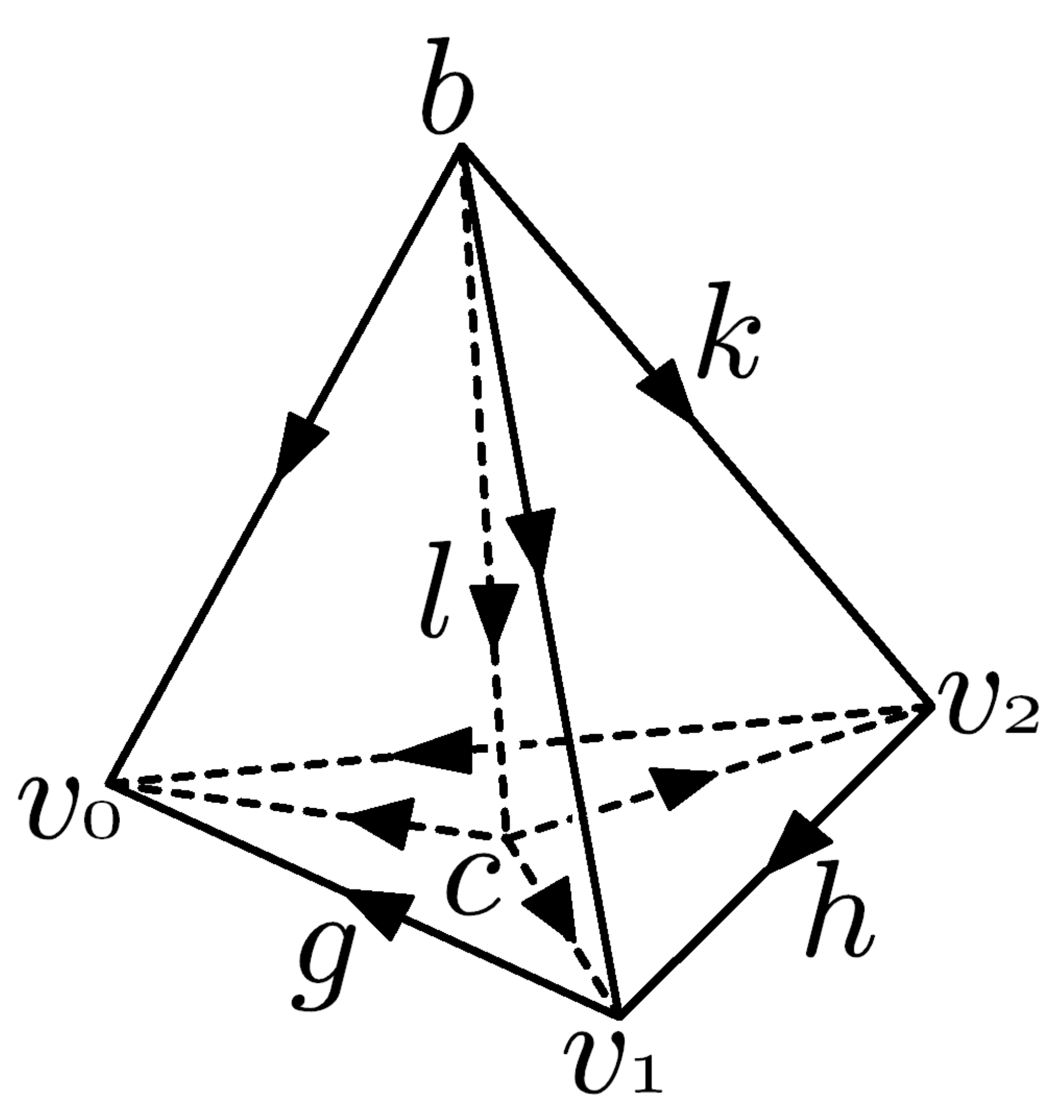}
\caption{The division in Step 2.}
\end{figure}

\noindent As $W(\sigma, \varphi) = \alpha (g,h,k)^{\epsilon_{\sigma}}$, 
the equality does not hold as that in Step 1 does because of the extra facor $\alpha (g,h,kl^{-1})^{-1}$.  
$\sigma\rq$ denotes the other tetrahedron which shares the face $\langle v_0 v_1 v_2\rangle$ and the other vertex of $\sigma\rq$ is denoted by $b\rq$. 
($b\rq$ is the center of some tetrahedron of $K$ and $b\rq$ may coincide with $b$.)  
The three tetrahedra obtained by the division of $\sigma\rq$ are denoted by $\sigma\rq_0, \sigma\rq_1$ and $\sigma\rq_2$.  
Consider the symbol of $\sigma\rq$ and the product of the symbols of $\sigma\rq_0, \sigma\rq_1$ and $\sigma\rq_2$, 
and then the same extra factor $\alpha (g,h,kl^{-1})^{-1}$ appears. ($kl^{-1}$ is the color of $\langle v_2 c\rangle$.)  
Since $\epsilon_{\sigma} = -\epsilon_{\sigma\rq}$, the extra factor of $\sigma$ and that of $\sigma\rq$ are exactly cancaled out.  Therefore
$$W(\sigma, \varphi) W(\sigma\rq, \varphi) = \frac{1}{|G|}\sum_{l \in G} \prod_{i=0}^{2} W(\sigma_i, \varphi_l) W(\sigma\rq_i, \varphi_l).$$
As this equality holds for each pair of tetrahedra $\sigma$ and $\sigma\rq$ of $K\rq$ which shares a face of $K$, 
$Z(K\rq) = Z(K^{\prime \prime})$ holds.

\begin{figure}
\centering
\includegraphics[width=3.5cm]{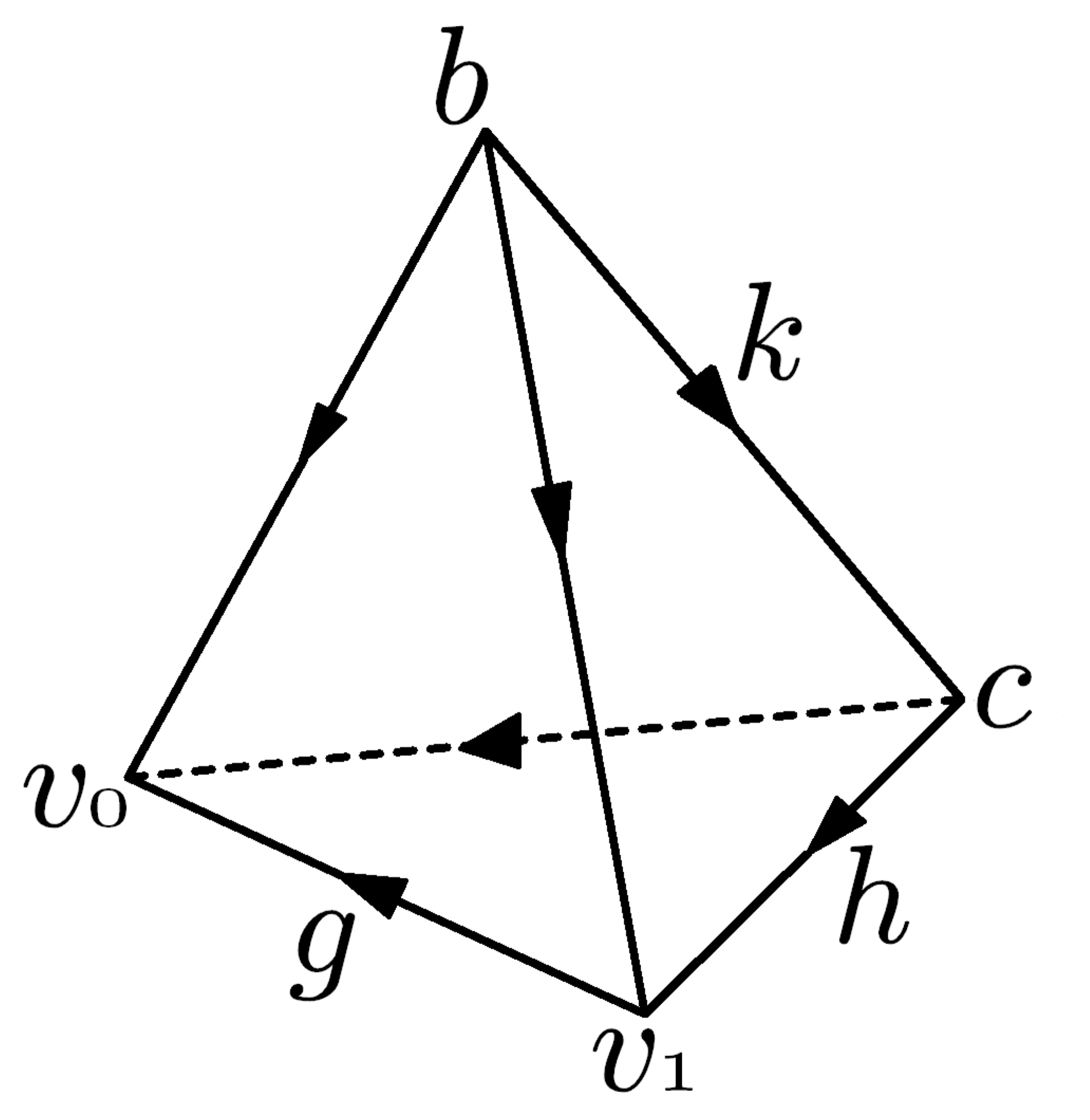} 
\includegraphics[width=1.2cm]{arrow1a3.pdf}
\includegraphics[width=3.5cm]{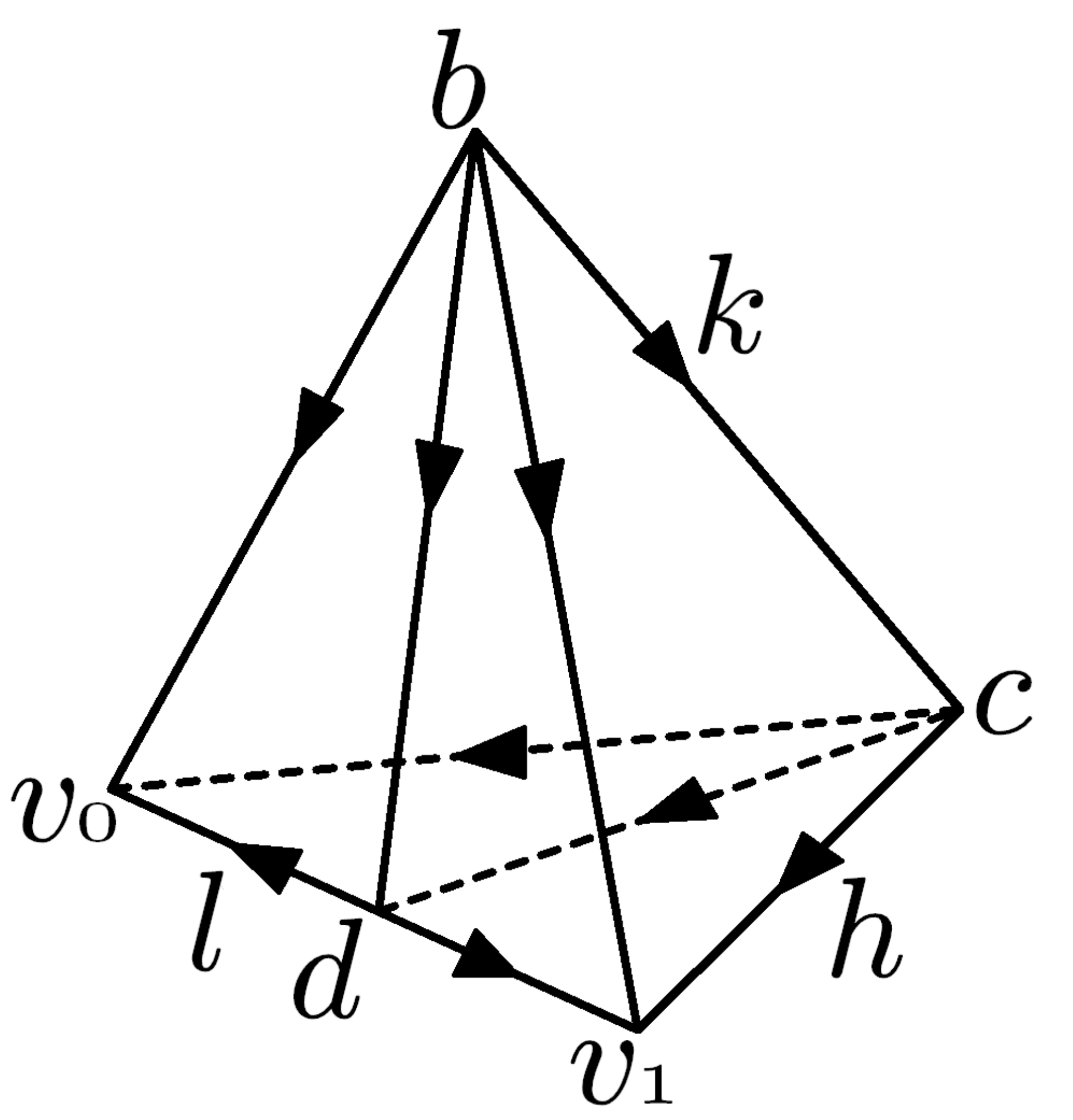}
\caption{The division in Step 3.}
\end{figure}

Step 3: Set $\sigma = \langle v_0 v_1 c b\rangle$, $\sigma_0 = \langle v_0 d c b\rangle$, $\sigma_1 = \langle v_1 d c b\rangle$.  
$\epsilon_{\sigma} = \epsilon_0 = -\epsilon_1$.  
$${\rm Col}(\sigma) \times G \ni (\varphi, l) \mapsto \varphi_l \in {\rm Col}(\sigma_0 \cup \sigma_1)$$
is a bijection, where $\varphi_l$ is the coloring determined by the following:
\[
 \varphi_l (E) = 
  \begin{cases}
   \varphi (E) & E {\rm \:is \:an \:edge \:of}\:K^{\prime \prime}\\
   l & E = \langle v_0 d\rangle.
  \end{cases}
\]

\noindent For any $\varphi \in {\rm Col}(\sigma)$, we compare 
$$W(\sigma, \varphi) \quad{\rm and}\quad \frac{1}{|G|}\sum_{l \in G} \prod_{i=0}^{1} W(\sigma_i, \varphi_l).$$

\noindent Set $\varphi(\langle v_0 v_1\rangle) = g$, $\varphi(\langle v_1 c\rangle) = h$, $\varphi(\langle c b\rangle) = k$. The right hand side equals to
$$\frac{1}{|G|}\sum_{l \in G} \alpha(l,l^{-1}gh,k)^{\epsilon_{\sigma}} \alpha(g^{-1}l,l^{-1}gh,k)^{-\epsilon_{\sigma}}.$$

\noindent By the cocycle condition for ($l,l^{-1}g,h,k$) and ($g^{-1}l,l^{-1}g,h,k$),
$$\alpha (l^{-1}g,h,k) \alpha (l,l^{-1}gh,k) = \alpha (g,h,k) \alpha (l,l^{-1}g,hk) \alpha (l,l^{-1}g,h)^{-1},$$
$$\alpha(l^{-1}g,h,k) \alpha(g^{-1}l,l^{-1}gh,k) = \alpha(g^{-1}l,l^{-1}g,hk) \alpha(g^{-1}l,l^{-1}g,h)^{-1}.$$
Hence, 
$$\alpha(l,l^{-1}gh,k) \alpha(g^{-1}l,l^{-1}gh,k)^{-1} =$$ 
$$\alpha (g,h,k) \alpha (l,l^{-1}g,hk) \alpha(g^{-1}l,l^{-1}g,hk)^{-1} \alpha (l,l^{-1}g,h)^{-1} \alpha(g^{-1}l,l^{-1}g,h).$$

\noindent Since $W(\sigma, \varphi) = \alpha (g,h,k)^{\epsilon_{\sigma}}$, the extra factor is 
$$\alpha (l,l^{-1}g,hk)\alpha(g^{-1}l,l^{-1}g,hk)^{-1}\alpha (l,l^{-1}g,h)^{-1}\alpha(g^{-1}l,l^{-1}g,h).$$ 
$\alpha (l,l^{-1}g,hk) \alpha(g^{-1}l,l^{-1}g,hk)^{-1}$ is regarded as 
the contribution of the face $\langle v_0 v_1 b\rangle$ since $hk$ is the color of $\langle v_1 b\rangle$.  
Namely, the extra factor of the pair ($\sigma$, $\langle v_0 v_1 b\rangle$) is $\alpha (l,l^{-1}g,hk) \alpha(g^{-1}l,l^{-1}g,hk)^{-1}$.  
Let $\sigma\rq$ be the other tetrahedron which shares the face $\langle v_0 v_1 b\rangle$ and the other vertex of $\sigma\rq$ is denoted by $b\rq$.  
As $b\rq$ is the center of some tetrahedron of $K$, the extra factor of ($\sigma\rq$, $\langle v_0 v_1 b\rangle$) 
is also $\alpha (l,l^{-1}g,hk) \alpha(g^{-1}l,l^{-1}g,hk)^{-1}$. 
Since $\epsilon_{\sigma} = -\epsilon_{\sigma\rq}$, the extra factors of ($\sigma$, $\langle v_0 v_1 b\rangle$) and 
($\sigma\rq$, $\langle v_0 v_1 b\rangle$) are canceled out.  
Similarly, $\alpha (l,l^{-1}g,h)^{-1} \alpha(g^{-1}l,l^{-1}g,h)$ is the extra factor of ($\sigma$, $\langle v_0 v_1 c\rangle$). 
Let $\sigma^{\prime \prime}$ be the other tetrahedron which shares the face $\langle v_0 v_1 c\rangle$ 
and the other vertex of $\sigma^{\prime \prime}$ is denoted by $c^{\prime \prime}$.   
As $c^{\prime \prime}$ is the center of some face of $K$, 
the extra factor of ($\sigma^{\prime \prime}$, $\langle v_0 v_1 c\rangle$) is $\alpha (l,l^{-1}g,h)^{-1} \alpha(g^{-1}l,l^{-1}g,h)$. 
Due to $\epsilon_{\sigma} = -\epsilon_{\sigma^{\prime \prime}}$, 
the extra factors of ($\sigma$, $\langle v_0 v_1 c\rangle$) and ($\sigma^{\prime \prime}$, $\langle v_0 v_1 c\rangle$) are canceled out.  
Let $\sigma^1, \ldots ,\sigma^m$ be the tetrahedra of $K^{\prime \prime}$ which shares the edge $\langle v_0 v_1\rangle$.  
The two tetrahedra obtained by the division of $\sigma^i$ are denoted by $\sigma^i_0$ and $\sigma^i_1$.  
Then,
$$\prod_{i=1}^{m} W(\sigma^i, \varphi) = \frac{1}{|G|}\sum_{l \in G} \prod_{i=1}^{m} \prod_{j=0}^{1} W(\sigma^i_j, \varphi_l).$$
As this equality holds for each set of tetrahedra of $K^{\prime \prime}$ which shares a edge of $K$, 
$Z(K^{\prime \prime}) = Z(K^b)$ holds.
\end{proof}

Next we prove that $Z(M)$ is independent of the choice of a generalized ideal triangulation $K$ of $M$.  
In order to show that, we make use of the following theorem by Pachner.
  
\begin{thm}[Pachner]
Any two triangulations of a 3-manifold $M$ can be transformed one to another by a finite sequence of the following two types of transformations shown in Figure 7. 
\end{thm}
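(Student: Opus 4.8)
Since the statement is the classical bistellar flip theorem of Pachner, refining the stellar subdivision theory of Alexander and Newman, my proposal is not to reprove it from first principles but to recall the architecture of the standard argument and indicate how it specializes to the (singular) triangulations used in this paper. The plan is to pass from two arbitrary triangulations to a common refinement, and then to translate the refinement data into the two moves of Figure 7, namely the $(2,3)$ move together with its inverse $(3,2)$ and the $(1,4)$ move together with its inverse $(4,1)$.

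First I would reduce the comparison of two triangulations $K_1$ and $K_2$ of $M$ to the case in which they admit a common subdivision. In the PL category this is standard subdivision theory: the identity map $M \to M$ can be made simplicial after iterated subdivision, so $K_1$ and $K_2$ share a common subdivision. For the singular triangulations in the sense of Matveev and Turaev that are used here, I would instead appeal to Matveev's theory of special spines, in which triangulations of $M$ correspond to spines and any two spines of $M$ are connected by a finite sequence of elementary moves.

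Next I would invoke the Alexander--Newman theorem, that two triangulations possessing a common subdivision are related by a finite sequence of stellar subdivisions and their inverses. This reduces the whole problem to expressing each stellar subdivision move as a finite composition of the two bistellar moves of Figure 7. At this point the normalization $1/|G|^a$ by the number of interior vertices becomes visible as the natural bookkeeping device, since the $(1,4)$ move is exactly the operation that changes the vertex count, matching the role it already played in Step 1 of Lemma 3.1.

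The hard part, and the genuine content of Pachner's contribution, is this final conversion: showing that every stellar subdivision is a finite composition of bistellar flips. This is carried out by induction on dimension combined with an explicit analysis of the link of the subdivided simplex; in dimension three it amounts to decomposing a stellar exchange into a controlled sequence of $(2,3)$ and $(1,4)$ moves. I would not grind through this combinatorial bookkeeping, since it is precisely what the cited work of Pachner supplies; for the present purpose it is enough to record the statement and then, in the remainder of the proof of Theorem 2.2, to verify that $Z(M)$ is left unchanged by each of the two moves separately, which is the step where the cocycle condition for $\alpha$ actually enters.
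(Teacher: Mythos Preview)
The paper does not prove this statement at all: Theorem~3.2 is stated with attribution to Pachner and used as a black box in the proof of Theorem~2.2. There is no argument given, not even a sketch; the paper simply invokes the result and then verifies invariance of $Z(M)$ under each of the two moves in Lemmas~3.3 and~3.4.

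Your proposal therefore goes well beyond what the paper does. The architecture you outline (common subdivision, Alexander--Newman stellar theory, then Pachner's reduction of stellar moves to bistellar flips) is indeed the standard route to the classical result, and nothing in it is wrong as a summary. But since the paper treats Pachner's theorem as an external citation, the appropriate ``proof'' here is simply a reference, not a reconstruction. Your closing paragraph already recognizes this when you say you would not grind through the combinatorics; in the context of this paper, the entire sketch is in that same category and should be replaced by a citation.
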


\begin{figure}[h] 
\centering
(1,4)-Pachner move \hspace{3.2cm} (2,3)-Pachner move

\includegraphics[width=2.3cm]{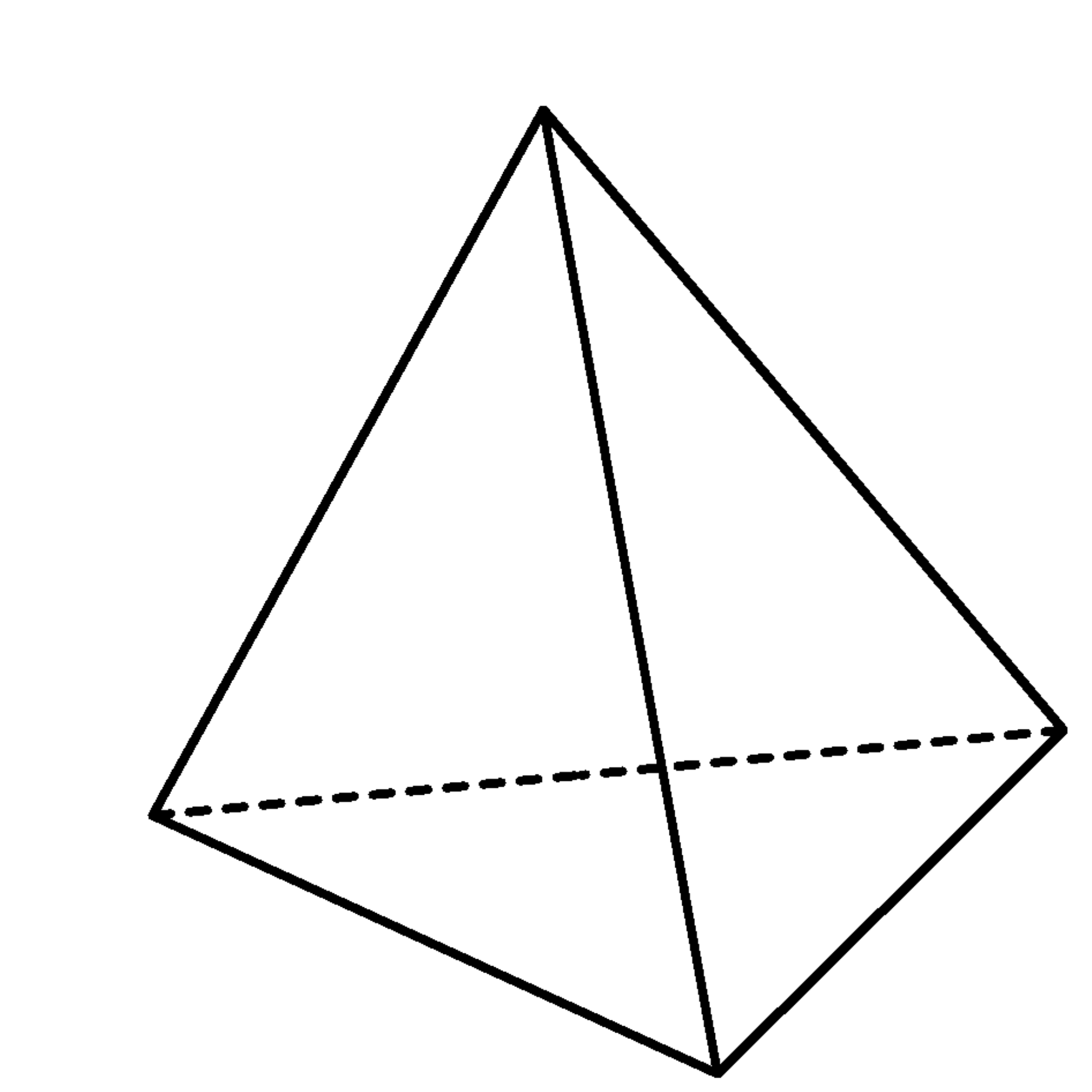}
\includegraphics[width=0.8cm]{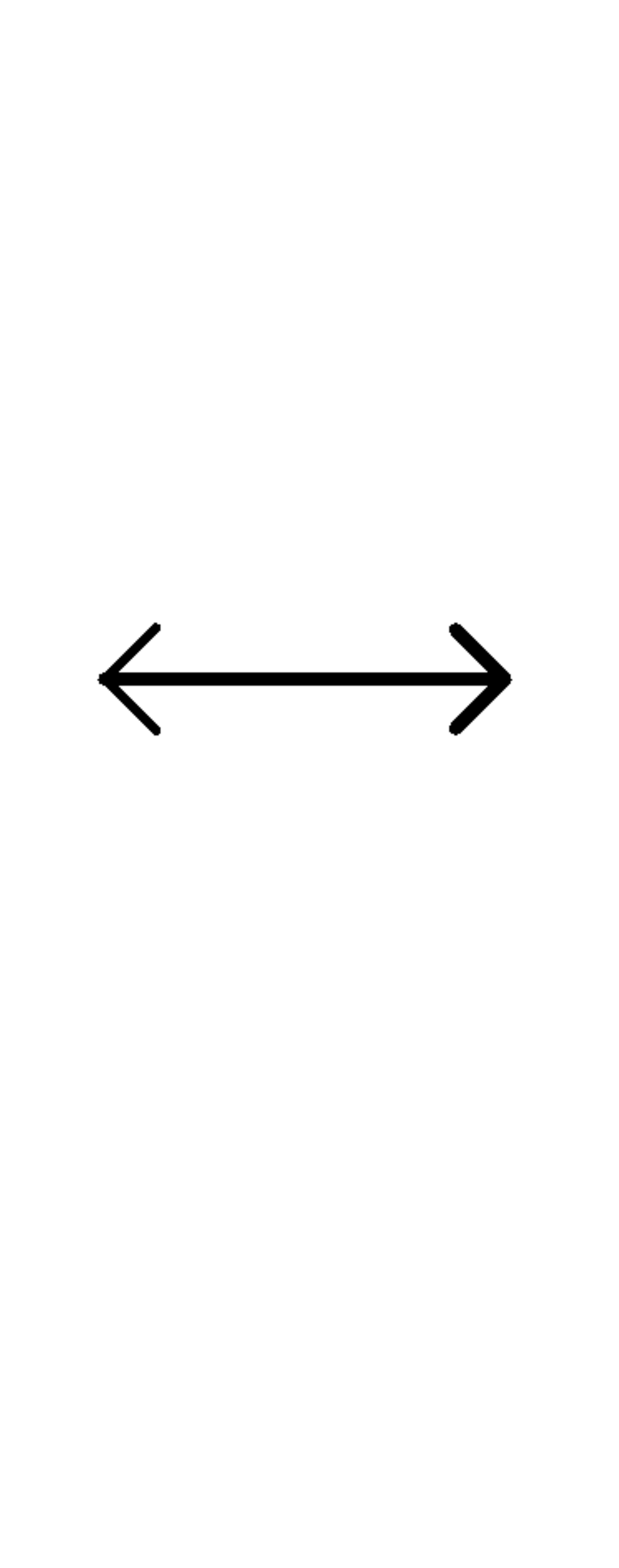}
\includegraphics[width=2.3cm]{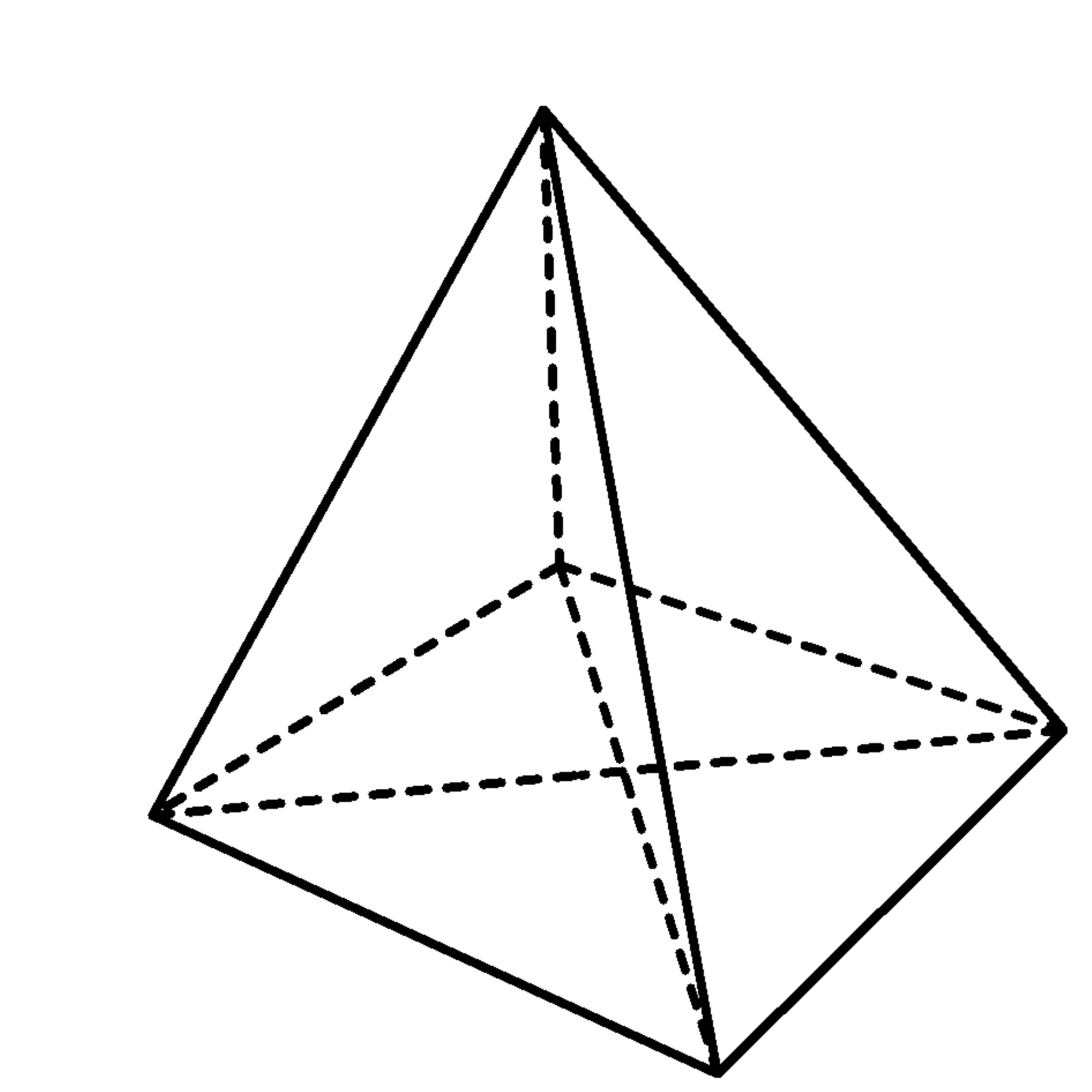}\hspace{0.75cm}
\includegraphics[width=2.3cm]{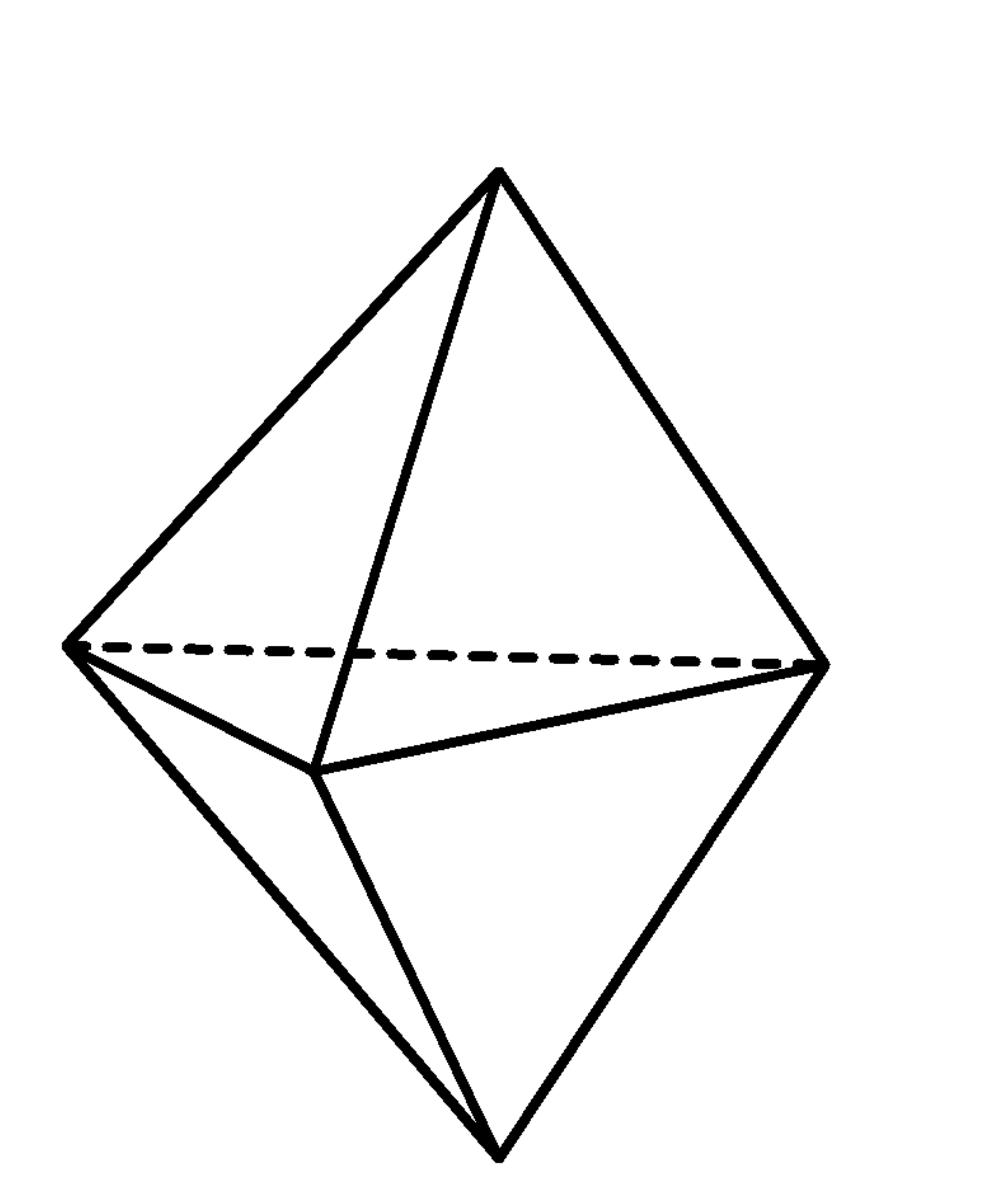}
\includegraphics[width=0.8cm]{arrow2a1.pdf}\hspace{0.15cm}
\includegraphics[width=2.3cm]{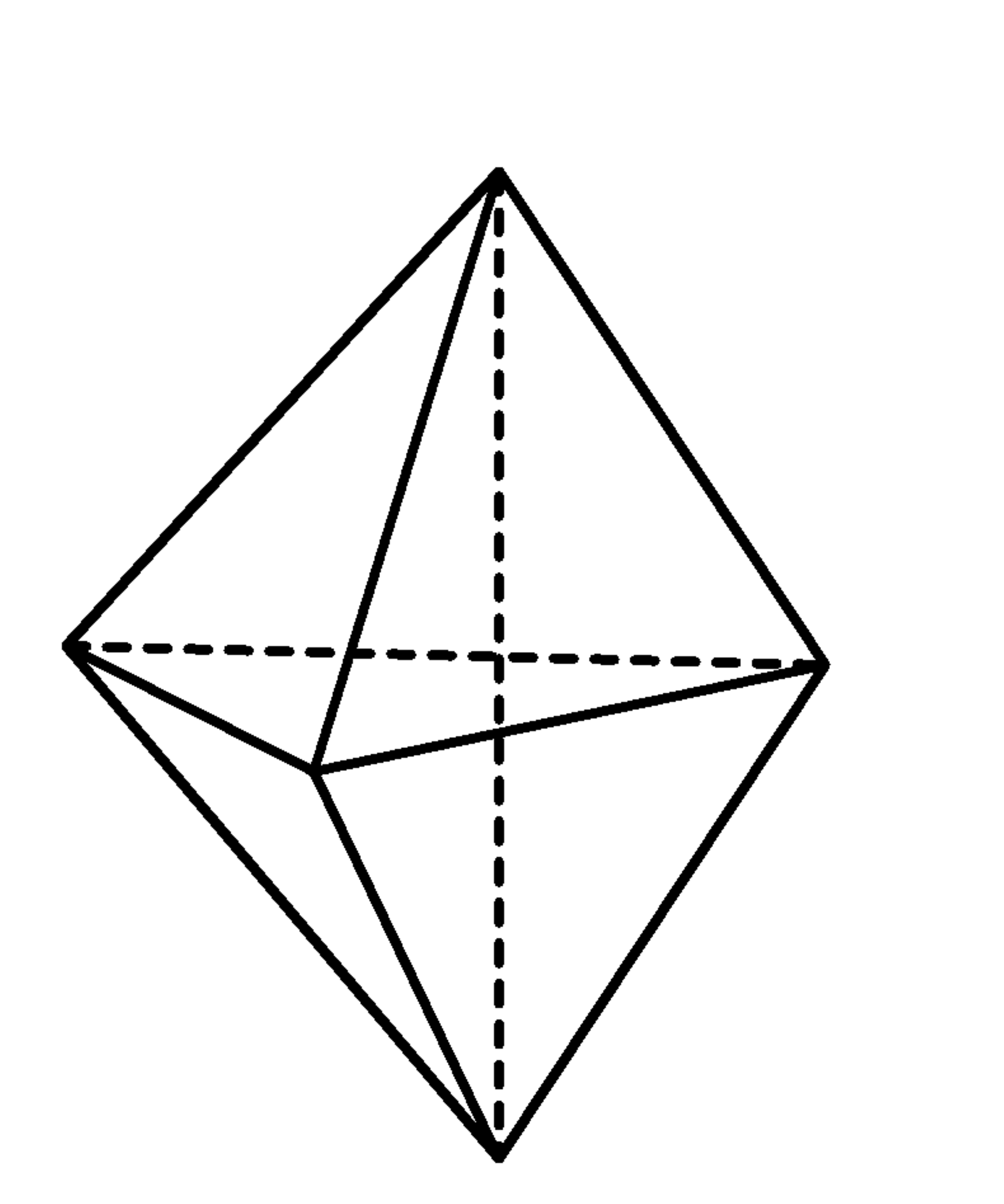}
\caption{The Pachner moves.}
\end{figure}

Let $K$ and $L$ be any two generalized ideal triangulations of $M$.  
Owing to Lemma 3.1, $Z(K) = Z(L)$ implies Theorem 2.2.  
By Theorem 3.2, there exists a finite sequence of generalized ideal triangulations of $M$, 
$K = K_0 \rightarrow K_1 \rightarrow \cdots \rightarrow K_n = L$, 
such that $K_i$ is transformed to $K_{i+1}$ by one of Pachner moves once.  
However, for some $i$, $K_i$ may not admit a local order.  

In order to avoid this problm, we take a barycentric subdivision again.  
Let $K^b$ and $K^{bb}$ be generalized ideal triangulations of $M$ obtained by applying the 
barycentric subdivision to each tetrahedron of $K$ once and twice respectively.  
Even though $K$ does not admit a local order, $K^b$ always admits a local order, for example, the barycentric local order.  
Furthermore $K^{bb}$ can be dealed in the same way as a simplicial triangulation of a closed 3-manifold.  
We call such a generalized ideal triangulation {\it a generalized ideal simplicial triangulation}.
For sufficiently large positive integers $p$ and $q$, there exists a finite sequence of generalized ideal simplicial triangulations of $M$, 
$K^p = K\rq_0 \rightarrow K\rq_1 \rightarrow \cdots \rightarrow K\rq_m = L^q$, 
where $K^p$ and $L^q$ are generalized ideal simplicial triangulations of $M$ 
obtained by applying the barycentric subdivision to each tetrahedron of $K$ $p$ times and of $L$ $q$ times respectively, 
such that $K\rq_i$ is transformed to $K\rq_{i+1}$ by one of Pachner moves once.  

Therefore it suffices to prove that for any generalized ideal simplicial triangulation $K$ of $M$, 
$Z(K)$ (with any fixed local order of $K$) equals to $Z(K\rq)$ (with any fixed local order of $K\rq$), 
where $K\rq$ is the generalized ideal simplicial triangulation of $M$ obtained by one of Pachner moves once from $K$.  
Furthermore we can take a local order determined by a total order on the set of the vertices in the proof of the following lemmas.  
The following two lemmas are given in \cite{Wakui2}.    


\begin{lem}
$Z(M)$ is invariant under a {\rm(1,4)}-Pachner move.
\end{lem}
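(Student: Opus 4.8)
The plan is to mimic the computation already carried out in Step~1 of the proof of Lemma~3.1, since a (1,4)-Pachner move is precisely the local subdivision of a single tetrahedron into four that was analyzed there. Let $K$ be a generalized ideal simplicial triangulation of $M$ and let $K'$ be obtained from $K$ by applying a (1,4)-move to one tetrahedron $\sigma = \langle v_0 v_1 v_2 v_3\rangle$ of $K$, introducing a new interior vertex $b$ at the center of $\sigma$ and replacing $\sigma$ by the four tetrahedra $\sigma_0,\sigma_1,\sigma_2,\sigma_3$ meeting at $b$. Since $K'$ has exactly one more interior vertex than $K$, the normalizing power of $|G|$ in $Z(K')$ increases from $a$ to $a+1$; the strategy is to show that the extra factor $\tfrac{1}{|G|}$ is exactly absorbed by summing over the color $l$ of the new edge $\langle v_3 b\rangle$.

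First I would establish the bijection between colorings. A coloring of $K'$ restricts to a coloring of $K$ (forgetting the edges incident to $b$), and conversely any coloring $\varphi$ of $K$ together with a free choice $l \in G$ of the color of one new edge determines the colors of all remaining new edges via the face relations; this is exactly the bijection $\mathrm{Col}(\sigma)\times G \to \mathrm{Col}(\sigma_0\cup\cdots\cup\sigma_3)$ written in Step~1. Because all tetrahedra of $K$ other than $\sigma$ (and their symbols) are untouched by the move, and the coloring on them is unchanged, it suffices to compare the single local symbol $W(\sigma,\varphi)$ against the product of the four new symbols, summed over $l$. That is, I would reduce the desired identity $Z(K) = Z(K')$ to the purely local claim
$$
W(\sigma, \varphi) = \frac{1}{|G|}\sum_{l \in G} \prod_{i=0}^{3} W(\sigma_i, \varphi_l),
$$
valid for each $\varphi \in \mathrm{Col}(\sigma)$.

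The heart of the argument is then the local identity itself, and this is where the cocycle condition does the work. Writing $g,h,k$ for the colors of the three edges $\langle v_0v_1\rangle,\langle v_1v_2\rangle,\langle v_2v_3\rangle$ and $l$ for the color of $\langle v_3 b\rangle$, one reads off the four symbols $W(\sigma_i,\varphi_l)$ using the sign relation $\epsilon_\sigma = -\epsilon_0 = \epsilon_1 = -\epsilon_2 = \epsilon_3$ (care is needed to identify which triple of consecutive edges, in the local-order sense, gives the argument of $\alpha$ for each $\sigma_i$, and to get every sign correct). The product telescopes: by the 3-cocycle condition applied to the quadruple $(g,h,k,l)$, the four factors $\alpha(h,k,l)^{-\epsilon_\sigma}\alpha(gh,k,l)^{\epsilon_\sigma}\alpha(g,hk,l)^{-\epsilon_\sigma}\alpha(g,h,kl)^{\epsilon_\sigma}$ collapse to the constant $\alpha(g,h,k)^{\epsilon_\sigma} = W(\sigma,\varphi)$, which is independent of $l$. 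Summing over the $|G|$ choices of $l$ and dividing by $|G|$ therefore recovers $W(\sigma,\varphi)$ exactly.

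I expect the main obstacle to be bookkeeping rather than any conceptual difficulty: one must verify that the induced colors and local orders on the four small tetrahedra are consistent with a global local order on $K'$, that the vertex $b$ genuinely contributes the expected factor of $|G|$ to the normalization, and above all that every sign $\epsilon_i$ and every exponent in the cocycle identity is correctly tracked so that the telescoping is exact. Once the local identity is secured, globalizing is immediate because the move is supported on a single tetrahedron and leaves the rest of $Z(K)$ verbatim. Since, as noted in the excerpt, we may work with a local order coming from a total order on the vertices, the orientation signs are determined unambiguously, which removes the only genuine source of ambiguity in the sign computation.
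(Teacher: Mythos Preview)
Your proposal is correct and coincides exactly with the paper's approach: the paper's proof of this lemma is simply the one-line reference ``We have already proved in Step~1 of Lemma~3.1,'' and what you have written is precisely that Step~1 argument spelled out in full. The bijection of colorings, the sign pattern $\epsilon_\sigma = -\epsilon_0 = \epsilon_1 = -\epsilon_2 = \epsilon_3$, and the collapse via the cocycle condition on $(g,h,k,l)$ are all identical to the paper's computation.
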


\begin{proof}
We have already proved in Step 1 of Lemma 3.1.  
\end{proof}

\begin{lem}
$Z(M)$ is invariant under a {\rm(2,3)}-Pachner move.
\end{lem}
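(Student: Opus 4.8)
The plan is to reproduce, in the setting of a (2,3)-move, the local tetrahedron-by-tetrahedron comparison used in Step 1 of Lemma 3.1. A (2,3)-move takes place inside a bipyramid on five vertices; since I work with a generalized ideal simplicial triangulation whose local order comes from a total order, I may label the five vertices $v_0 < v_1 < v_2 < v_3 < v_4$, with $v_1,v_2,v_3$ forming the equatorial triangle and $v_0,v_4$ the two apexes. The two-tetrahedron side consists of $\langle v_0 v_1 v_2 v_3\rangle$ and $\langle v_1 v_2 v_3 v_4\rangle$, glued along the equatorial face $\langle v_1 v_2 v_3\rangle$, while the three-tetrahedron side consists of $\langle v_0 v_1 v_2 v_4\rangle$, $\langle v_0 v_2 v_3 v_4\rangle$ and $\langle v_0 v_1 v_3 v_4\rangle$, sharing the new interior edge $\langle v_0 v_4\rangle$. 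Both configurations fill the same bipyramid and have the same five vertices, so the number $a$ of interior vertices, hence the factor $1/|G|^a$, is unchanged; outside the bipyramid the two triangulations $K$ and $K'$ coincide, so it suffices to compare the two local pieces.

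First I would set up the bijection between the colorings of the two sides. The two sides share the same boundary edges, and the only edge appearing on the three-tetrahedron side but not on the two-tetrahedron side is $\langle v_0 v_4\rangle$. For a coloring, the flatness condition on each of the faces $\langle v_0 v_1 v_4\rangle$, $\langle v_0 v_2 v_4\rangle$, $\langle v_0 v_3 v_4\rangle$ forces $\varphi(\langle v_0 v_4\rangle)$ to be the product, along the local order, of the colors of a path from $v_0$ to $v_4$; the three faces impose the same value precisely because the boundary coloring is itself flat. Hence restriction to the common boundary gives a bijection between $\mathrm{Col}$ of the three-tetrahedron side and $\mathrm{Col}$ of the two-tetrahedron side, exactly as in the bijections used in Lemma 3.1. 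It then remains to prove, for each pair of corresponding colorings, the local weight identity
$$W(\langle v_0 v_1 v_2 v_3\rangle)\,W(\langle v_1 v_2 v_3 v_4\rangle) = W(\langle v_0 v_1 v_2 v_4\rangle)\,W(\langle v_0 v_2 v_3 v_4\rangle)\,W(\langle v_0 v_1 v_3 v_4\rangle).$$

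To evaluate the symbols I would set $g = \varphi(\langle v_0 v_1\rangle)$, $h = \varphi(\langle v_1 v_2\rangle)$, $k = \varphi(\langle v_2 v_3\rangle)$, $l = \varphi(\langle v_3 v_4\rangle)$ and read off the remaining edge colors from flatness, namely $\varphi(\langle v_0 v_2\rangle) = gh$, $\varphi(\langle v_1 v_3\rangle) = hk$, $\varphi(\langle v_2 v_4\rangle) = kl$ and $\varphi(\langle v_1 v_4\rangle) = hkl$. Each symbol is then $\alpha$ evaluated on the three consecutive edge colors of the tetrahedron: the two-tetrahedron side yields $\alpha(g,h,k)$ and $\alpha(h,k,l)$, while the three-tetrahedron side yields $\alpha(g,h,kl)$, $\alpha(gh,k,l)$ and $\alpha(g,hk,l)$. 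After inserting the signs $\epsilon_\sigma$, the asserted identity becomes exactly the 3-cocycle condition
$$\alpha(h,k,l)\,\alpha(g,hk,l)\,\alpha(g,h,k) = \alpha(gh,k,l)\,\alpha(g,h,kl)$$
recorded in Section 2, since the five tetrahedra are the five codimension-one faces of the $4$-simplex $\langle v_0 v_1 v_2 v_3 v_4\rangle$ and the cocycle condition is precisely the vanishing of $\delta^3\alpha$ on that simplex.

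The step I expect to be the main obstacle is the sign bookkeeping. I must verify that, with a single orientation of $M$ and the local order $v_0 < \cdots < v_4$, the induced signs $\epsilon_\sigma$ of the five tetrahedra agree with the alternating signs in $\partial\langle v_0 v_1 v_2 v_3 v_4\rangle = \langle v_1 v_2 v_3 v_4\rangle - \langle v_0 v_2 v_3 v_4\rangle + \langle v_0 v_1 v_3 v_4\rangle - \langle v_0 v_1 v_2 v_4\rangle + \langle v_0 v_1 v_2 v_3\rangle$, so that the factor $\alpha(g,hk,l)$ carries the inverse exponent and the displayed weight identity is literally the rearranged cocycle condition rather than its inverse or a permuted variant; concretely this amounts to checking that the two fillings of the oriented bipyramid differ, as oriented chains with the $M$-orientation signs, by $\pm\,\partial\langle v_0 v_1 v_2 v_3 v_4\rangle$. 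Once these signs are matched, summing the local identity over the coloring bijection and multiplying by the symbols of the tetrahedra lying outside the bipyramid, which are identical on the two sides, yields $Z(K) = Z(K')$ and completes the proof.
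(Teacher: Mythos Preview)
Your proposal is correct and follows essentially the same route as the paper: label the five vertices $v_0<\cdots<v_4$ via a total order (legitimized by Lemma~3.1), set $g,h,k,l$ as the four consecutive edge colors, observe that the new edge $\langle v_0v_4\rangle$ has its color forced so that $\mathrm{Col}(K)\to\mathrm{Col}(K')$ is a bijection, and reduce the local weight identity to the 3-cocycle relation for $(g,h,k,l)$. Your anticipated ``main obstacle'' is exactly the paper's one-line sign computation $\epsilon_\sigma=\epsilon_{\sigma'}=\epsilon_{\sigma_1}=-\epsilon_{\sigma_2}=\epsilon_{\sigma_3}$, which is precisely the alternating-sign pattern of $\partial\langle v_0v_1v_2v_3v_4\rangle$ you identified.
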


\begin{figure}[h]
\centering
\includegraphics[width=3.5cm]{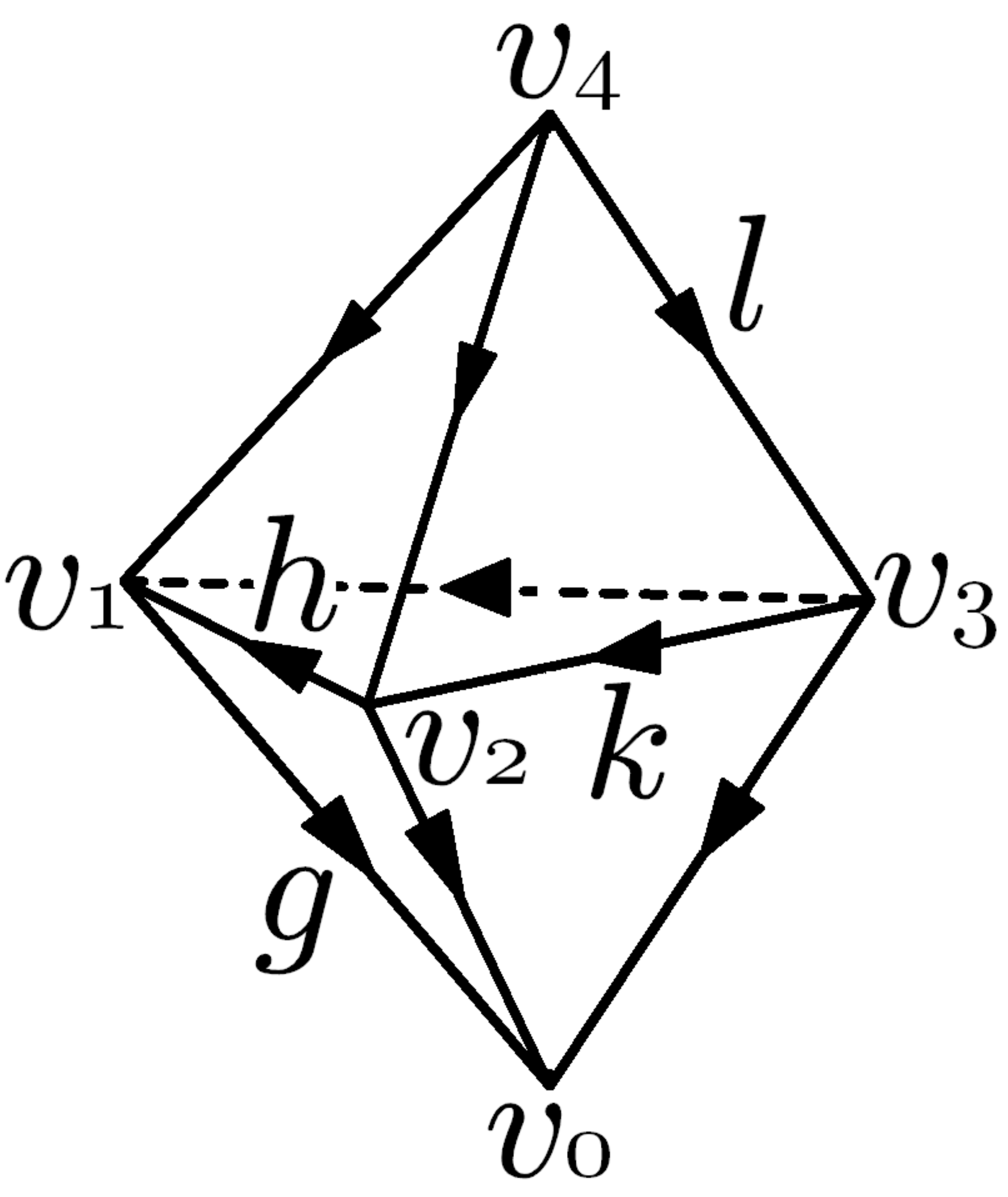} 
\includegraphics[width=1.2cm]{arrow1a3.pdf}
\includegraphics[width=3.5cm]{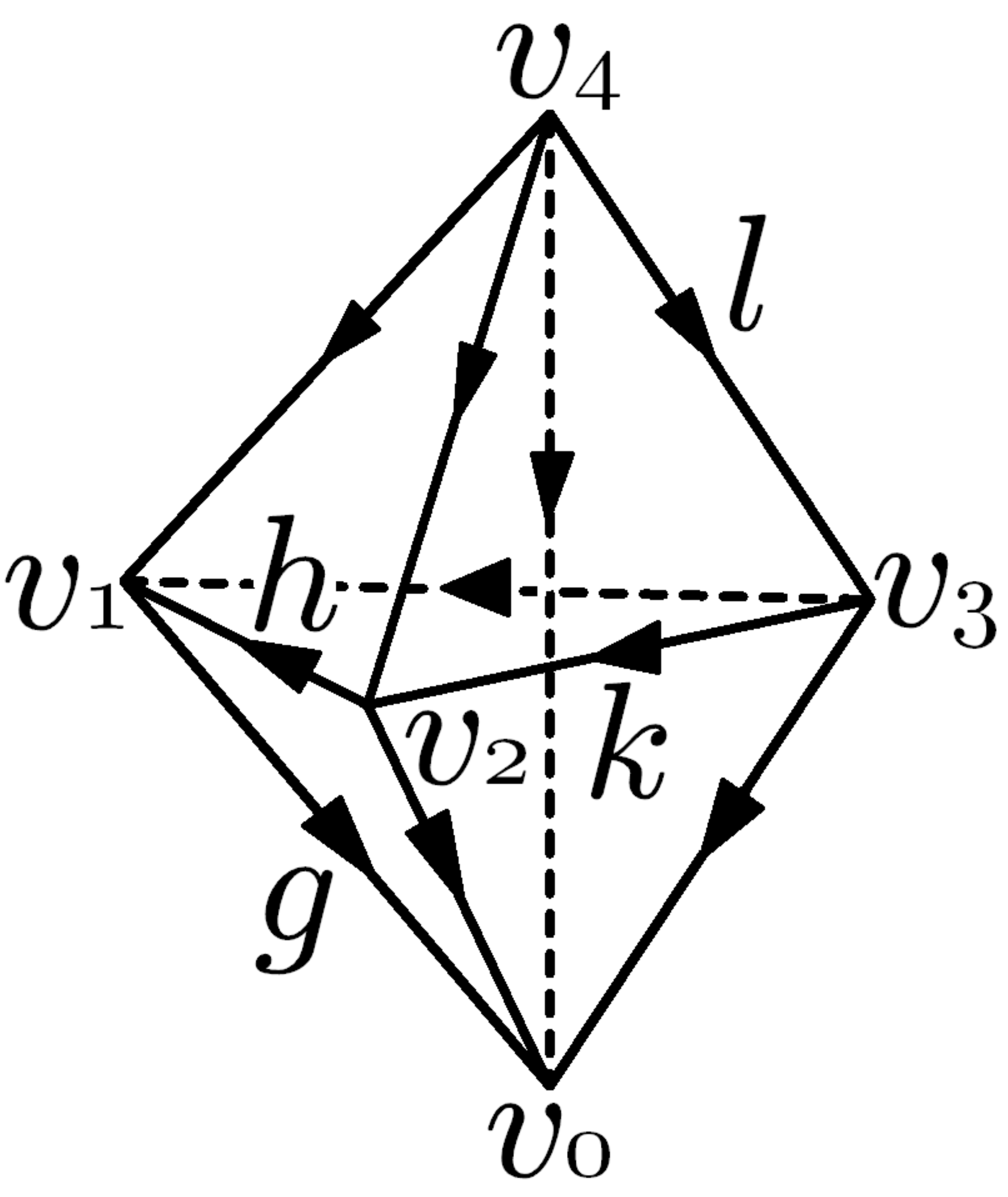}
\caption{The local order for a (2,3)-Pachner move.}
\end{figure}

\begin{proof}
Set $\sigma = \langle v_0 v_1 v_2 v_3\rangle$, $\sigma\rq = \langle v_1 v_2 v_3 v_4\rangle$, $\sigma_1 = \langle v_0 v_2 v_3 v_4\rangle$, 
$\sigma_2 = \langle v_0 v_1 v_3 v_4\rangle$, $\sigma_3 = \langle v_0 v_1 v_2 v_4\rangle$ and 
$K\rq =(K\setminus \{\sigma, \sigma\rq\}) \cup \{\sigma_1, \sigma_2, \sigma_3\}$.  
Suppose that $v_0 < v_1 < v_2 < v_3 < v_4$.  Then $\epsilon_{\sigma} = \epsilon_{\sigma\rq} = \epsilon_1 = -\epsilon_2 = \epsilon_3$ and 
$${\rm Col}(K) \ni \varphi \mapsto \varphi\rq \in {\rm Col}(K\rq)$$ 
is a bijection, where $\varphi\rq$ is the coloring such that
\[
 \varphi\rq (E) = 
  \begin{cases}
   \varphi (E) & E {\rm \:is \:an \:edge \:of}\:K\\
   \varphi(\langle v_0 v_1\rangle)\varphi(\langle v_1 v_2\rangle)\varphi(\langle v_2 v_3\rangle)\varphi(\langle v_3 v_4\rangle) & E = \langle v_0 v_4\rangle.
  \end{cases}
\]

\noindent It suffices to prove that for any $\varphi \in {\rm Col}(K)$, 
$$W(\sigma, \varphi) W(\sigma\rq, \varphi) = W(\sigma_1, \varphi\rq) W(\sigma_2, \varphi\rq) W(\sigma_3, \varphi\rq).$$
Set $\varphi(\langle v_0 v_1\rangle) = g$, $\varphi(\langle v_1 v_2\rangle) = h$, $\varphi(\langle v_2 v_3\rangle) = k$, $\varphi(\langle v_3 v_4\rangle) = l$ 
and then this equation follows from the cocycle condition for ($g,h,k,l$).
\end{proof}

We complete the proof of Theorem 2.2.

We present simple properties of the generalized DW invariant which are known for the original DW invariant in \cite{Wakui1}.  
The following proposition can be proved in the same way as the original DW case in \cite{Wakui1}.
 
\begin{prop}
Let $M$ be a compact or cusped oriented 3-manifold, $G$ a finite group and $\alpha\in Z^3(G,U(1))$.  Then the following holds.

(1) $Z(M)$ only depends on the cohomology class of $\alpha$.

(2) $Z(-M) = \overline{Z(M)}$, where $-M$ is the oriented 3-manifold with the opposite orientation to $M$.
\end{prop}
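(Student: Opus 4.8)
The plan is to treat the two parts separately, in each case reducing the assertion to a tetrahedron-by-tetrahedron computation on a single fixed generalized ideal triangulation $K$ of $M$ equipped with a fixed local order, exactly as in the arguments above. By Theorem 2.2 the value $Z(M)$ does not depend on these choices, so it suffices to verify each claim for one such $K$.

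For part (1), I would take $\alpha$ and $\alpha'$ to be cohomologous normalized $3$-cocycles, so that $\alpha' = \alpha\cdot\delta^2\beta$ for some normalized $2$-cochain $\beta$ (using Proposition 2.1). I would first record that the weight of a single colored tetrahedron changes by
$$W'(\sigma,\varphi) = \alpha'(g,h,k)^{\epsilon_\sigma} = W(\sigma,\varphi)\cdot(\delta^2\beta)(g,h,k)^{\epsilon_\sigma},$$
where $(\delta^2\beta)(g,h,k) = \beta(h,k)\beta(gh,k)^{-1}\beta(g,hk)\beta(g,h)^{-1}$. The four factors on the right correspond to the four $2$-faces of $\sigma$, each being $\beta$ evaluated on the pair of edge-colors determined by the local order on that face. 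Consequently, in the full product $\prod_i W'(\sigma_i,\varphi)$ the correction factors reorganize into a product over the $2$-faces of $K$, each face $F$ contributing $\beta(F)^{s_F}$, where $s_F$ collects the signed exponents coming from the tetrahedra incident to $F$ and $\beta(F)$ abbreviates $\beta$ applied to the ordered edge-colors of $F$.

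The crux is then to show $s_F = 0$ for every face $F$. Since $K$ is a generalized ideal triangulation, every $2$-face is interior and is shared by exactly two tetrahedra $\sigma$ and $\sigma'$, which induce opposite orientations on $F$ and satisfy $\epsilon_\sigma = -\epsilon_{\sigma'}$, so their two contributions to $\beta(F)$ are mutually inverse. This is precisely the cancellation mechanism already exploited in Steps 2 and 3 of the proof of Lemma 3.1, and I expect this sign bookkeeping to be the only genuine obstacle in the argument. Granting it, $\prod_i W'(\sigma_i,\varphi) = \prod_i W(\sigma_i,\varphi)$ for every $\varphi\in{\rm Col}(K)$, and hence $Z(M)$ is computed identically from $\alpha$ and $\alpha'$.

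For part (2), I would retain the same $K$ and the same local order, now regarded as a generalized ideal triangulation of $-M$. Neither ${\rm Col}(K)$ nor the number $a$ of interior vertices depends on the orientation of $M$, and the sole change is that every sign reverses, $\epsilon_\sigma^{-M} = -\epsilon_\sigma^{M}$. Because $\alpha$ is $U(1)$-valued we have $z^{-1} = \overline{z}$ for each value $z$, so $W_{-M}(\sigma,\varphi) = \alpha(g,h,k)^{-\epsilon_\sigma} = \overline{W_M(\sigma,\varphi)}$. Since complex conjugation commutes with the finite products and sums and fixes the real scalar $|G|^{-a}$, it follows that $Z(-M) = \overline{Z(M)}$, completing the proposition.
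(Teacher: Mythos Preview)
The paper does not actually prove this proposition; it only remarks that the argument is the same as in Wakui's original paper and cites \cite{Wakui1}. Your outline is the standard argument and is essentially what that reference does, and part~(2) is entirely correct as written.

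There is, however, a genuine slip in your cancellation argument for part~(1). From the fact that the two tetrahedra $\sigma,\sigma'$ sharing a face $F$ induce opposite orientations on $F$, you conclude $\epsilon_\sigma = -\epsilon_{\sigma'}$; this is false in general. If $F$ is the face opposite the $i$-th vertex of $\sigma$ (in the local order) and opposite the $j$-th vertex of $\sigma'$, then the exponent with which $\beta(F)$ enters from $\sigma$ is $(-1)^{i}\epsilon_\sigma$, and from $\sigma'$ it is $(-1)^{j}\epsilon_{\sigma'}$. The quantity $(-1)^{i}\epsilon_\sigma$ is precisely the sign comparing the local-order orientation of $F$ to the boundary orientation of $F$ induced from the orientation of $M$ through $\sigma$; since $\sigma$ and $\sigma'$ induce opposite boundary orientations on $F$, one gets $(-1)^{i}\epsilon_\sigma = -(-1)^{j}\epsilon_{\sigma'}$, and \emph{this} is what makes the two contributions of $\beta(F)$ cancel. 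Your conclusion $\epsilon_\sigma=-\epsilon_{\sigma'}$ only follows in the special situation (as in Step~2 of Lemma~3.1) where $i=j$. Once you replace the incorrect intermediate claim by the correct signed-face statement, the rest of your argument goes through unchanged.
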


Although we introduce a generalized ideal triangulation in the definition of the generalized DW invariant, 
it suffices to consider ideal triangulations of $M$ by the following two theorems. 

\begin{thm}[{\cite[Theorem 1.2.27]{Matveev}}]
Any two  ideal triangulations of a 3-manifold $M$ can be transformed one to another by a finite sequence of the following two types of transformations 
shown in Figure 9.      
\end{thm}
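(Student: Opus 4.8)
The plan is to dualize the problem and appeal to Matveev's theory of special spines, rather than to manipulate the triangulations directly. To an ideal triangulation $K$ of $M$ (so I assume $\partial M \neq \emptyset$, as otherwise ideal triangulations do not exist) one assigns the $2$-complex $P_K$ dual to $K$: its true vertices are dual to the tetrahedra of $K$, its triple edges dual to the $2$-faces, and its $2$-components dual to the edges, the ideal vertices of $K$ corresponding to the boundary components. After passing to the compact manifold obtained by removing open regular neighborhoods of the ideal vertices, $P_K$ is a \emph{special spine} of that manifold, and $K \mapsto P_K$ is a bijection between ideal triangulations of $M$ and special spines of the associated compact manifold.

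First I would make this correspondence precise and check that it is invertible, reconstructing from a special spine $P$ the unique ideal triangulation whose tetrahedra are the dual cells of the true vertices of $P$. Next I would invoke the Matveev--Piergallini theorem, which asserts that any two special spines of the same compact $3$-manifold are related by a finite sequence of the two elementary spine moves, namely the Matveev move (also called the $T$-move) and the lune move, together with their inverses. It then remains to verify that, under the duality, the $T$-move corresponds exactly to the first move of Figure 9 (the $(2,3)$-move) and the lune move corresponds exactly to the second (the $(0,2)$-move). Each identification is a purely local check in a regular neighborhood of the cells involved, and each move visibly preserves the class of ideal triangulations, since none of them creates an interior vertex; this is exactly why one cannot simply quote Theorem 3.2, whose $(1,4)$-move would produce such a vertex.

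The hard part will be the exceptional low-complexity cases. The Matveev--Piergallini theorem is stated under a hypothesis that the spines possess at least two true vertices, equivalently that the ideal triangulations have at least two tetrahedra, so I would first dispose of triangulations below this threshold, either by applying a single move to raise the number of tetrahedra and reduce to the generic situation or by a finite direct enumeration. A secondary point requiring care is the normalization of the correspondence: one must confirm that the two moves appearing in Figure 9 are literally the images of the two Matveev--Piergallini generators, and not subdivisions or compositions of them, so that the move counts agree on both sides.
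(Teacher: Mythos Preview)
The paper does not prove this theorem at all; it is simply quoted as Theorem~1.2.27 from Matveev's book and then used as a black box in the remainder of Section~3, so there is no proof in the paper against which to compare your proposal.

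That said, your outline is the standard argument and is, in substance, the proof given in the cited reference. Matveev establishes the duality between ideal (singular) triangulations and special spines, proves (with Piergallini) that any two special spines of the same compact $3$-manifold with at least two true vertices are related by $T$-moves and lune moves, and observes that under the duality these correspond to the $(2,3)$- and $(0,2)$-Pachner moves respectively. Your caution about the hypothesis ``at least two true vertices'' is well placed; this is handled exactly as you suggest, by first applying a single move to raise the number of vertices. One cosmetic slip: in Figure~9 the $(0,2)$-move is listed on the left and the $(2,3)$-move on the right, so your ``first'' and ``second'' are interchanged, though this has no mathematical effect.
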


\begin{figure}[h]
\centering
(0,2)-Pachner move \hspace{3.2cm} (2,3)-Pachner move

\includegraphics[width=2cm]{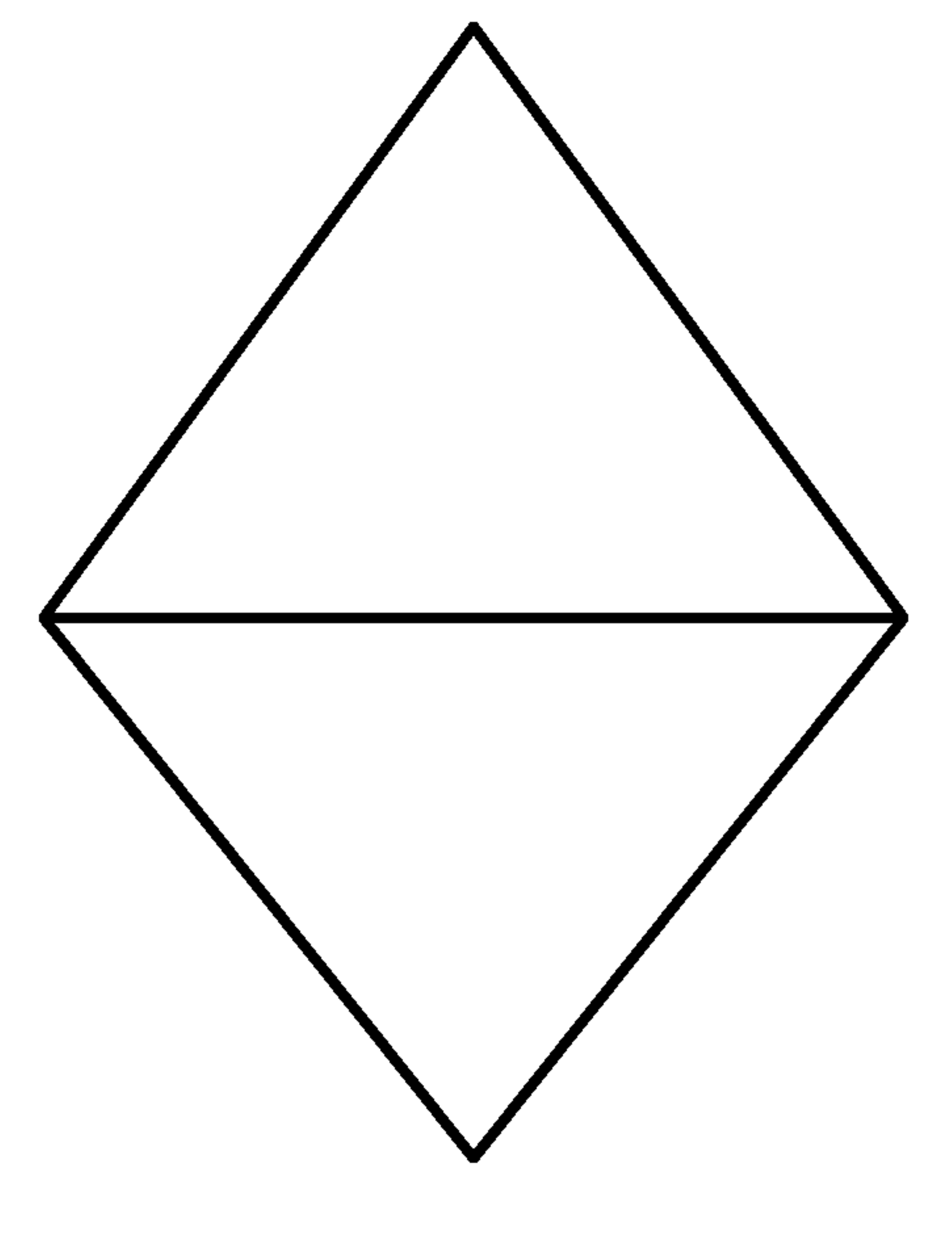}\hspace{0.1cm}
\includegraphics[width=0.8cm]{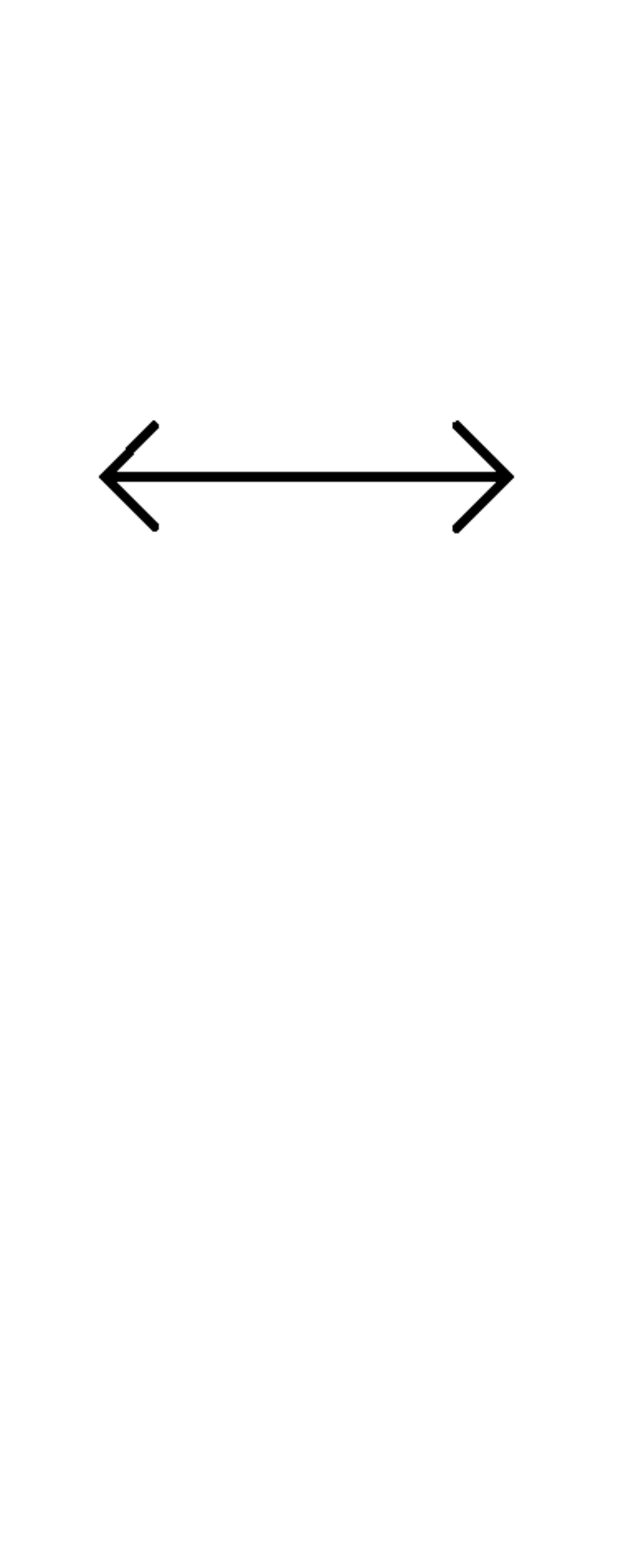}\hspace{0.1cm}
\includegraphics[width=2cm]{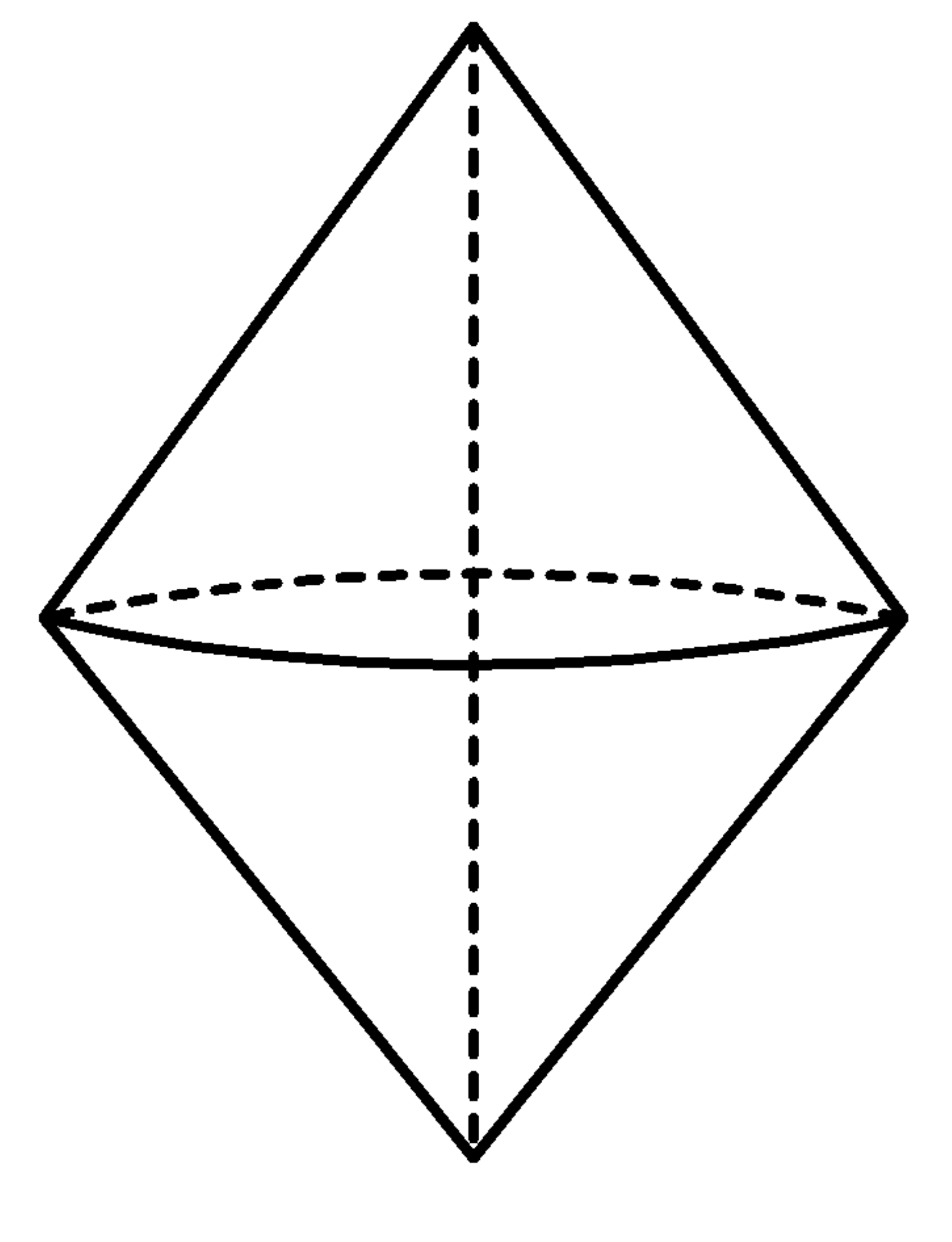}\hspace{0.9cm} 
\includegraphics[width=2.3cm]{2-3pach3.pdf}
\includegraphics[width=0.8cm]{arrow2a.pdf}\hspace{0.15cm}
\includegraphics[width=2.3cm]{2-3pach4.pdf}
\caption{The Pachner moves for ideal triangulations.}
\end{figure}

We call a (2,3)-Pachner move that increases the number of the ideal tetrahedra {\it a positive (2,3)-Pachner move} in this paper.  
In general, a given ideal triangulation of $M$ may not admit a local order.  
However Benedetti and Petronio proved the existence of an ideal triangulation with a local order \cite[Theorem 3.4.9]{Benedetti2}.

\begin{thm}[Benedetti-Petronio]
Let $M$ be a compact oriented 3-manifold with boundary and $K$ an ideal triangulation of $M$.  
Then there exists a finite sequence of ideal triangulations of $M$, 
$K = K_0 \rightarrow K_1 \rightarrow \cdots \rightarrow K_n$, 
such that $K_i$ is transformed to $K_{i+1}$ by a positive (2,3)-Pachner move and $K_n$ admits a local order.
\end{thm}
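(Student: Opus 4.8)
The plan is to reformulate the conclusion in purely combinatorial terms and then realize each step of an induction by a single positive $(2,3)$-Pachner move. A local order is precisely a \emph{branching}: an orientation of every edge such that no $2$-face is a directed cycle. I would first record the elementary fact that this face condition already forces the vertex structure used in the definition of $\epsilon_\sigma$. The six edges of a tetrahedron $\langle v_0 v_1 v_2 v_3\rangle$ form a tournament on its four vertices, and the directed $3$-cycles of this tournament are exactly its four $2$-faces; since a tournament is transitive precisely when it has no directed $3$-cycle, ``no cyclic face'' is equivalent to ``the four vertices are linearly ordered,'' which is the structure $v_0 < v_1 < v_2 < v_3$. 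Thus the obstruction to admitting a local order is localized entirely at the $2$-faces, and the quantity to be controlled is the number of \emph{bad} faces, i.e. faces whose three edges are oriented cyclically.

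Next I would describe the engine of the induction. Fix any edge-orientation of $K$ and suppose a face $F = \langle ABC\rangle$ is bad. In an ideal triangulation every face is internal, so $F$ is shared by two tetrahedra $\langle ABCD\rangle$ and $\langle ABCE\rangle$; the positive $(2,3)$-move across $F$ replaces them by $\langle ABDE\rangle, \langle BCDE\rangle, \langle ACDE\rangle$, destroying $F$ and creating a single new edge $\langle DE\rangle$ together with three new faces $\langle ADE\rangle, \langle BDE\rangle, \langle CDE\rangle$, while every other face and its orientation is preserved. The key point is that for a triangle with two of its edges already oriented, at least one orientation of the third edge avoids a directed cycle, and exactly one orientation is forbidden precisely when the shared vertex is neither a local source nor a local sink. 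Applying this to the three triangles around $\langle DE\rangle$, I would orient $\langle DE\rangle$ to keep as many of them non-cyclic as possible and then observe that the bad face $F$ itself has been removed.

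The hard part is that this greedy step need not make monotone progress: the three triangles around $\langle DE\rangle$ may impose contradictory constraints, one forbidding $D \to E$ and another forbidding $E \to D$, so that killing $F$ forces a new bad face among $\langle ADE\rangle, \langle BDE\rangle, \langle CDE\rangle$. The decisive work therefore lies in arranging the orientations of $AD, AE, BD, BE, CD, CE$ before the move so that no such conflict arises---either by first performing auxiliary positive $(2,3)$-moves in the two tetrahedra adjacent to $F$ to reorient these edges, or, as Benedetti and Petronio do, by organizing the entire construction around a global combing of the dual spine from which a consistent branching is read off. Finally I would package the argument as a well-founded induction, and here the genuine subtlety appears: a positive $(2,3)$-move strictly \emph{increases} the number of tetrahedra, so the naive count cannot serve as the monovariant. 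One must instead exhibit a well-ordered complexity---a lexicographic combination of the number of bad faces with secondary combinatorial data of the spine---that strictly decreases at each stage, thereby guaranteeing termination at a branched triangulation $K_n$.
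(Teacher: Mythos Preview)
The paper does not supply a proof of this statement: it is quoted as Theorem~3.4.9 of \cite{Benedetti2} and used as a black box, so there is no in-paper argument to compare your attempt against.

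Regarding your sketch on its own terms, there is a genuine gap. You correctly isolate the obstacle---a positive $(2,3)$-move that kills a bad face $F$ may create a new bad face among $\langle ADE\rangle,\langle BDE\rangle,\langle CDE\rangle$, while the tetrahedron count only grows---but you do not resolve it: you merely assert that ``one must exhibit a well-ordered complexity\dots that strictly decreases'' without producing one, and you do not verify that the preparatory re-orientation of $AD,AE,BD,BE,CD,CE$ by auxiliary $(2,3)$-moves can always be carried out without generating fresh bad faces elsewhere. Since that termination mechanism is exactly the substance of the theorem, the outline stops short of a proof. For comparison, Benedetti and Petronio do not argue by any such local induction on bad faces: they pass to the dual picture of standard spines, use the existence of a nowhere-vanishing vector field on $M$ to construct a flow-spine that is automatically branched, and then connect an arbitrary spine to this one within the Matveev--Piergallini calculus, whose positive lune moves dualize to positive $(2,3)$-moves. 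Their argument is global and geometric, which is precisely how they sidestep the termination problem you identify but leave open.
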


\begin{cor}
For any cusped or compact 3-manifold $M$ with boundary, there exists an ideal triangulation $K$ of $M$ with a local order.  
Since $K$ does not have interior vertices, the generalized Dijkgraaf-Witten invariant $Z(M)$ is described by the following form: 
$$
Z(M) = \sum_{\varphi \in {\rm Col}(K)} \prod_{i=1}^{n} W(\sigma_i, \varphi).
$$
\end{cor}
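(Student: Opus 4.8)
The plan is to produce the required triangulation by feeding an arbitrary ideal triangulation into Theorem 3.8, and then to read off the simplified formula by specializing the general definition to the case of no interior vertices.

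First I would invoke the standard fact that every cusped or compact oriented 3-manifold $M$ with nonempty boundary admits an ideal triangulation $K_0$ (cf. \cite{Matveev}). This $K_0$ may fail to admit a local order, so I would then apply Theorem 3.8 (Benedetti-Petronio) to $K_0$. This produces a finite sequence of ideal triangulations $K_0 \rightarrow K_1 \rightarrow \cdots \rightarrow K_n$, each obtained from its predecessor by a positive (2,3)-Pachner move, whose final term $K_n$ admits a local order. Taking $K := K_n$ then establishes the first assertion of the corollary, namely the existence of an ideal triangulation of $M$ with a local order.

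Next I would derive the closed formula. Since an ideal triangulation is precisely a generalized ideal triangulation with no interior vertices, $K$ is a legitimate triangulation for the definition of $Z(M)$; by Theorem 2.2 the invariant may therefore be computed from $K$ together with its local order, independently of any other choice. The number $a$ of interior vertices of $K$ is zero, so the normalizing prefactor satisfies $1/|G|^a = 1/|G|^0 = 1$, and the general formula
$$Z(M) = \frac{1}{|G|^a}\sum_{\varphi\in {\rm Col}(K)} \prod_{i=1}^{n} W(\sigma_i, \varphi)$$
collapses to the asserted expression $Z(M) = \sum_{\varphi \in {\rm Col}(K)} \prod_{i=1}^{n} W(\sigma_i, \varphi)$.

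Because this statement merely assembles results already proved, I do not expect a genuine obstacle. The only point that warrants explicit mention is the compatibility between the two notions of triangulation: one must observe that the invariance asserted in Theorem 2.2 ranges over all generalized ideal triangulations admitting a local order, and hence in particular applies to the ideal triangulation $K$ constructed above. Granting this, both the reduction to ideal triangulations and the simplification of the normalizing factor are immediate.
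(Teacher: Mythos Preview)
Your proposal is correct and follows exactly the line the paper intends: the corollary is stated without proof precisely because it is an immediate consequence of the Benedetti--Petronio theorem together with the observation that an ideal triangulation has $a=0$ interior vertices, so that the prefactor $1/|G|^a$ disappears. The only slip is a numbering one: the Benedetti--Petronio result is Theorem~3.7 in the paper, while Corollary~3.8 is the very statement you are proving.
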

 

\section{Examples of cusped hyperbolic 3-manifolds}

In this section, we calculate the generalized DW invariants of some cusped orientable hyperbolic 3-manifolds by using Theorem 3.7 and Corollary 3.8.  
We show that the generalized DW invariants distinguish some pairs of cusped hyperbolic 3-manifolds 
with the same hyperbolic volumes and with the same Turaev-Viro invariants.
We also present an example of a pair of cusped hyperbolic 3-manifolds with the same hyperbolic volumes and with the same homology groups,
whereas with distinct generalized DW invariants.
 
For a positive integer $m$, it is known that $H^3(\mathbb{Z}_m, U(1))$ is isomorphic to $\mathbb{Z}_m$ 
and a generator $\alpha$ of $H^3(\mathbb{Z}_m, U(1)) \cong  \mathbb{Z}_m$ is described by the following formula \cite{Altschuler}:
$$\alpha(g_1,g_2,g_3) = \exp{(\frac{2\pi i}{m^2}\overline{g_1}(\overline{g_2}+\overline{g_3}-\overline{g_2+g_3}))},$$

\noindent where $\overline{g_i} \in \{0, \cdots , m-1\}$ is a representative of $g_i \in \mathbb{Z}_m$.  

(1) $m003$ and $m004$

According to Regina \cite{Burton} and SnapPy \cite{Culler}, 
$m003$ and $m004$ are cusped orientable 3-manifolds with the minimal ideal triangulations shown in 

\begin{figure}
\centering
$m003$ \hspace{3.6cm} $m004$ ( = $S^3\setminus4_1$)

\includegraphics[width=2.8cm]{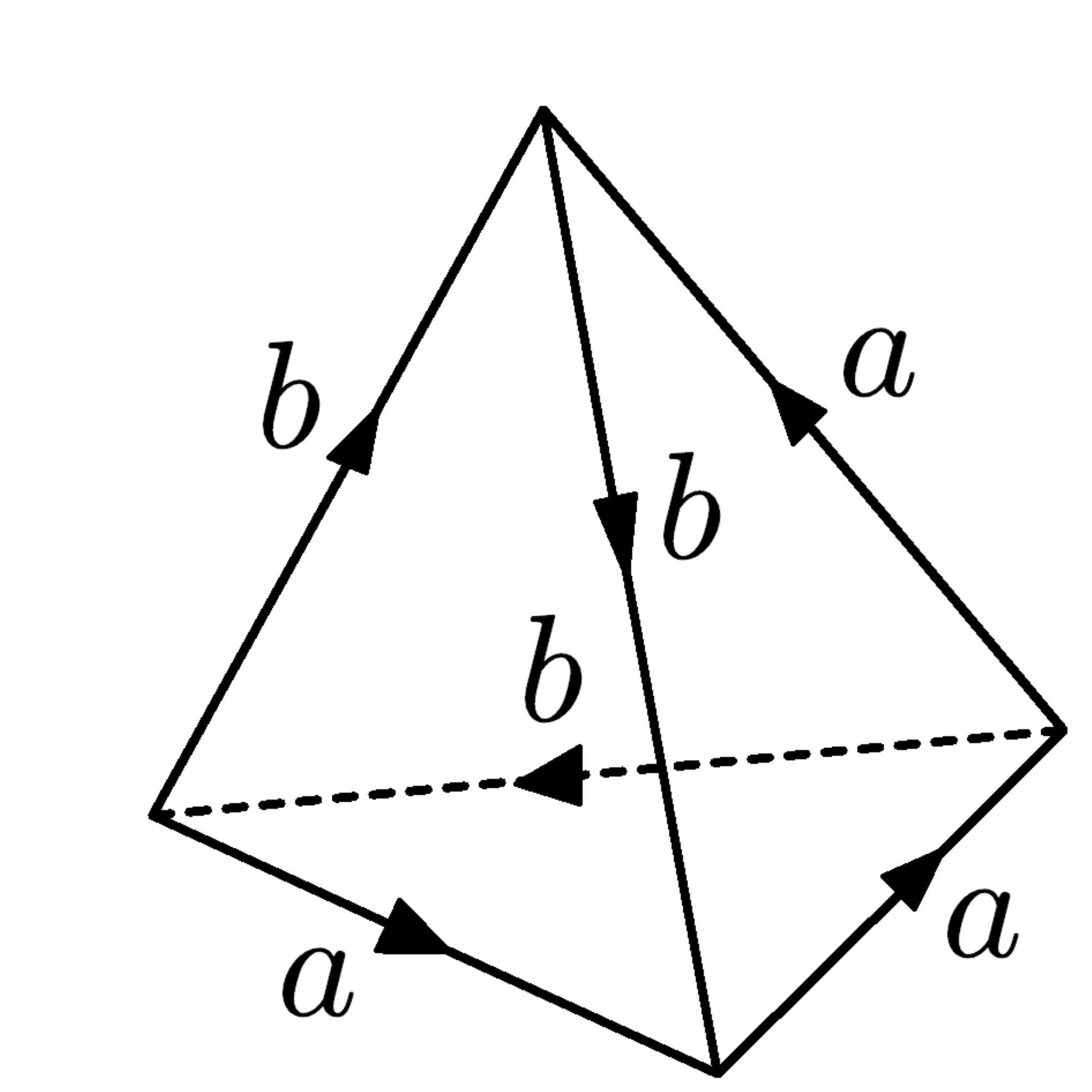}
\includegraphics[width=2.8cm]{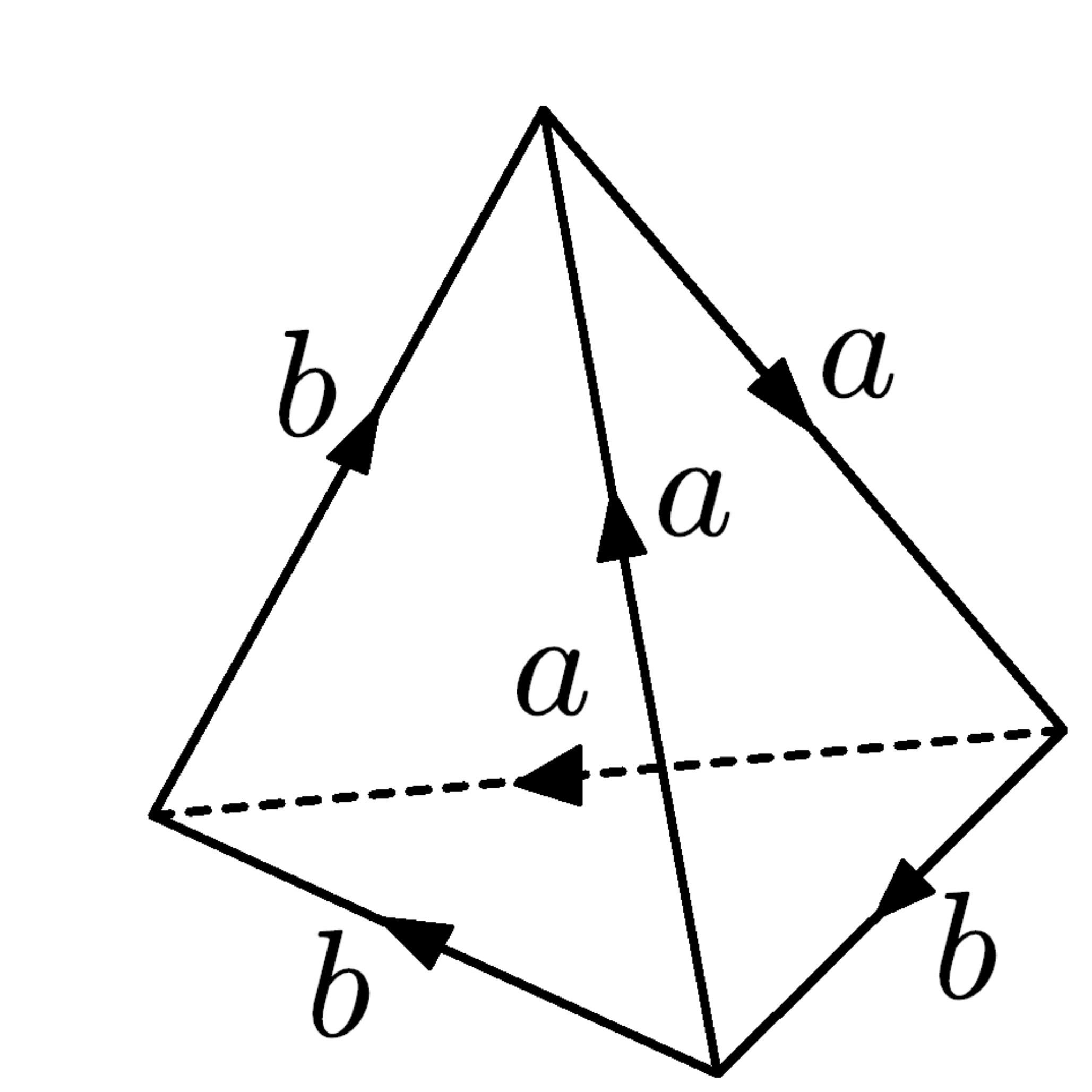} \hspace{0.57cm}
\includegraphics[width=2.8cm]{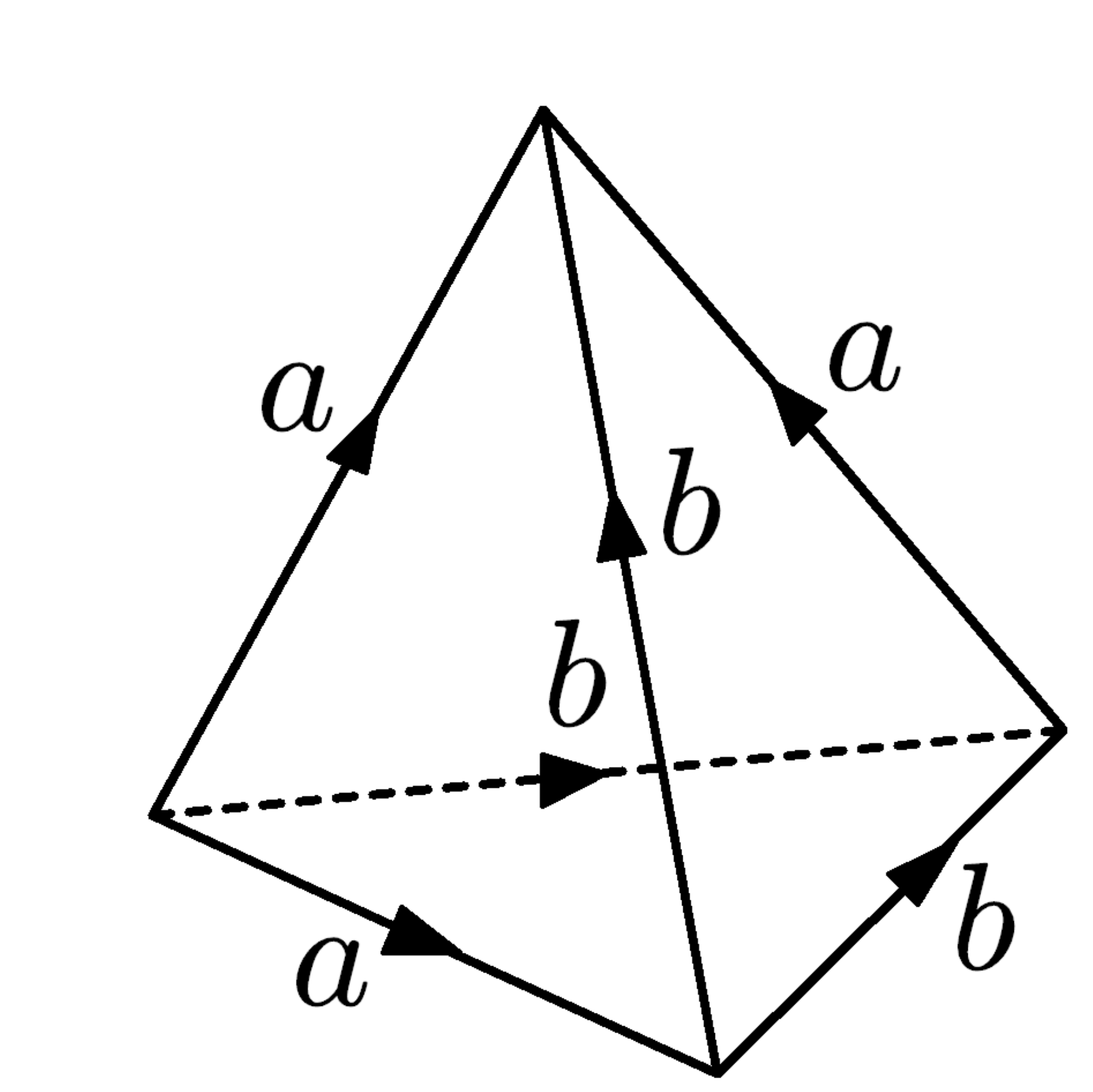}
\includegraphics[width=2.8cm]{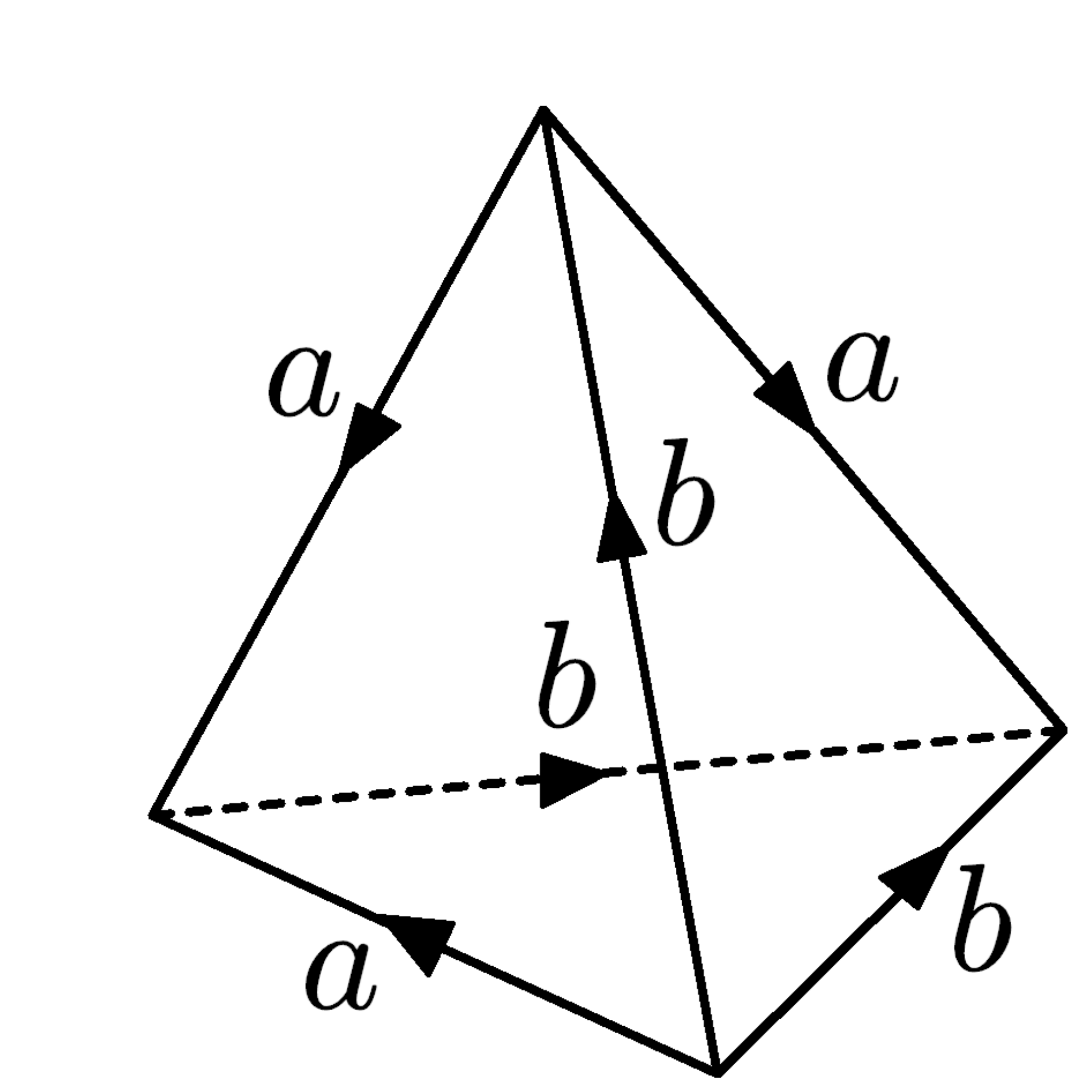} 
\caption{Minimal ideal triangulations of $m003$ and $m004$.}
\end{figure}

\noindent Figure 10.  
The 3-manifold $m004$ is the figure eight knot complement.  
Their hyperbolic volumes, Turaev-Viro invariants and homology groups are as follows:
$${\rm Vol}(m003) = {\rm Vol}(m004) \approx 2.02988,$$
$$TV(m003) = \sum_{(a,a,b),(a,b,b) \in adm} w_a w_b \begin{vmatrix} a & a & b \\ a & b & b \end{vmatrix} \begin{vmatrix} a & a & b \\ a & b & b \end{vmatrix} = TV(m004),$$
$$H_1(m003 ;\mathbb{Z}) = \mathbb{Z} \oplus \mathbb{Z}_5,\quad H_1(m004 ; \mathbb{Z}) = \mathbb{Z}.$$

We show that $m003$ and $m004$ have distinct generalized DW invariants.  

First we calculate the generalized DW invariant of $m004$.
The minimal ideal triangulation of $m004$ admits the local order shown in Figure 10.  
Identify the labels of edges with the colors of edges.  
By the left front face of the left ideal tetrahedron of $m004$ shown in Figure 10, $a=ba$.  
By the right front face of the left ideal tetrahedron of $m004$, $b=ab$.  
Hence $a=b=1 \in G$, which implies $m004$ has only a trivial coloring.
Therefore, for any finite group $G$ and its any normalized 3-cocycle $\alpha$, 
$$Z(m004) = 1.$$

On the other hand, the minimal ideal triangulation of $m003$ shown in Figure 10 does not admit a local order.  
Then we apply Theorem 3.8 to the ideal triangulation of $m003$.  
In order to assign a local order, transform the ideal triangulation of $m003$ by positive (2,3)-Pachner moves.

\hspace{0.8cm}
\includegraphics[width=2.2cm]{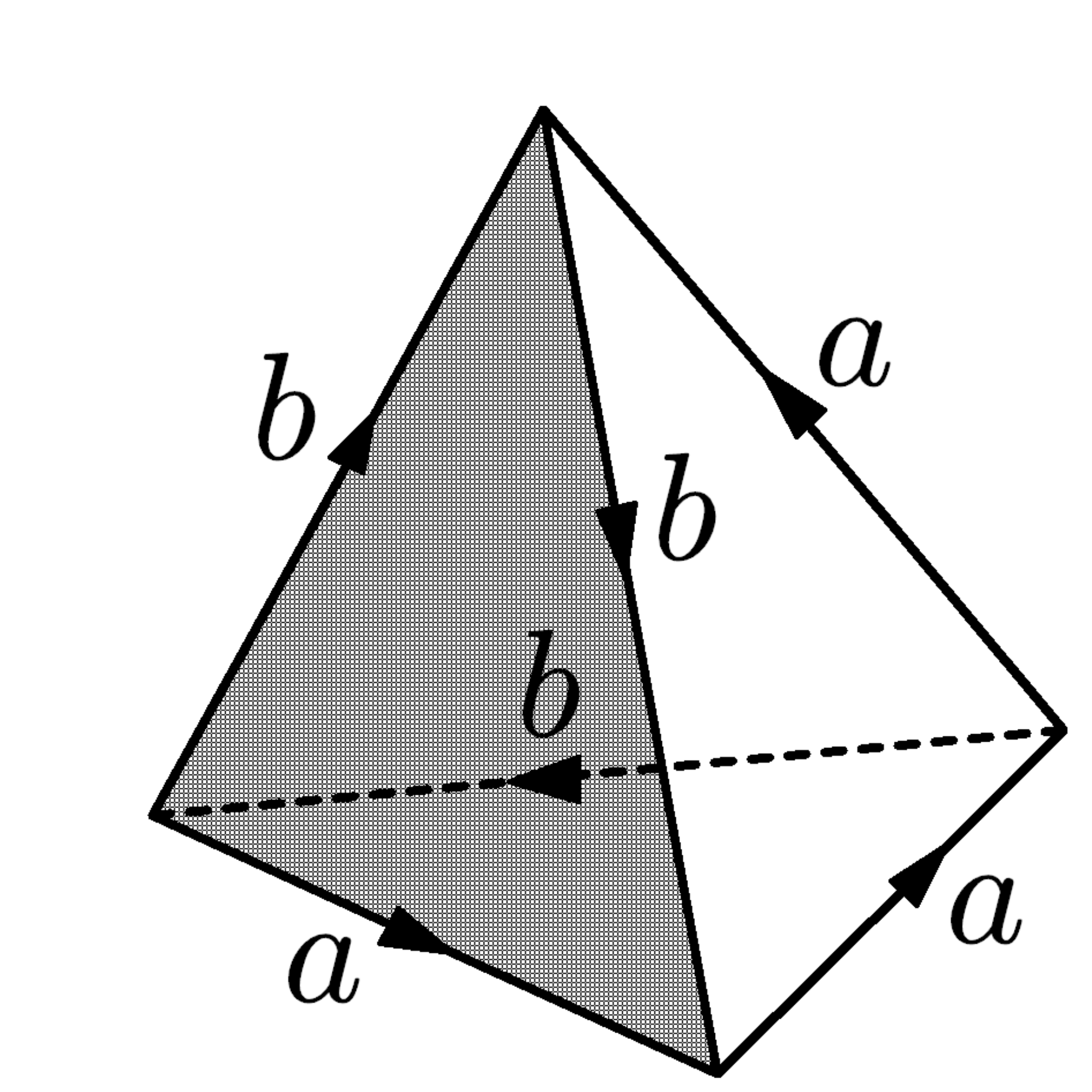}
\includegraphics[width=2.2cm]{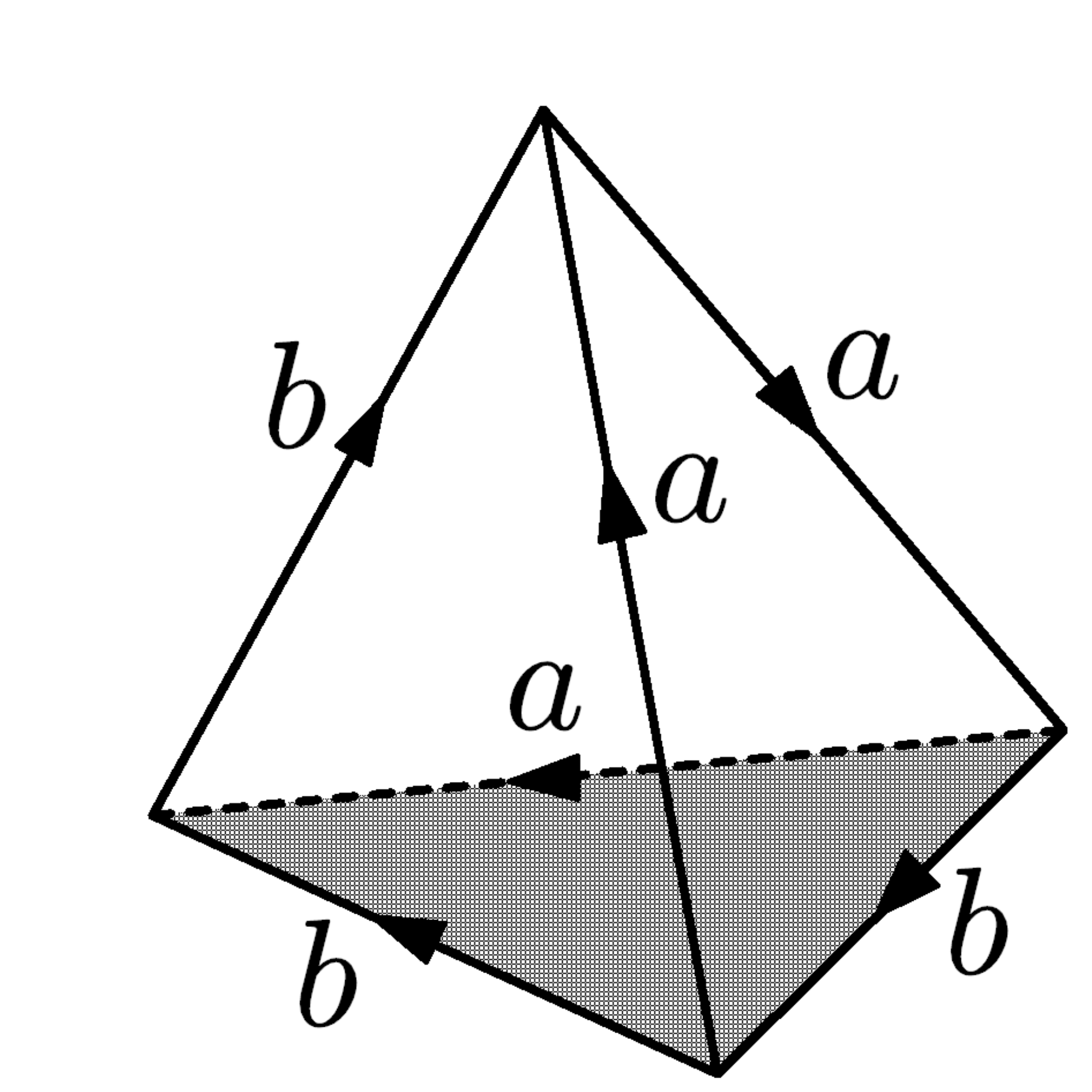}

\vspace{0.2cm}

\includegraphics[width=0.8cm]{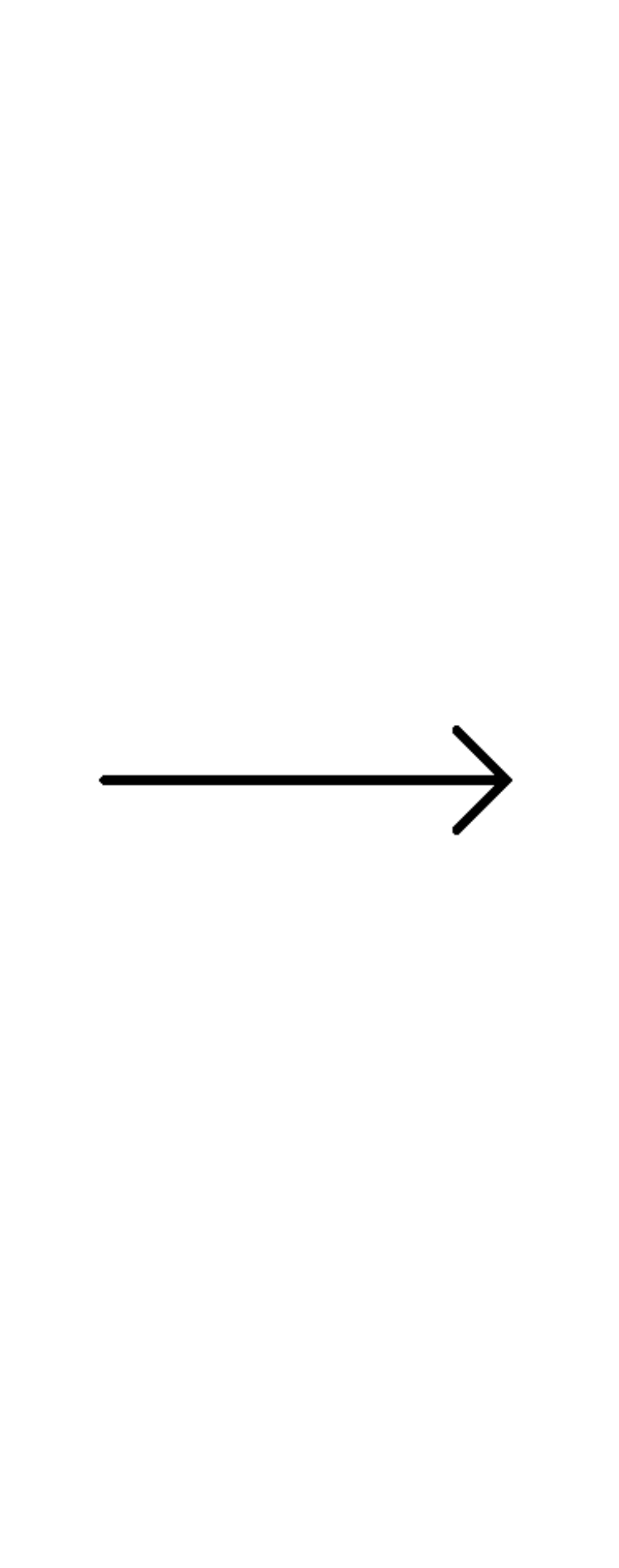}
\includegraphics[width=2.2cm]{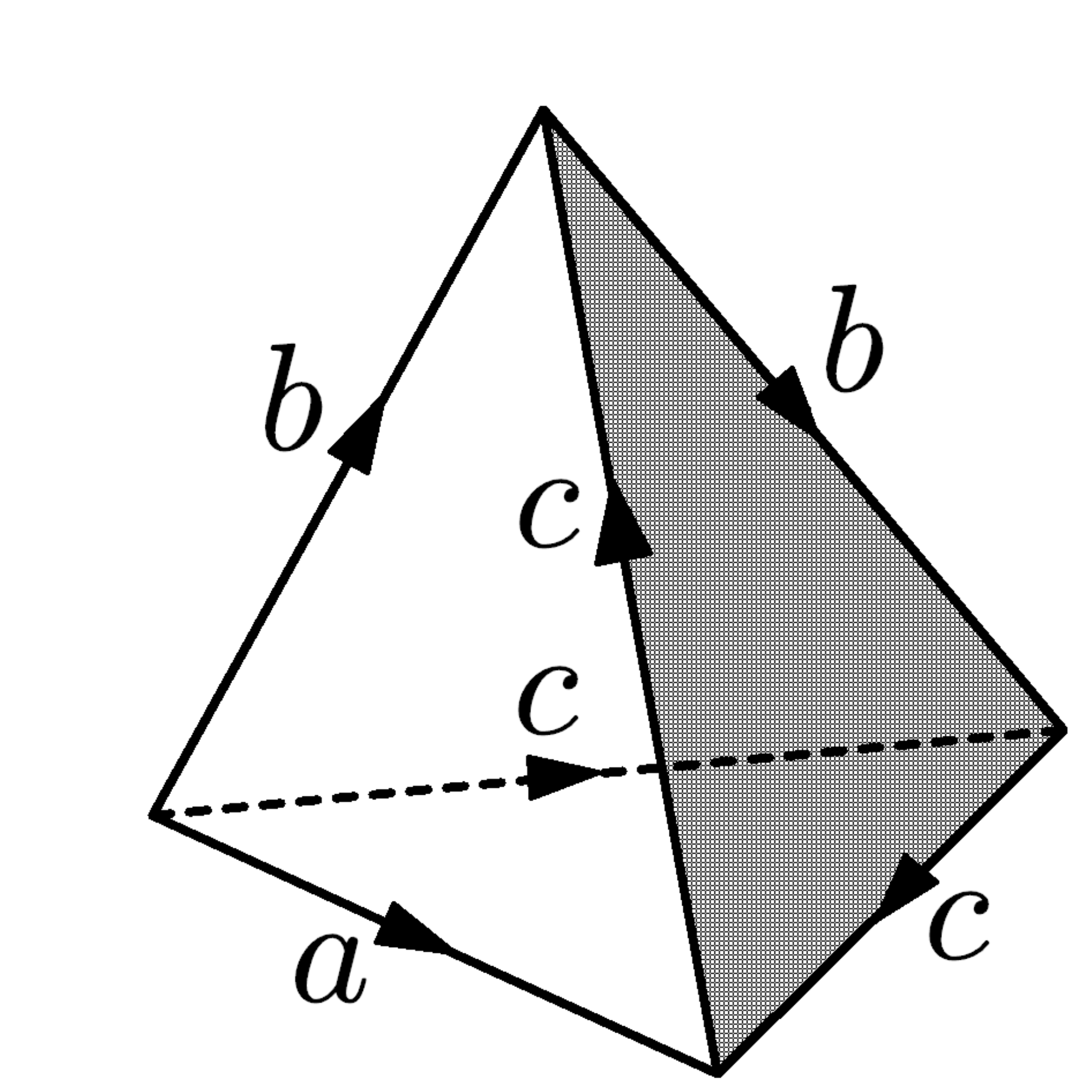}
\includegraphics[width=2.2cm]{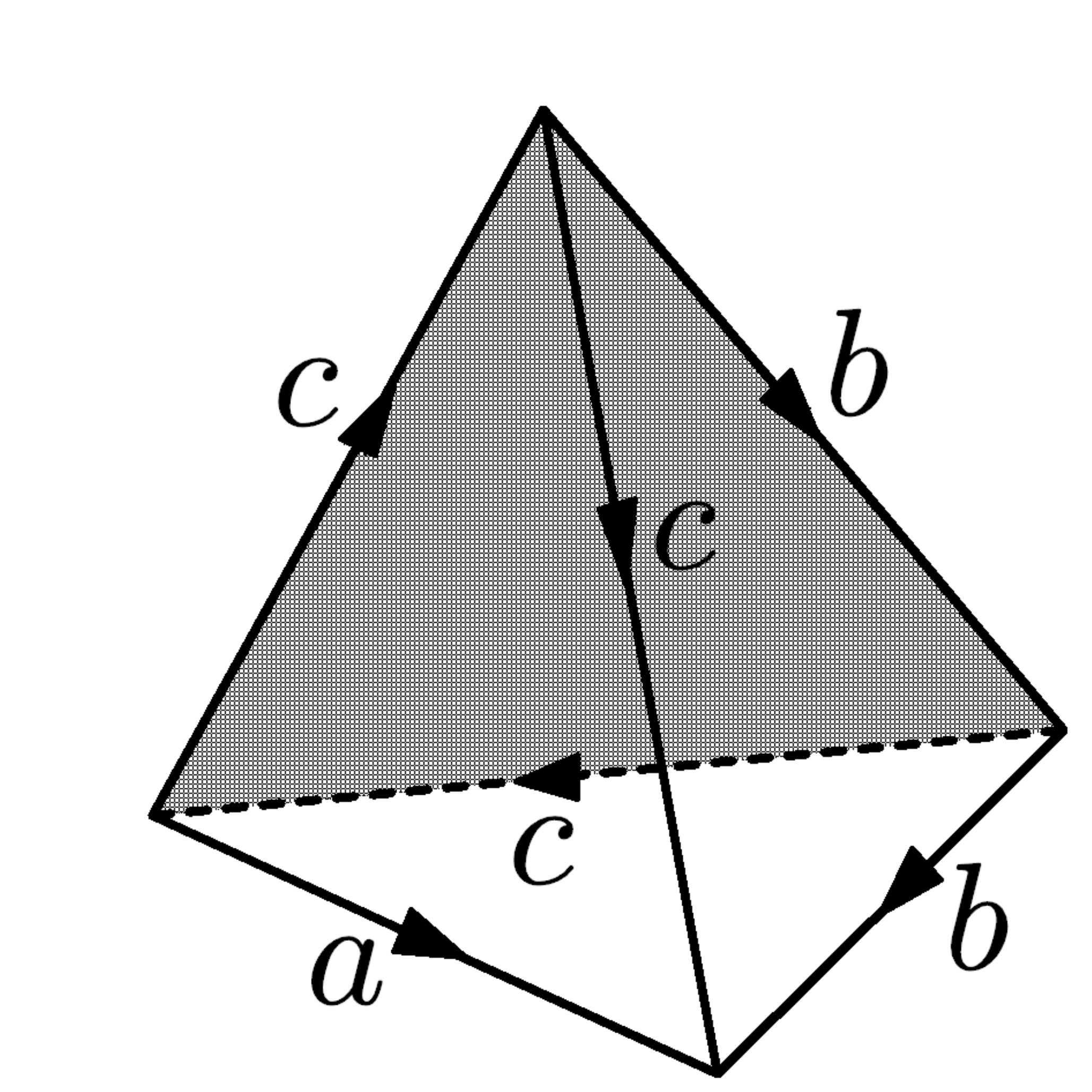}
\includegraphics[width=2.2cm]{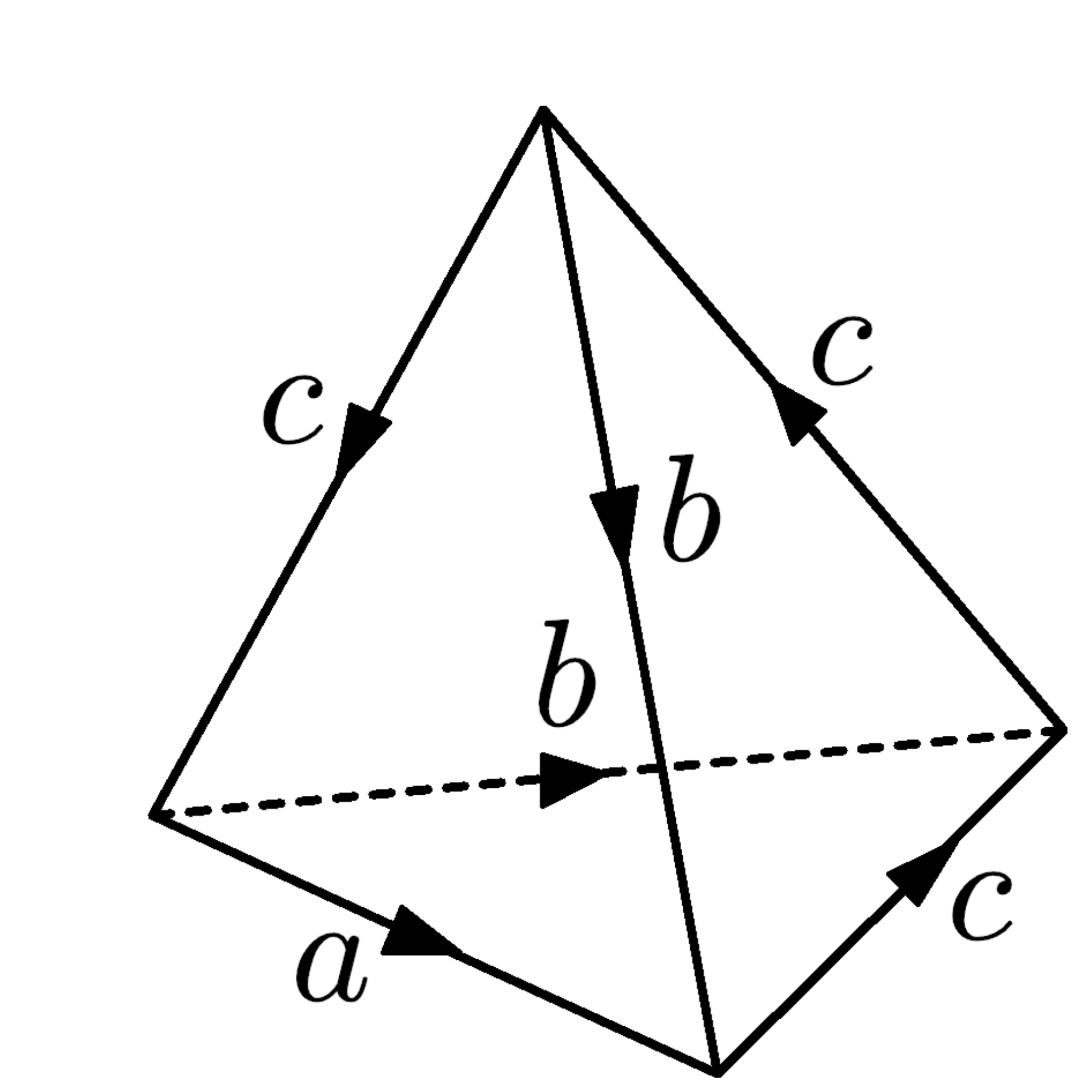}

\vspace{0.2cm}

\includegraphics[width=0.8cm]{arrow1a2.pdf}
\includegraphics[width=2.2cm]{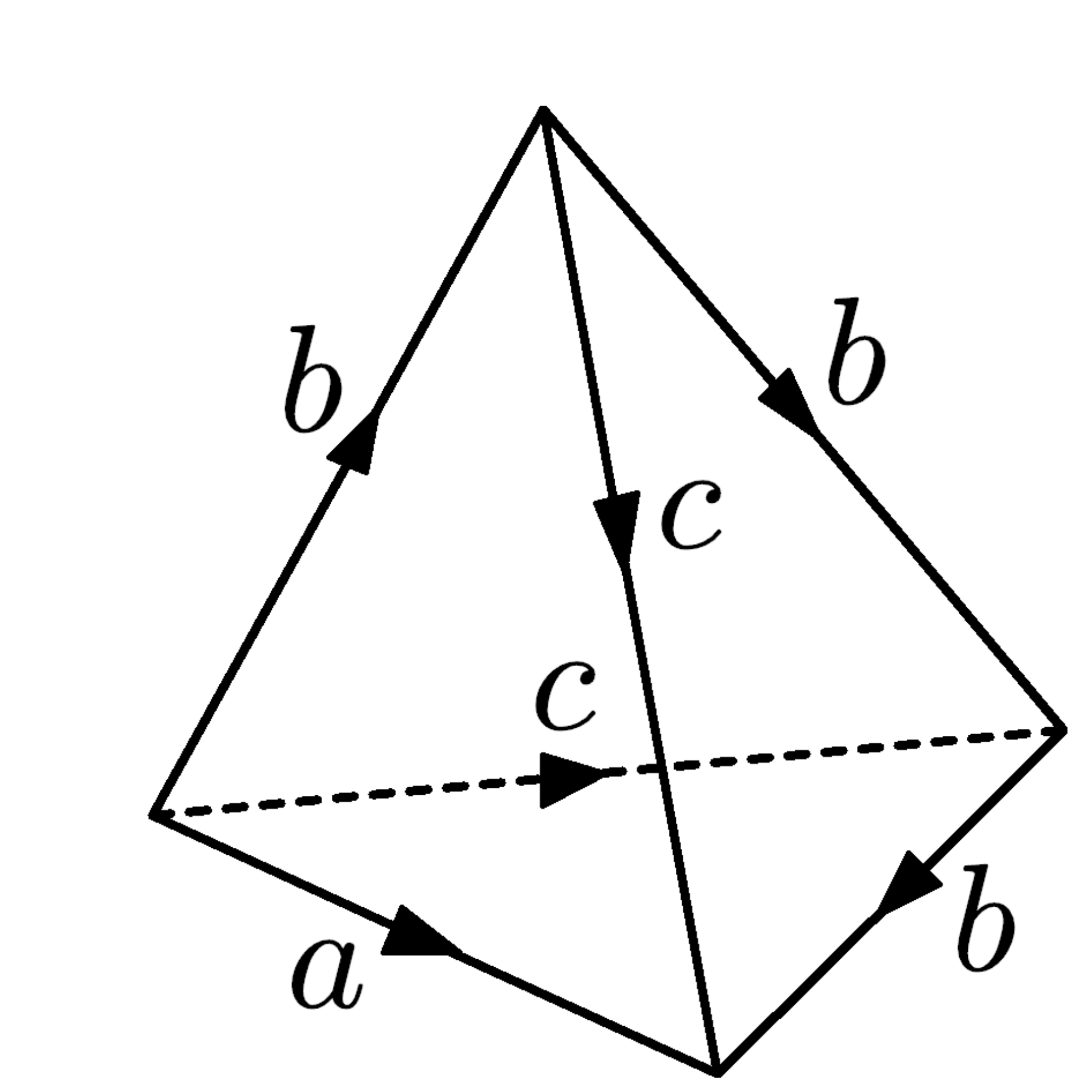}
\includegraphics[width=2.2cm]{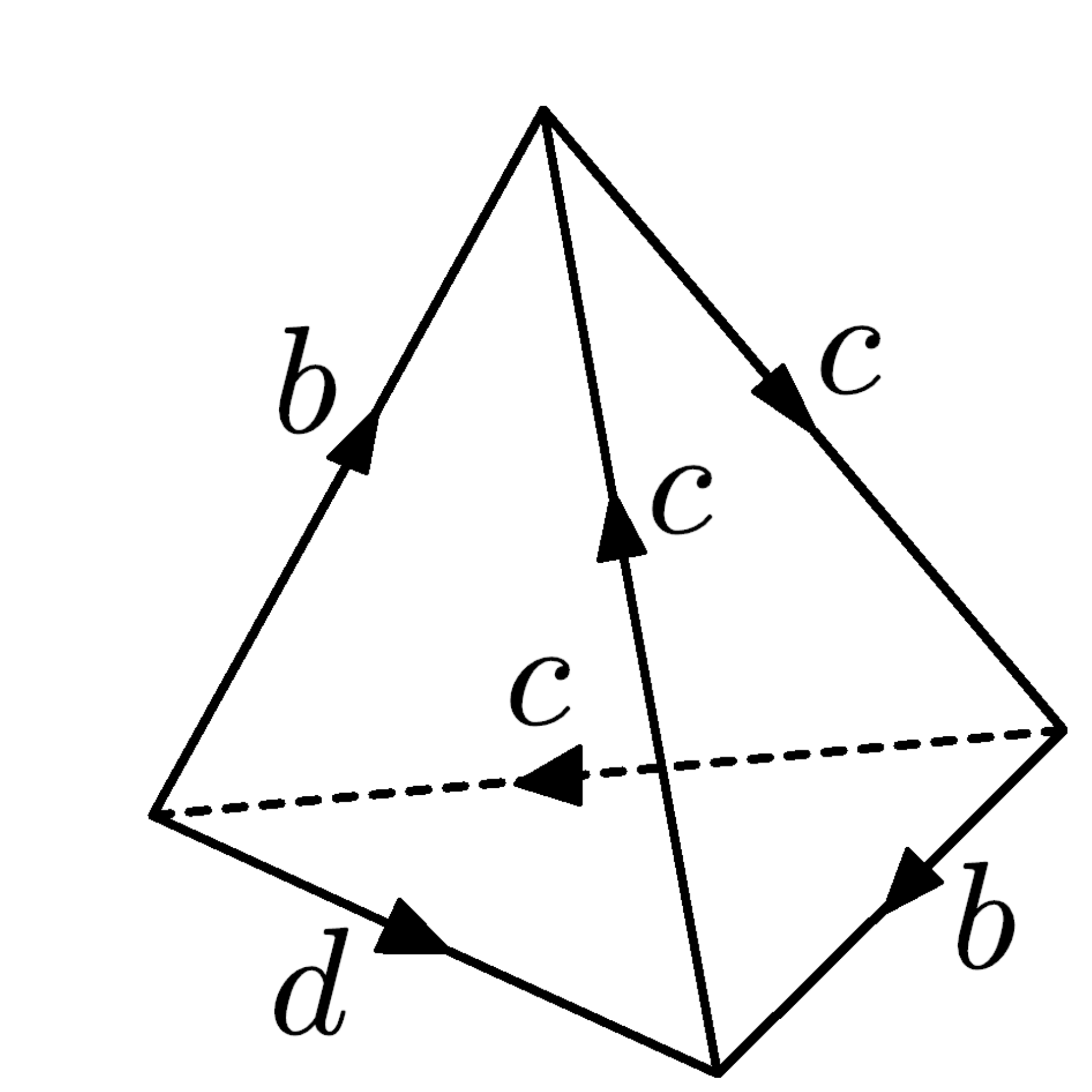}
\includegraphics[width=2.2cm]{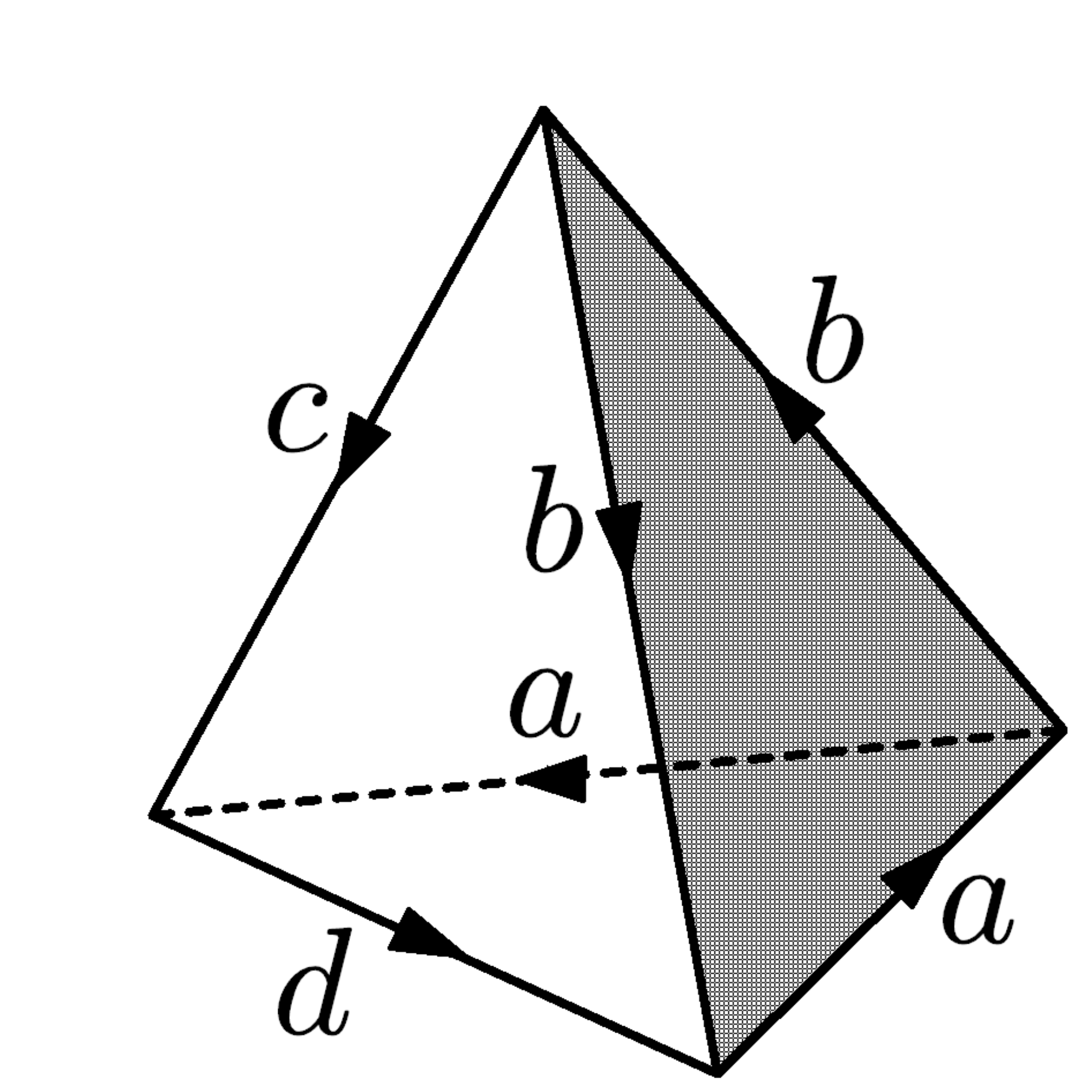}
\includegraphics[width=2.2cm]{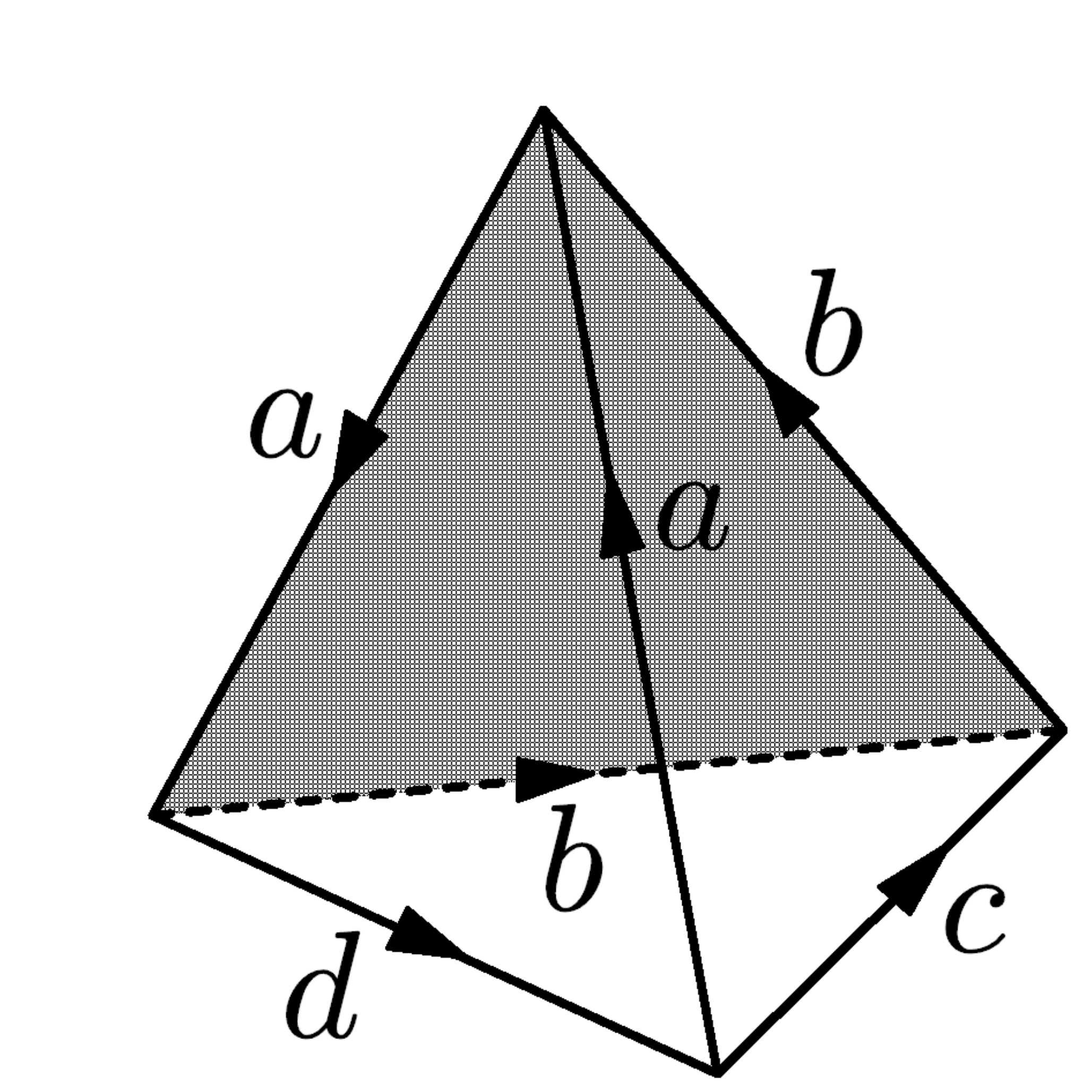}

\vspace{0.2cm}

\includegraphics[width=0.8cm]{arrow1a2.pdf}
\includegraphics[width=2.2cm]{tetra010.pdf}
\includegraphics[width=2.2cm]{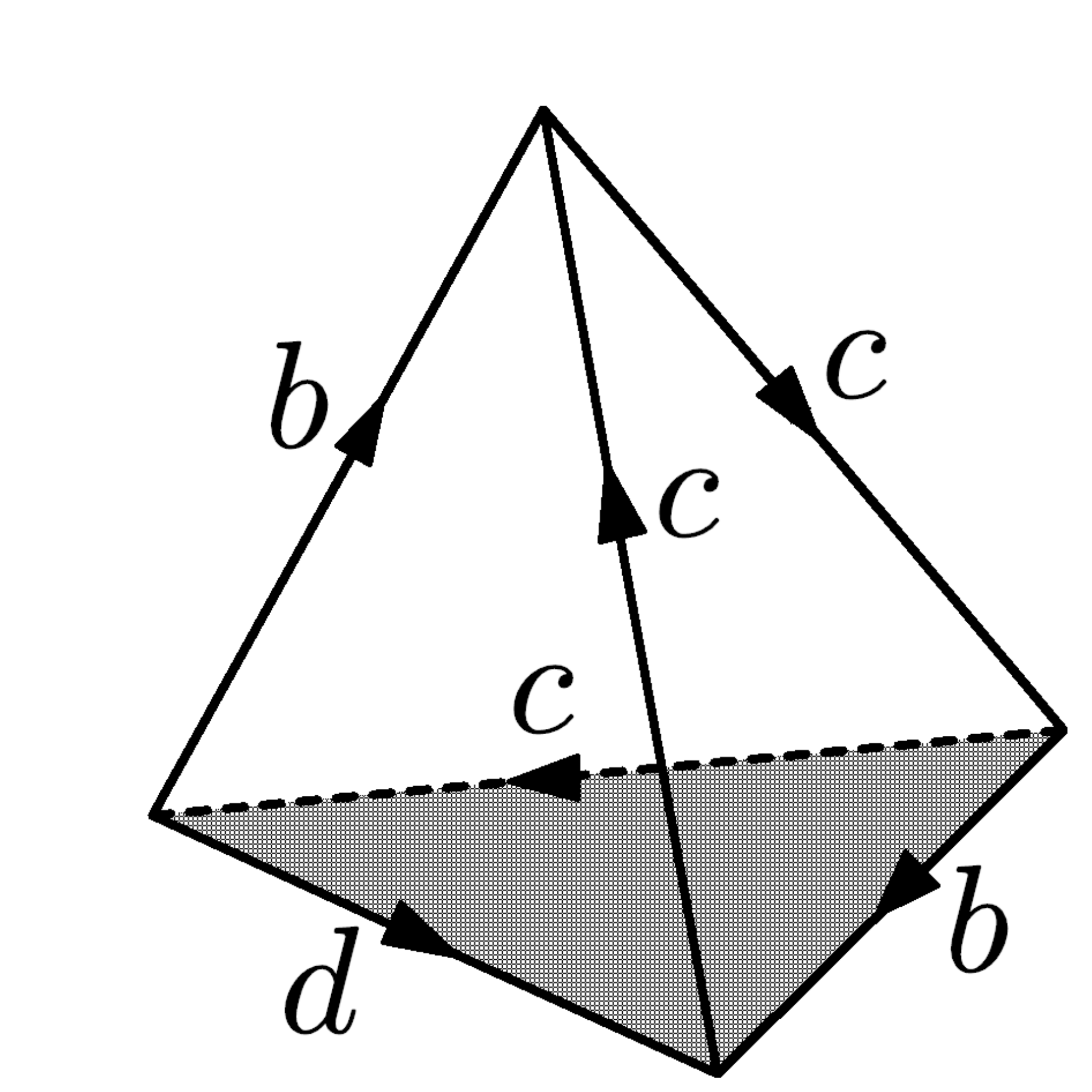}
\includegraphics[width=2.2cm]{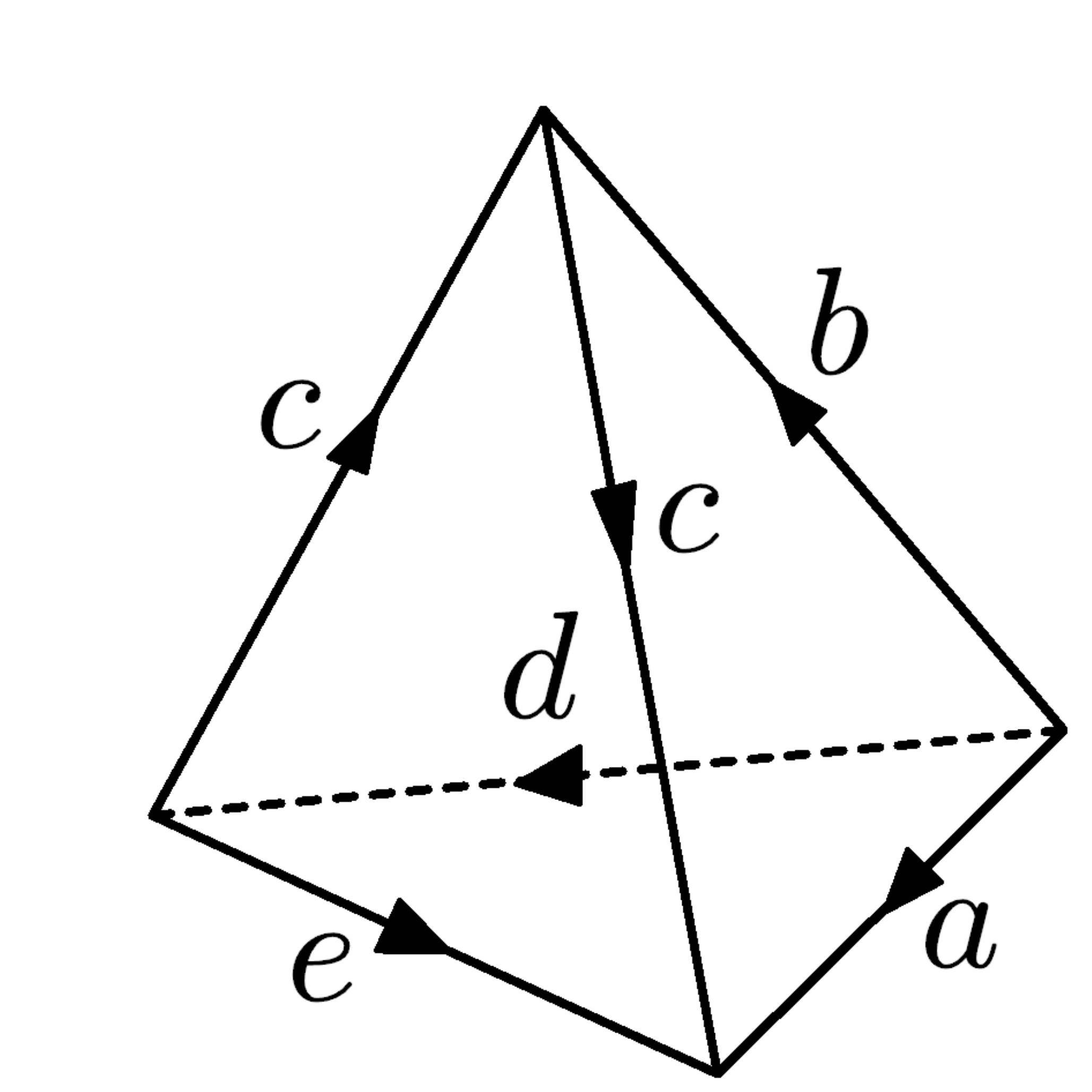}
\includegraphics[width=2.2cm]{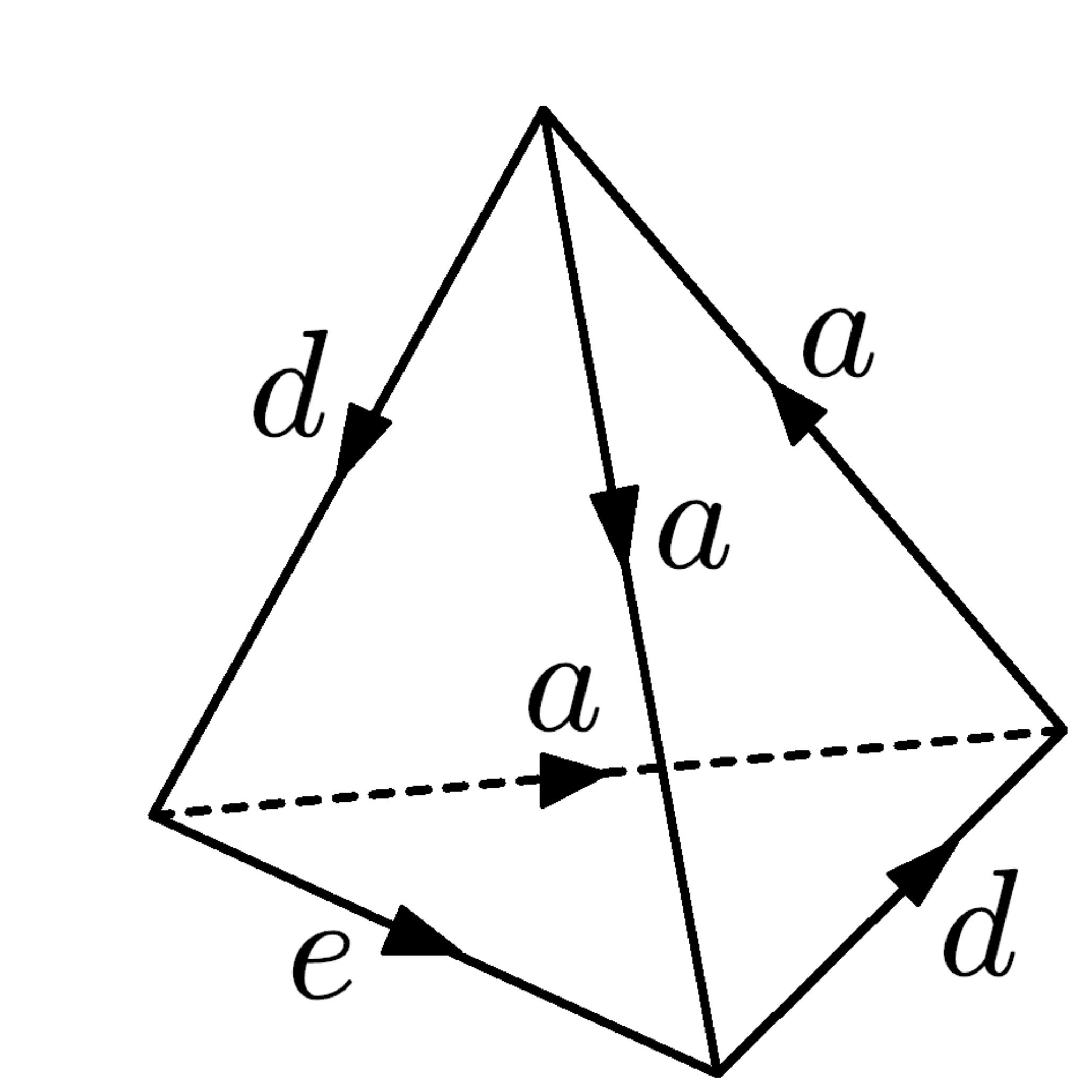}
\includegraphics[width=2.2cm]{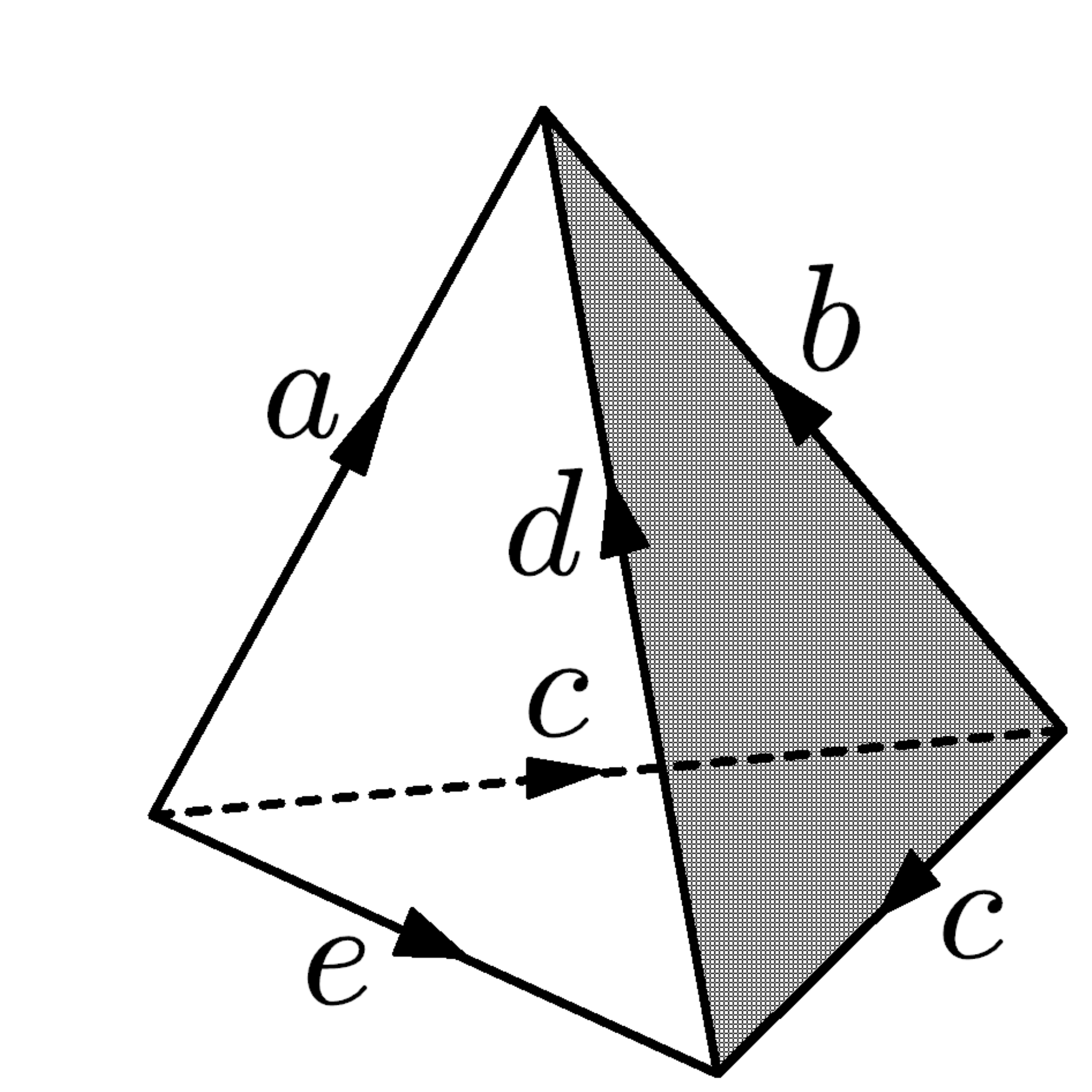}

\vspace{0.2cm}

\includegraphics[width=0.8cm]{arrow1a2.pdf}
\includegraphics[width=2.2cm]{tetra010.pdf}
\includegraphics[width=2.2cm]{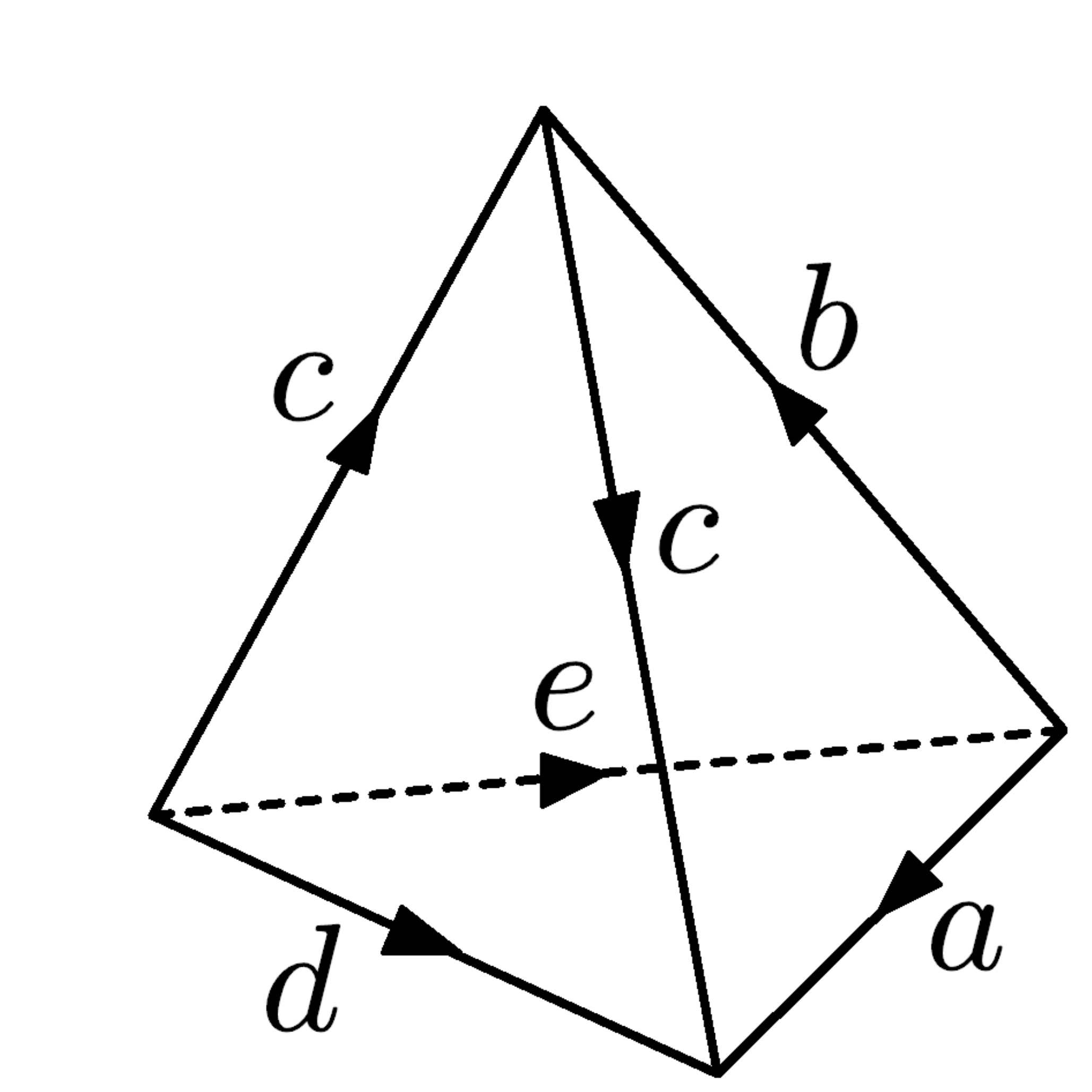}
\includegraphics[width=2.2cm]{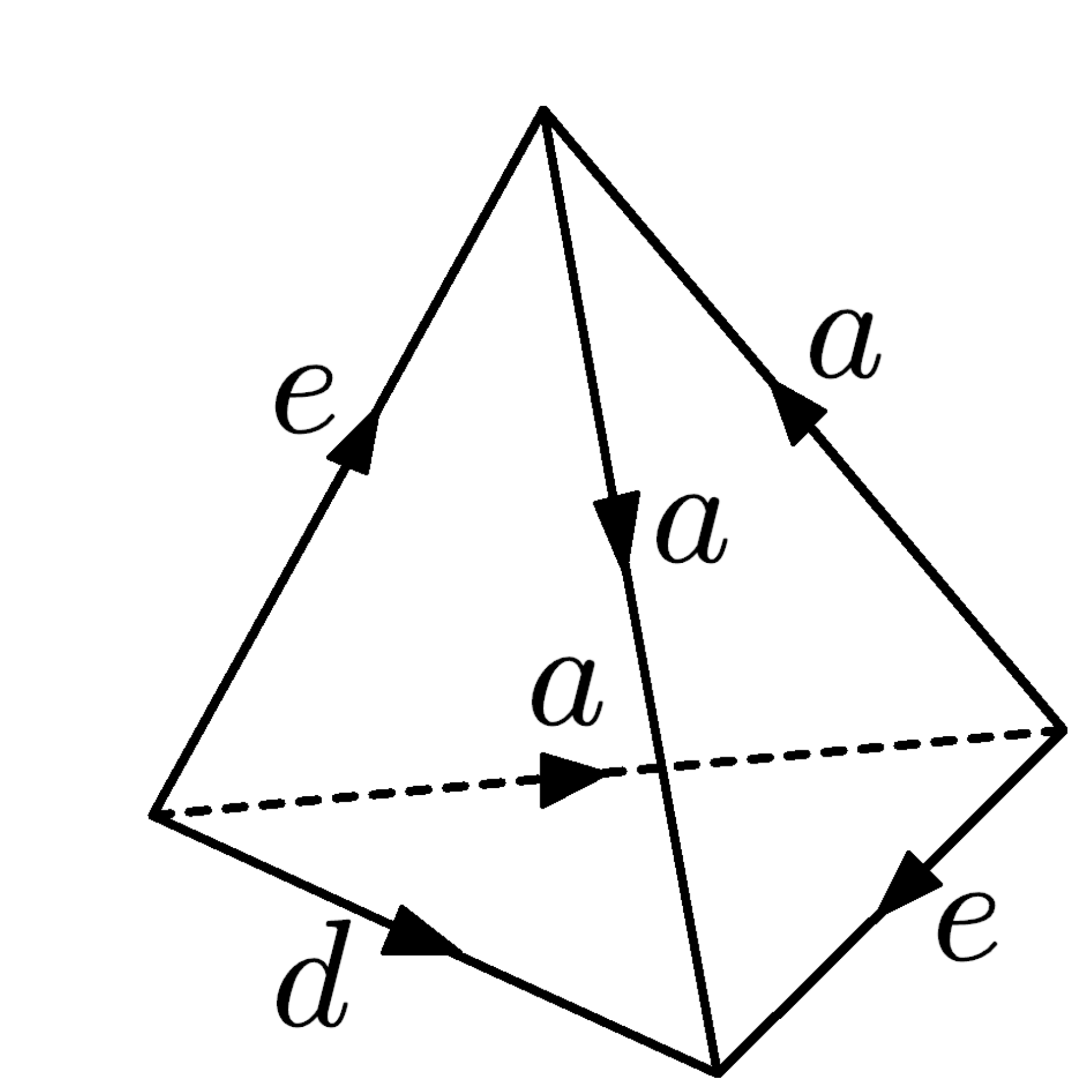}

\vspace{0.2cm}

\hspace{0.8cm}
\includegraphics[width=2.2cm]{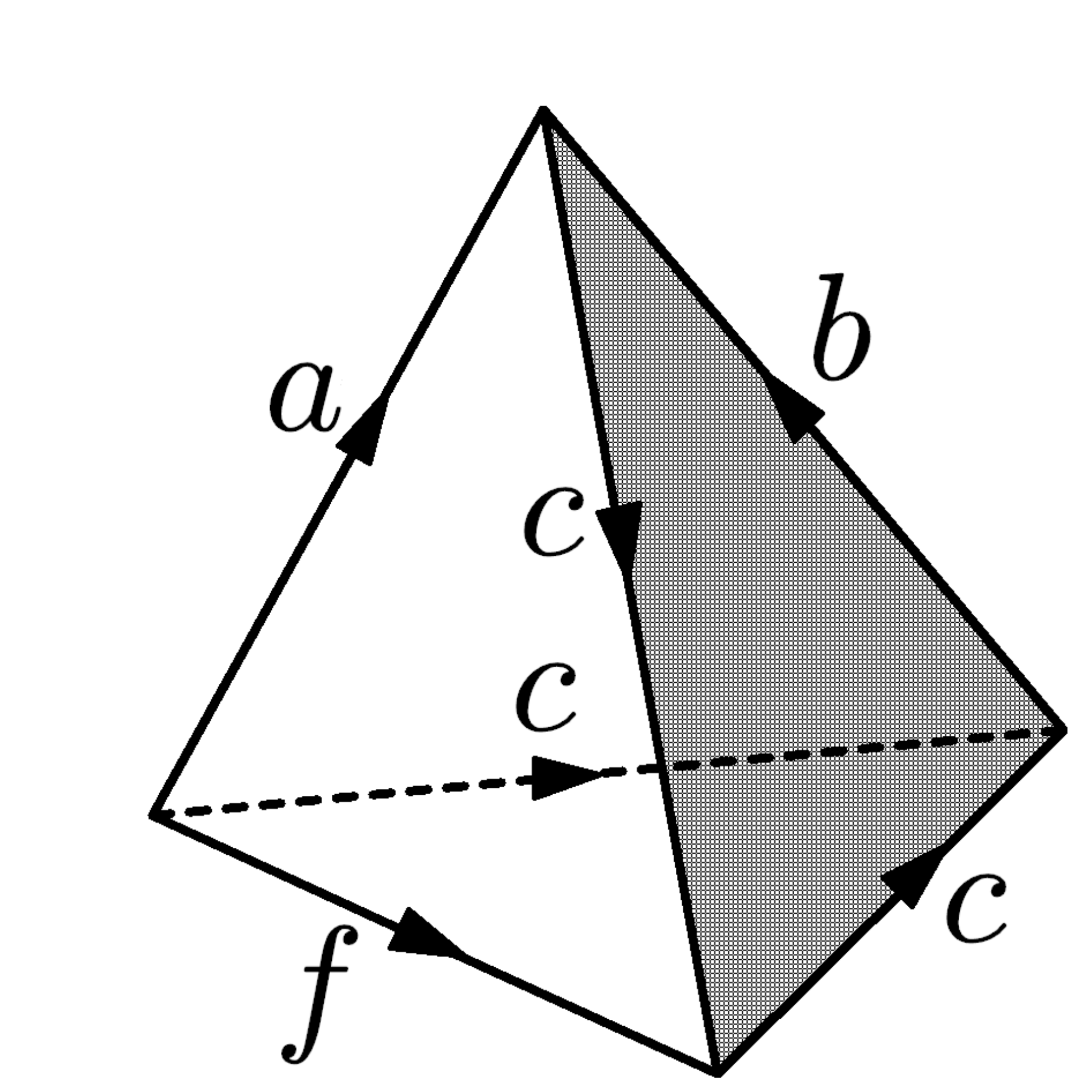}
\includegraphics[width=2.2cm]{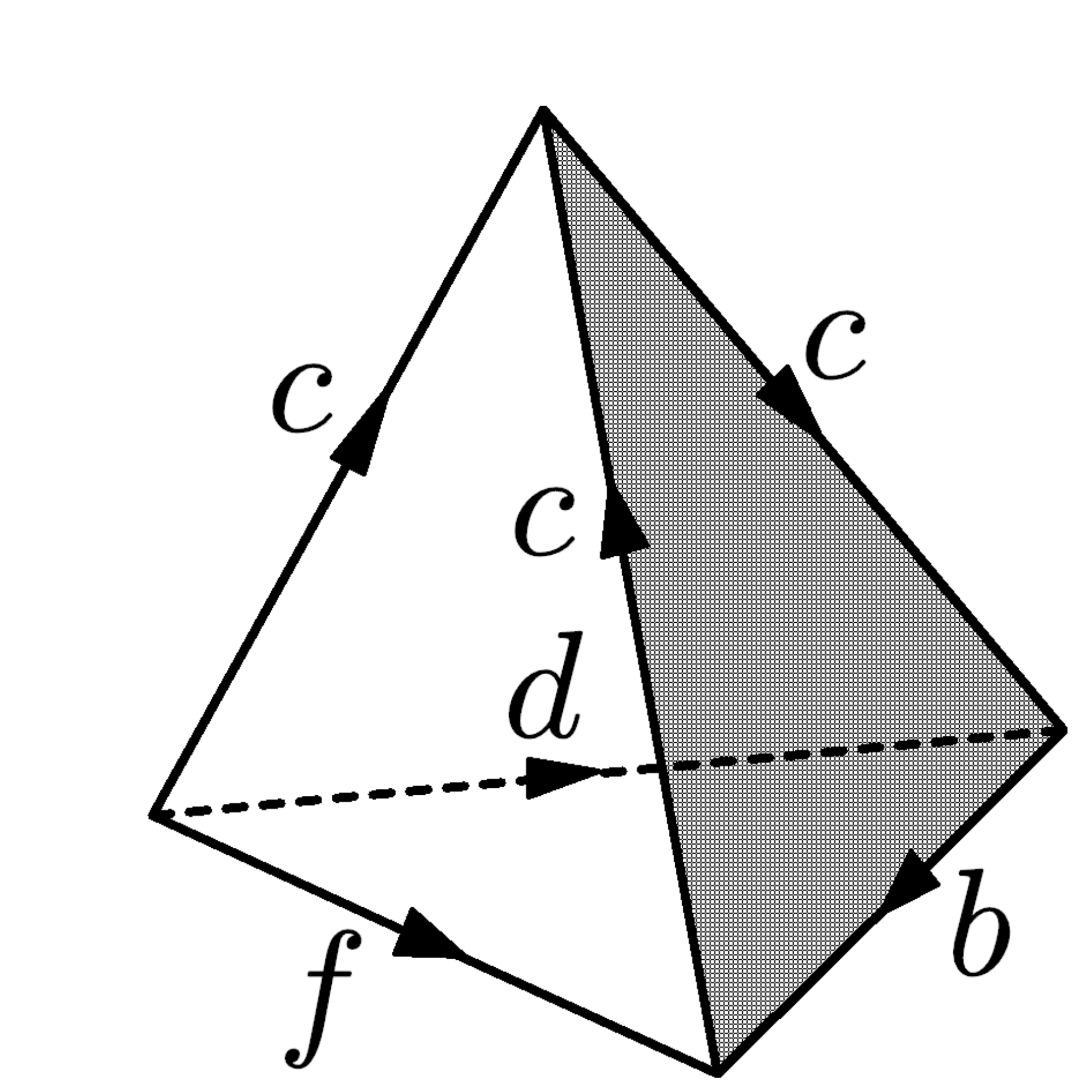}
\includegraphics[width=2.2cm]{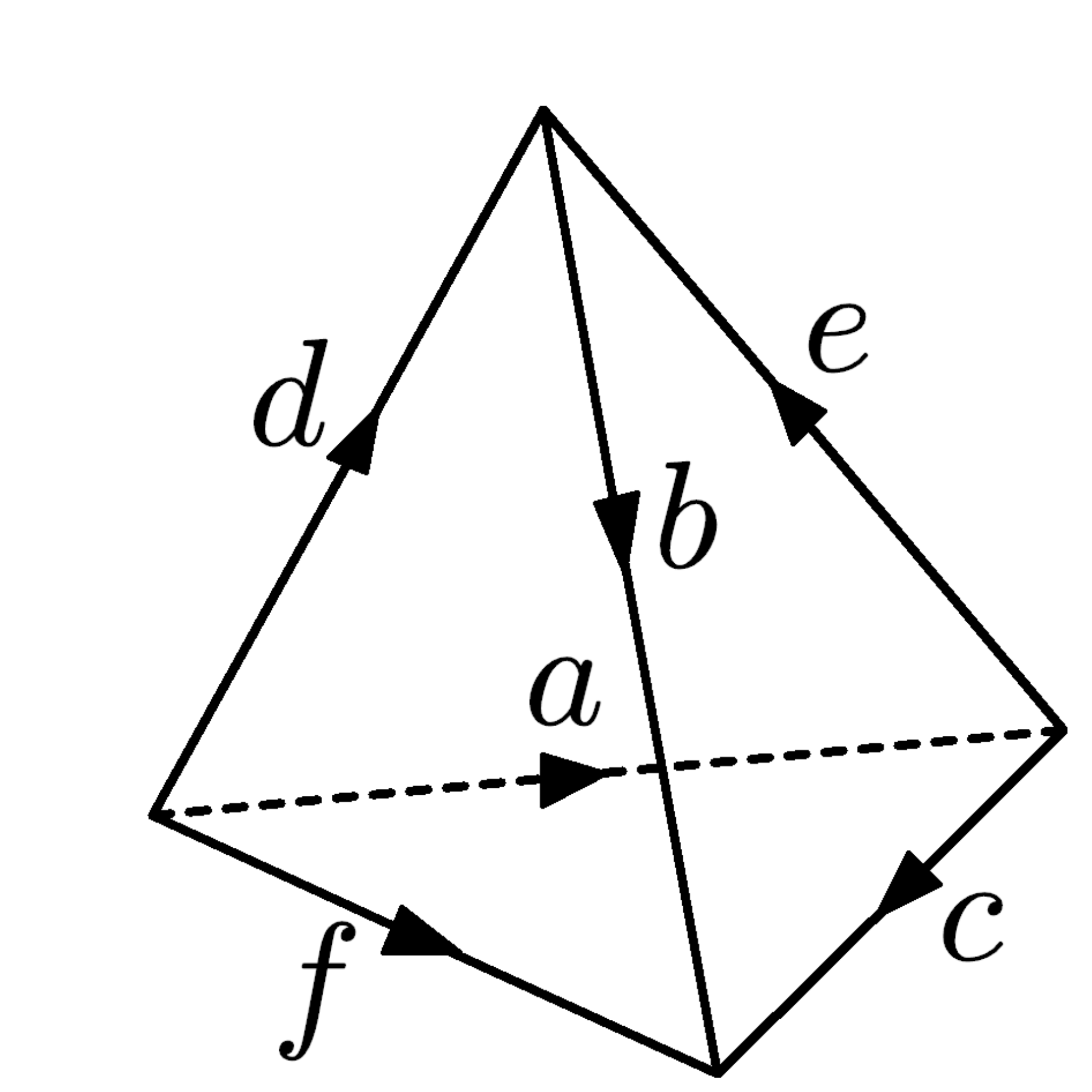}

\vspace{0.2cm}

\includegraphics[width=1cm]{arrow1a2.pdf}
\includegraphics[width=2.2cm]{tetra010.pdf}
\includegraphics[width=2.2cm]{tetra018.pdf}
\includegraphics[width=2.2cm]{tetra019.pdf}
\includegraphics[width=2.2cm]{tetra022.pdf} 

\vspace{0.2cm}

\hspace{2.3cm}
\includegraphics[width=2.2cm]{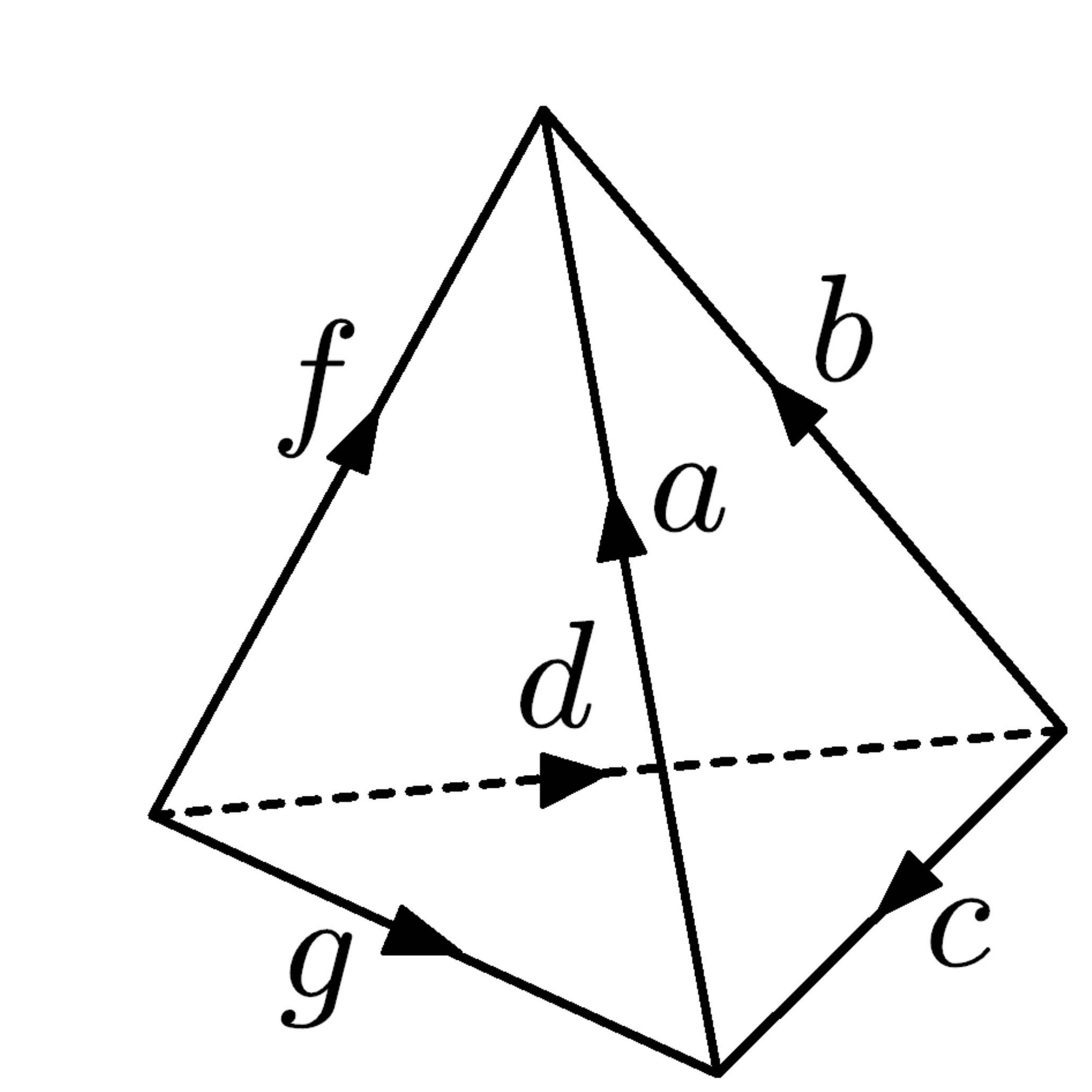}
\includegraphics[width=2.2cm]{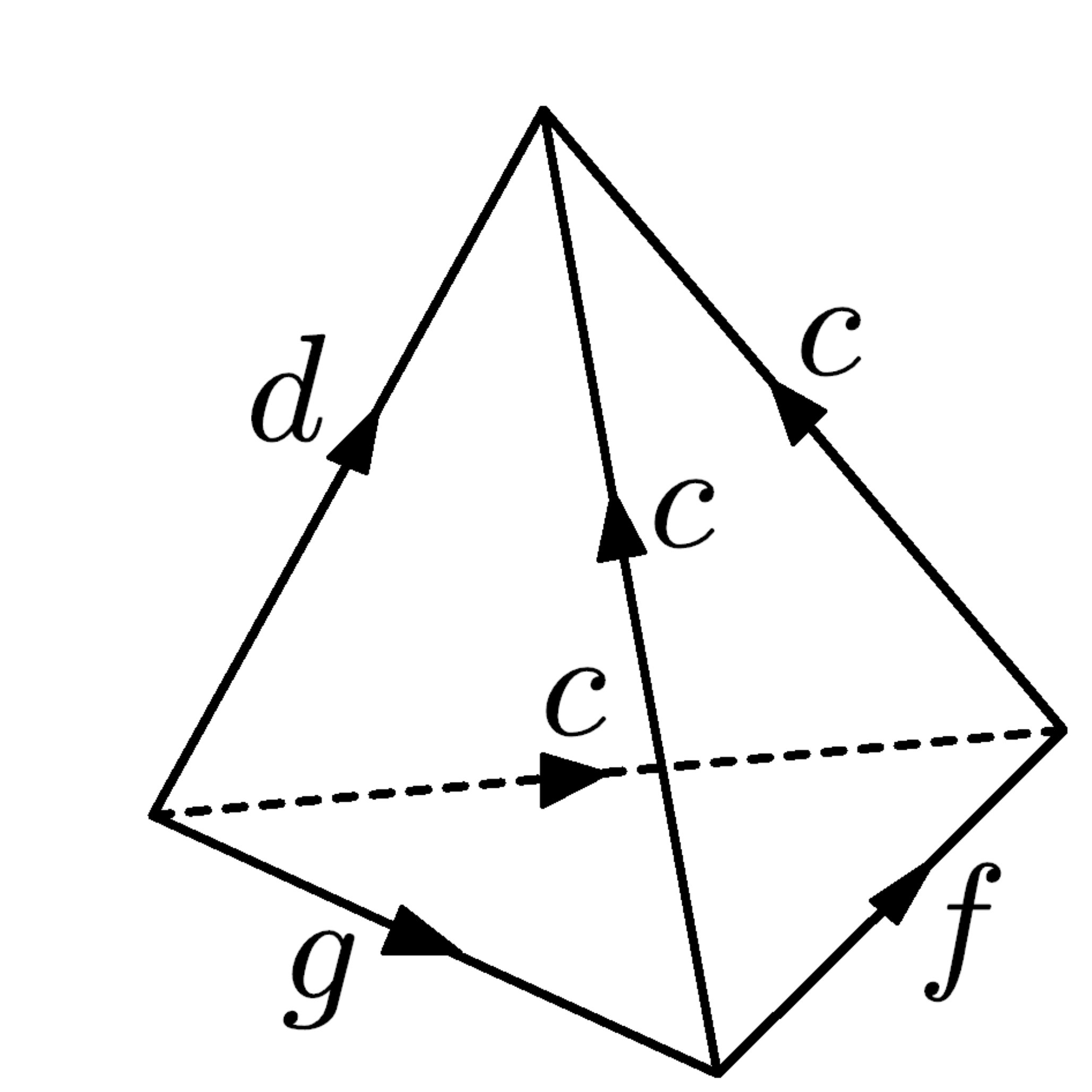}
\includegraphics[width=2.2cm]{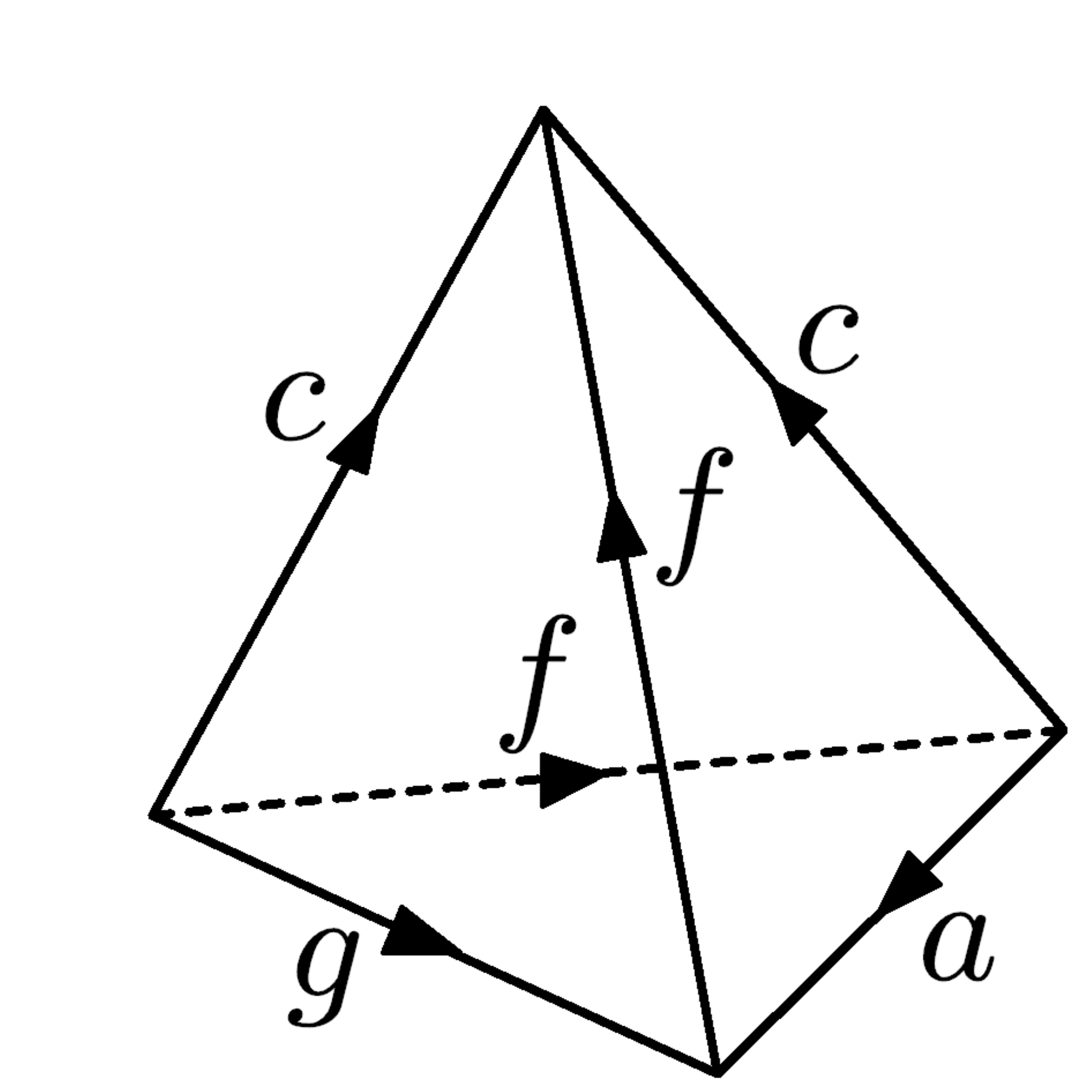}
\begin{figure}[h]
\caption{A sequence of (2,3)-Pachner moves for $m003$ to obtain a locally ordered ideal triangulation.}
\end{figure}

After positive (2,3)-Pachner moves five times, the ideal triangulation of $m003$ which consists of seven ideal tetrahedra admits the local order shown in Figure 11.  
The relations between the colors of edges are the following:
$$a=b^3,\; c=b^2,\; d=b^4,\; e=b,\; f=1,\; g=b^2,\; b^5=1.$$ 
$$Z(m003) = \sum_{b \in G, b^5=1} \alpha(b,b,b)^{-1} \alpha(b^2,b,b) \alpha(b^3,b^3,b^3)$$
$$\hspace{3.3cm} \times \alpha(b,b,b^3) \alpha(b,b^2,b^2) \alpha(b^2,b^3,b^2).$$

In order to confirm $Z(m003) \neq Z(m004)$, we calculate $Z(m003)$ for 
$G = \mathbb{Z}_5$ and a generator $\alpha$ of $H^3(\mathbb{Z}_5, U(1)) \cong  \mathbb{Z}_5$.  
$$Z(m003) = \frac{1}{2} (5+\sqrt{5}+i\sqrt{10+2\sqrt{5}}).$$
Hence the generalized DW invariants distinguish $m003$ and $m004$. 

\begin{figure}
\centering
$m006$ \hspace{0.6cm} 
\includegraphics[width=2.8cm]{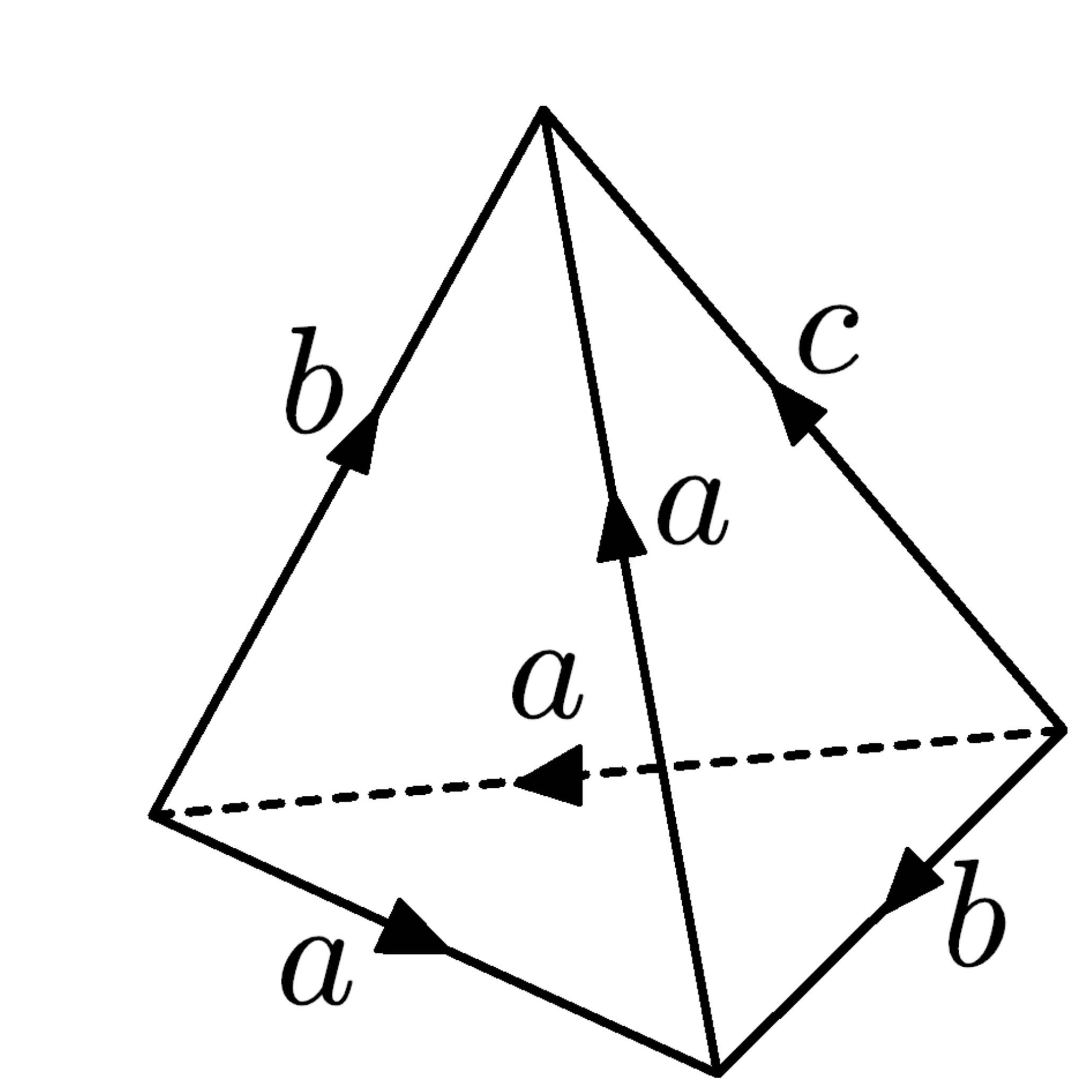}
\includegraphics[width=2.8cm]{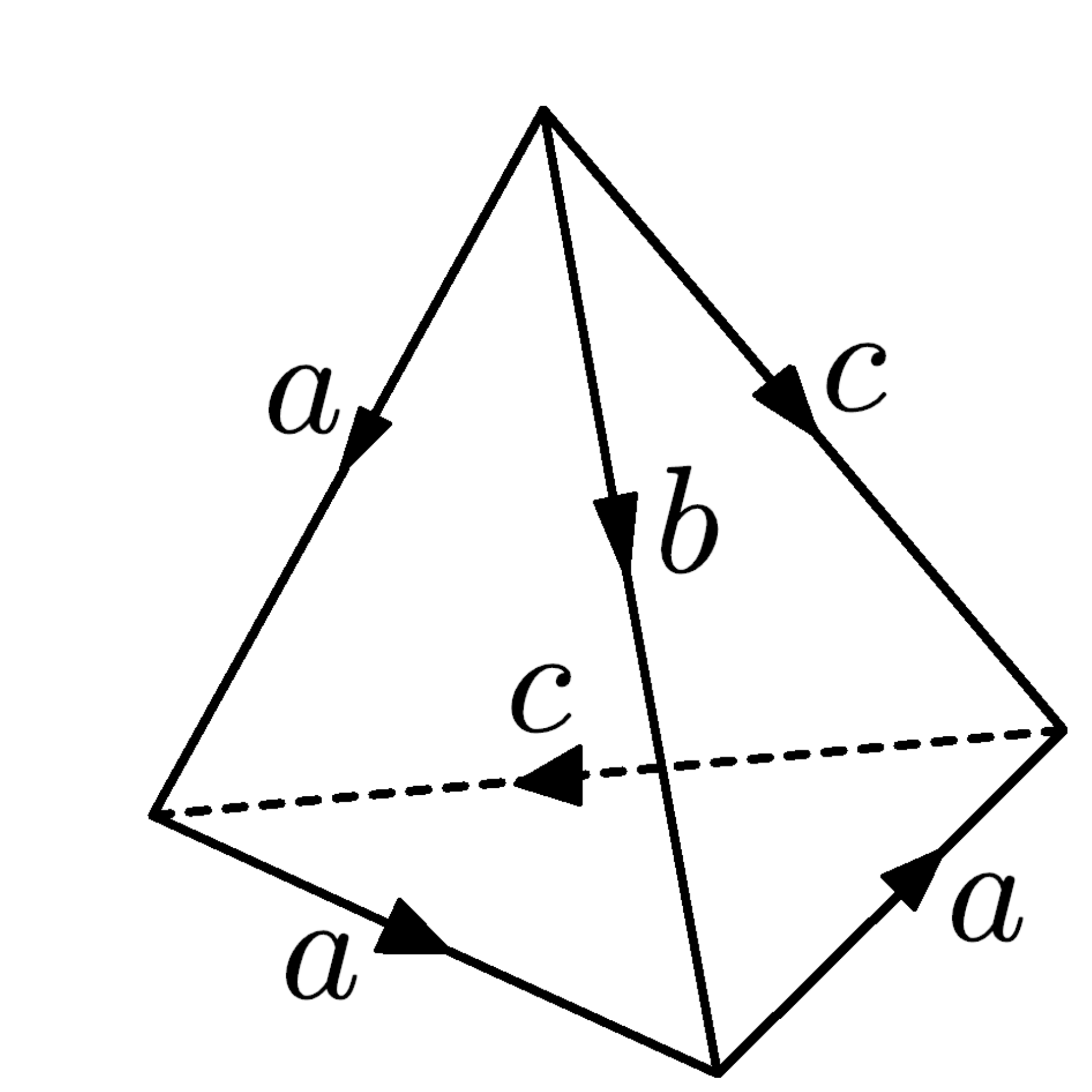} 
\includegraphics[width=2.8cm]{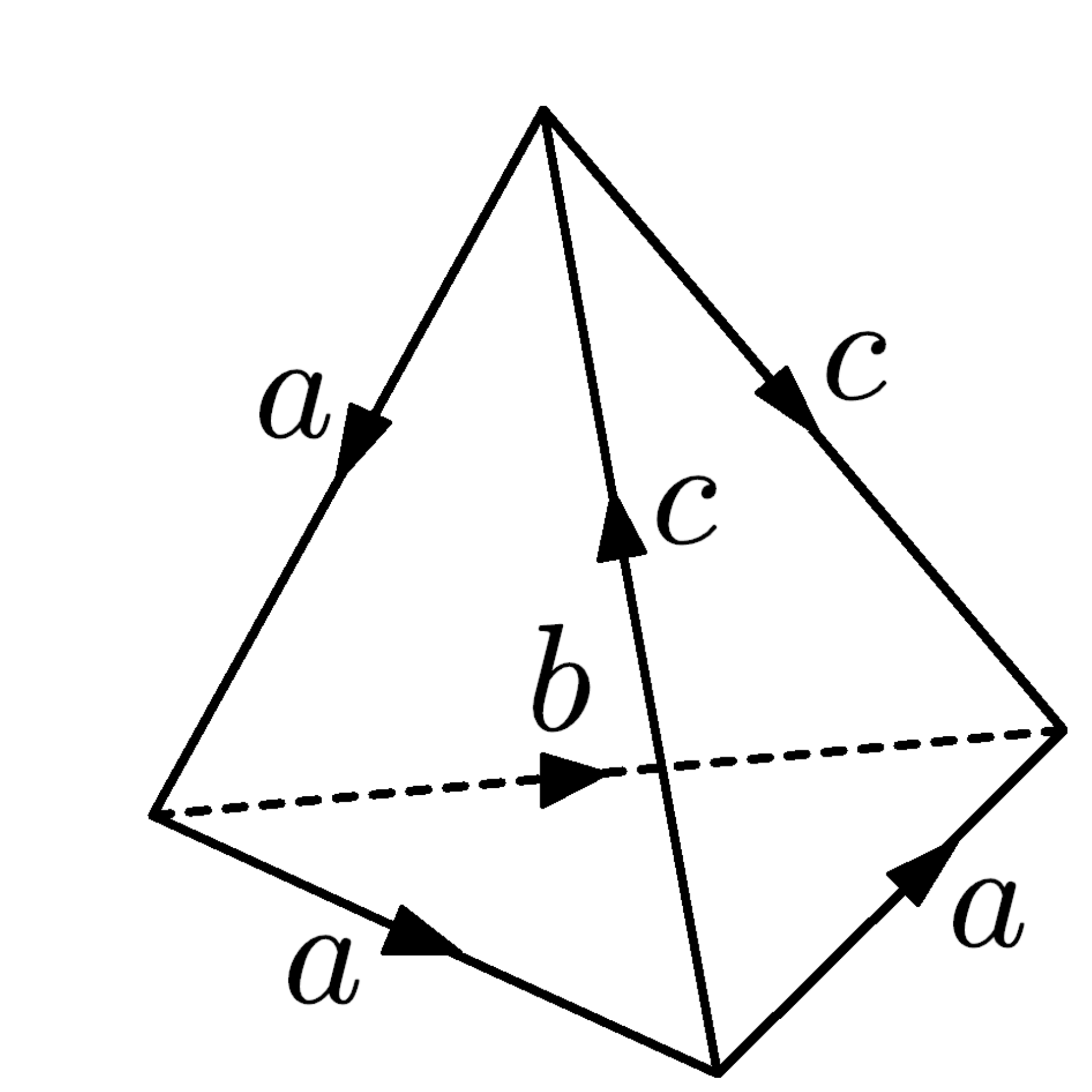}

\vspace{0.5cm}

$m007$ \hspace{0.6cm}
\includegraphics[width=2.8cm]{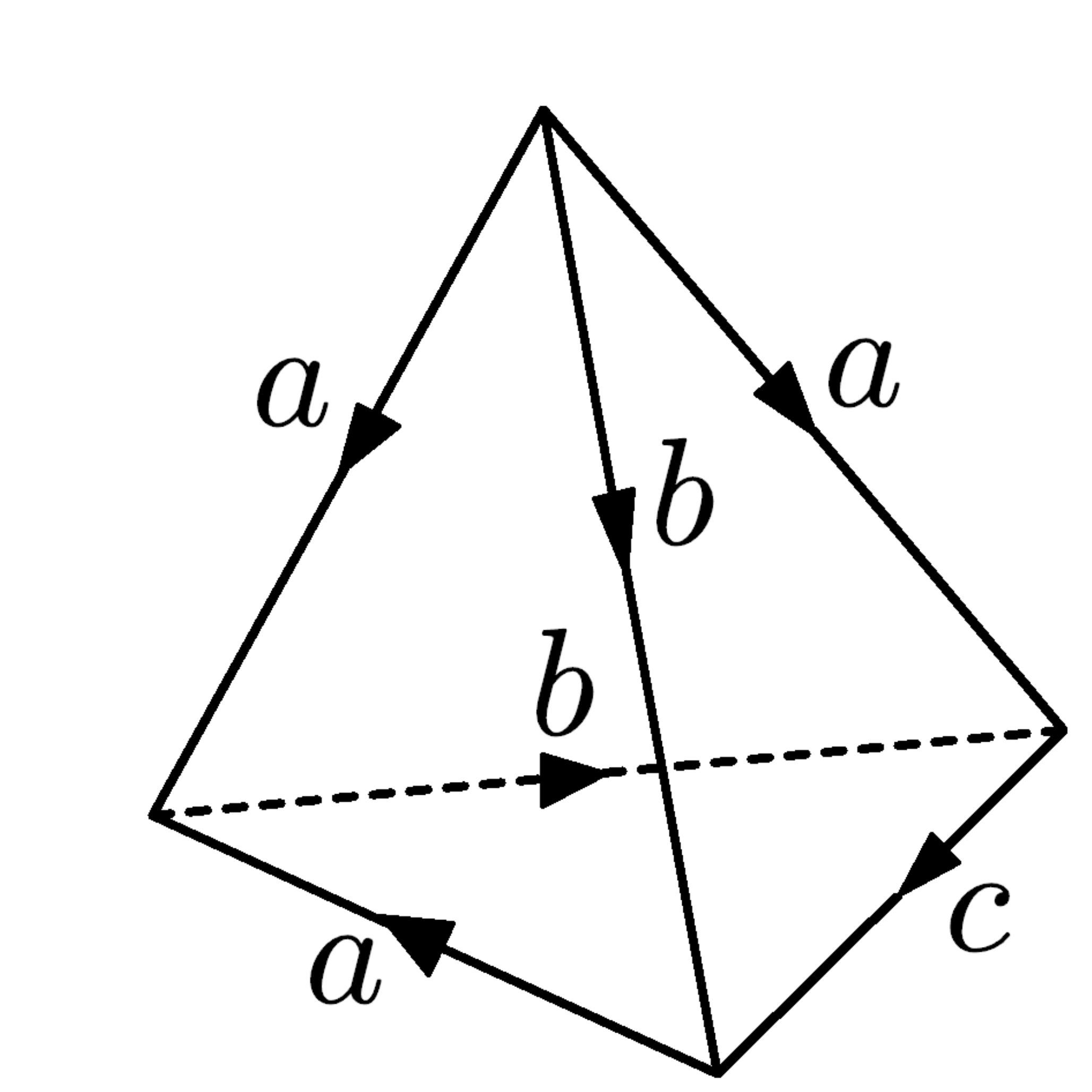} 
\includegraphics[width=2.8cm]{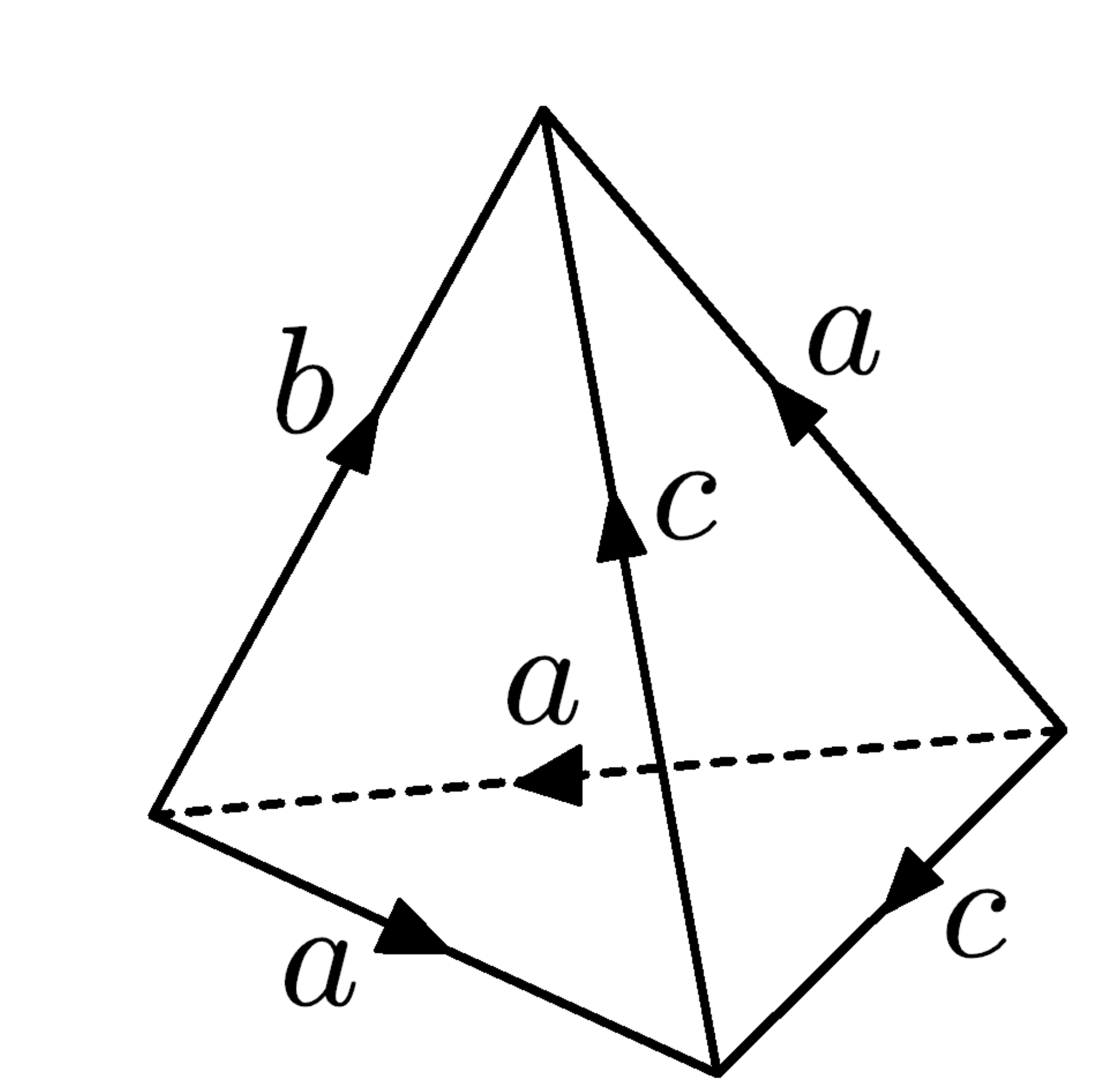}
\includegraphics[width=2.8cm]{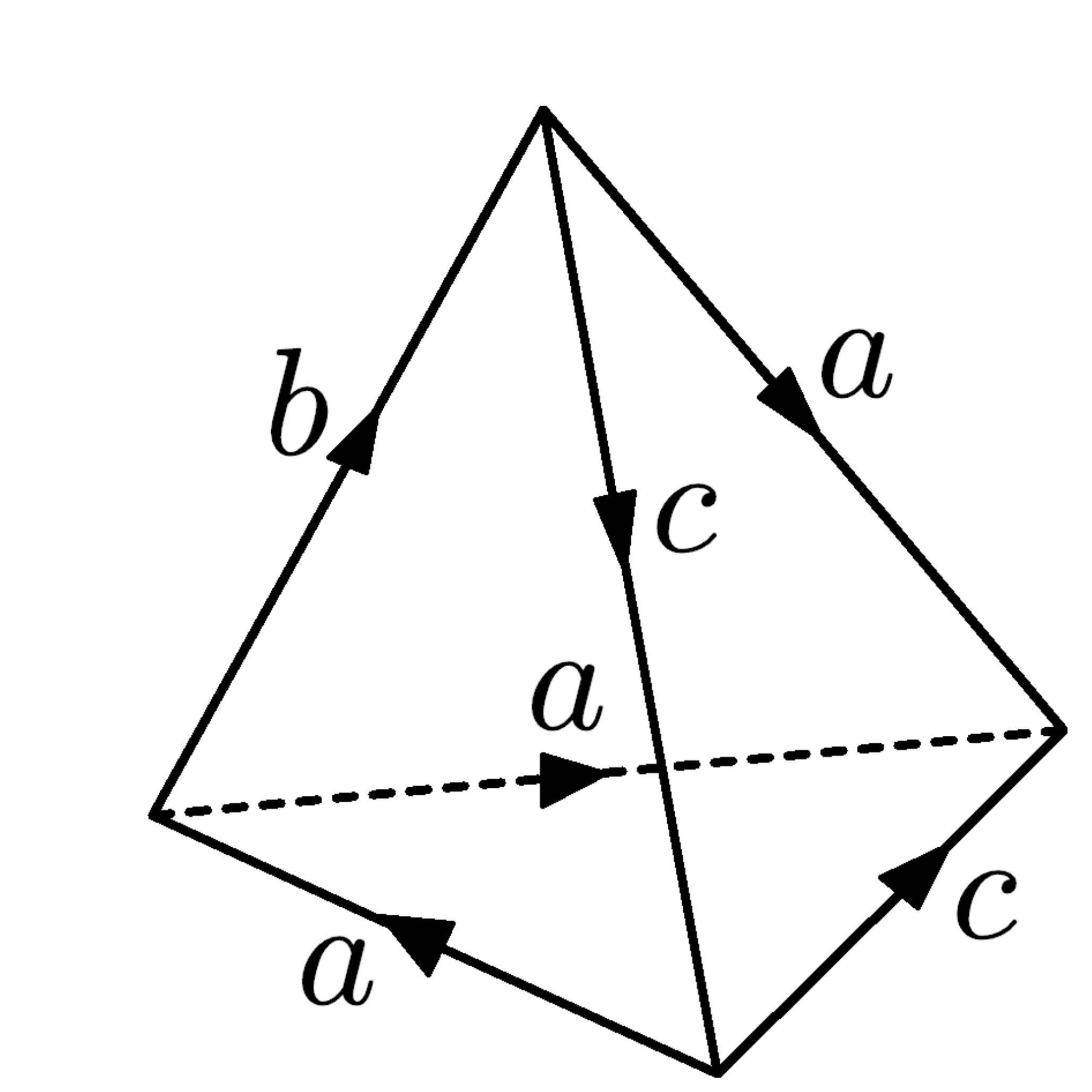} 
\caption{Minimal ideal triangulations of $m006$ and $m007$.}
\end{figure}

\newpage

(2) $m006$ and $m007$

According to Regina \cite{Burton} and SnapPy \cite{Culler}, $m006$ and $m007$ are cusped orientable 3-manifolds 
with the minimal ideal triangulations shown in Figure 12.
Their hyperbolic volumes, Turaev-Viro invariants and homology groups are as follows:
$${\rm Vol}(m006) = {\rm Vol}(m007) \approx 2.56897,$$
$$
TV(m006) = \sum w_a w_b w_c \begin{vmatrix} a & b & c \\ a & b & a \end{vmatrix} \begin{vmatrix} a & b & c \\ a & c & a \end{vmatrix}
\begin{vmatrix} a & b & c \\ a & c & a \end{vmatrix} = TV(m007),
$$
$$H_1(m006 ;\mathbb{Z}) = \mathbb{Z} \oplus \mathbb{Z}_5,\quad H_1(m007 ; \mathbb{Z}) = \mathbb{Z} \oplus \mathbb{Z}_3.$$










$$Z(m006) = \sum_{a \in G, a^5=1} \alpha(a,a,a)^3 \alpha(a,a^2,a) \alpha(a^3,a^3,a^3).$$









$$Z(m007) = \sum_{a \in G, a^3=1} \alpha(a,a,a) \alpha(a^{-1},a^{-1},a^{-1}).$$

If $G = \mathbb{Z}_5$ and $\alpha$ is a generator of $H^3(\mathbb{Z}_5, U(1)) \cong  \mathbb{Z}_5$,
$$Z(m006) = -\frac{\sqrt{5}}{2}+\frac{i}{4}(\sqrt{10+2\sqrt{5}}-\sqrt{10-2\sqrt{5}}),\quad Z(m007) = 1.$$ 
Hence the generalized DW invariants distinguish $m006$ and $m007$.

\begin{figure}
\centering
$m009$ \hspace{0.6cm} 
\includegraphics[width=2.8cm]{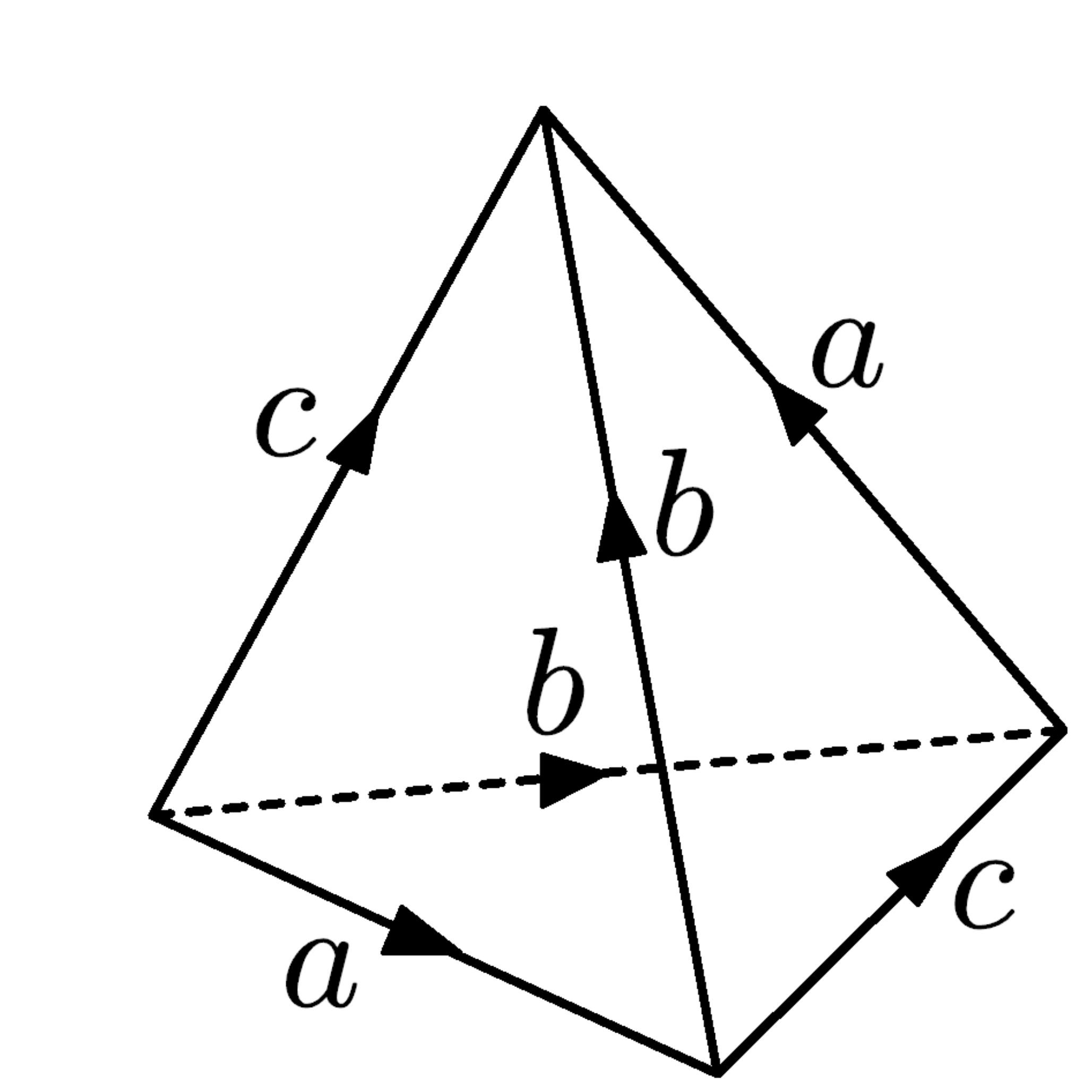}
\includegraphics[width=2.8cm]{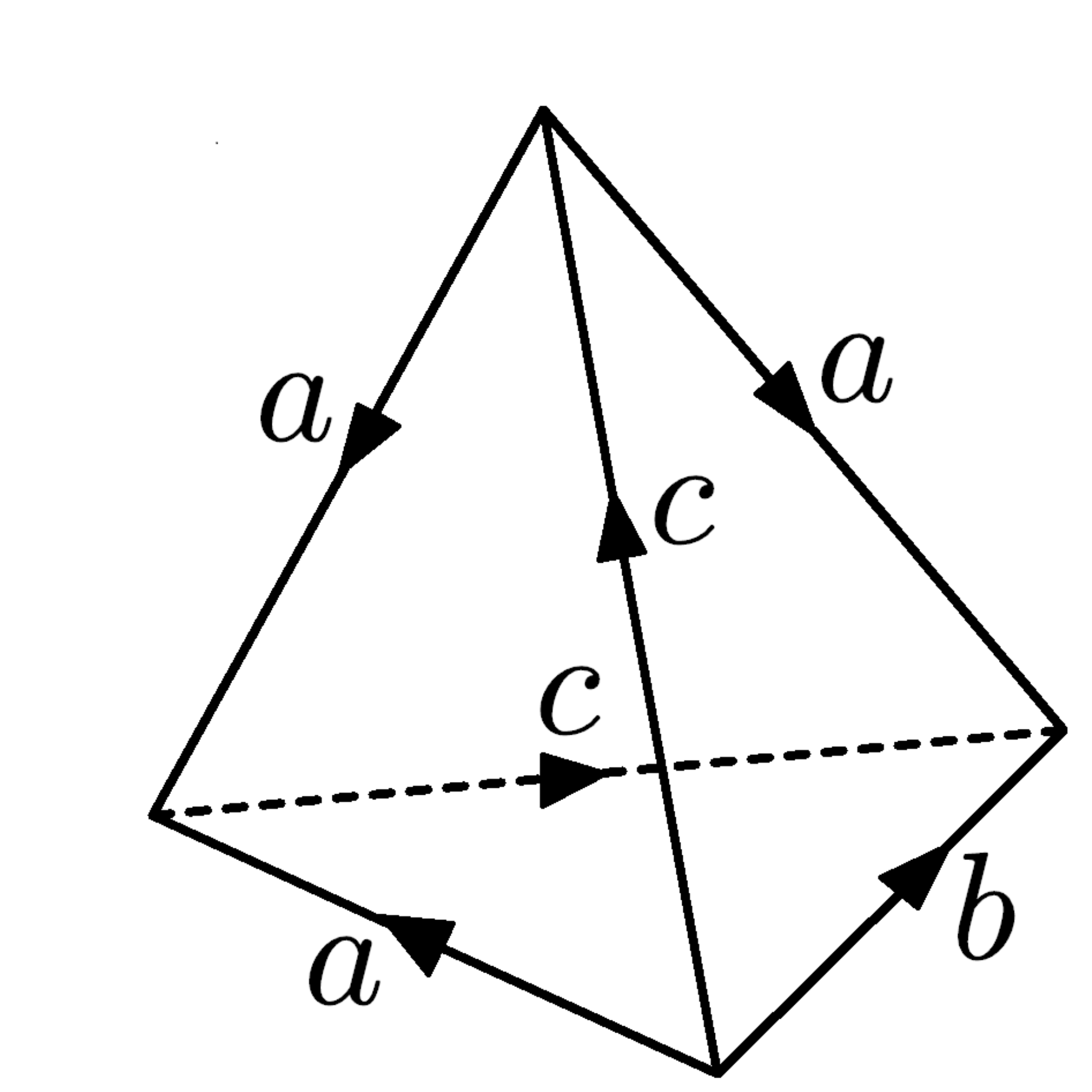} 
\includegraphics[width=2.8cm]{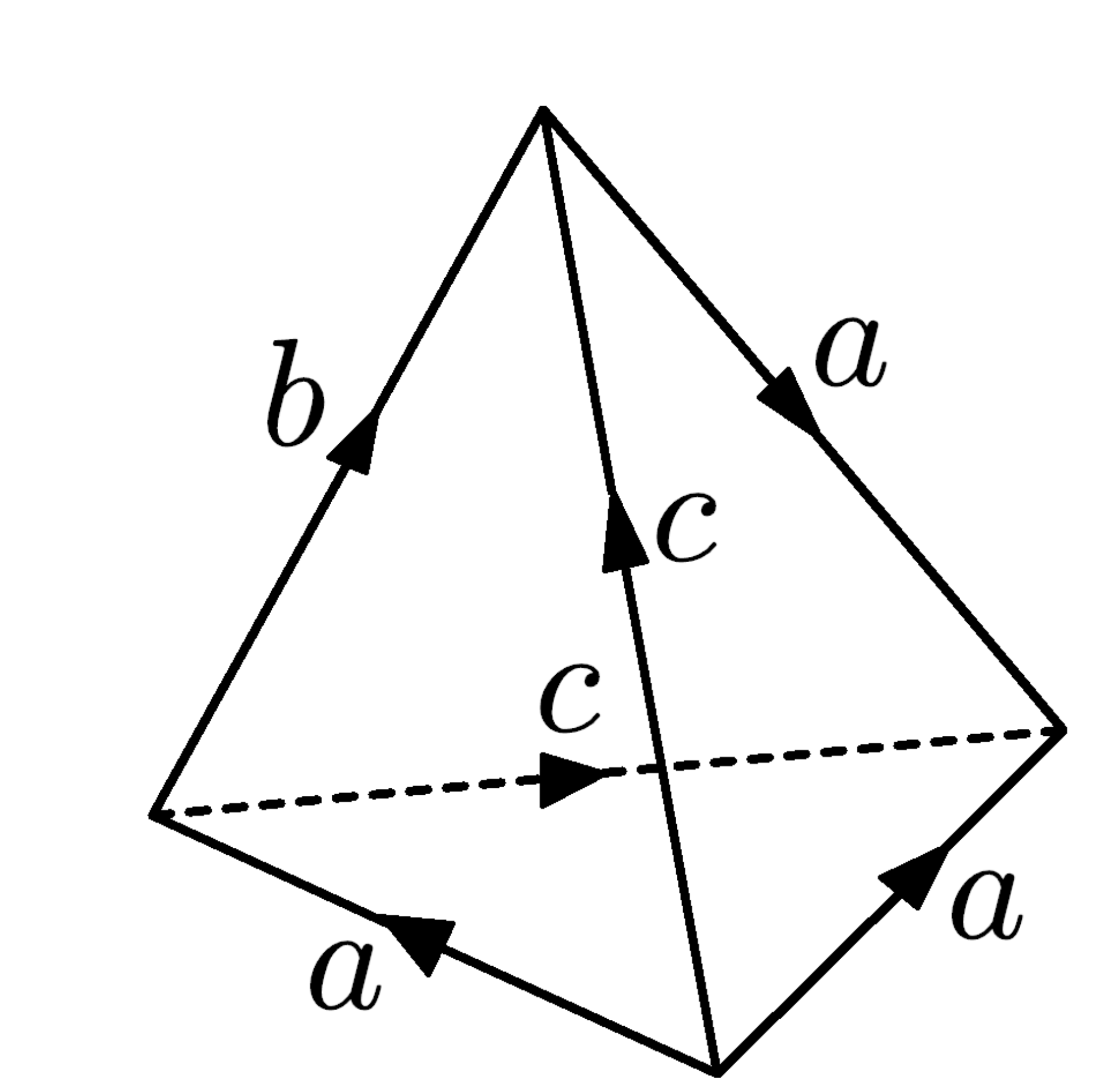}

\vspace{0.5cm}

$m010$ \hspace{0.6cm}
\includegraphics[width=2.8cm]{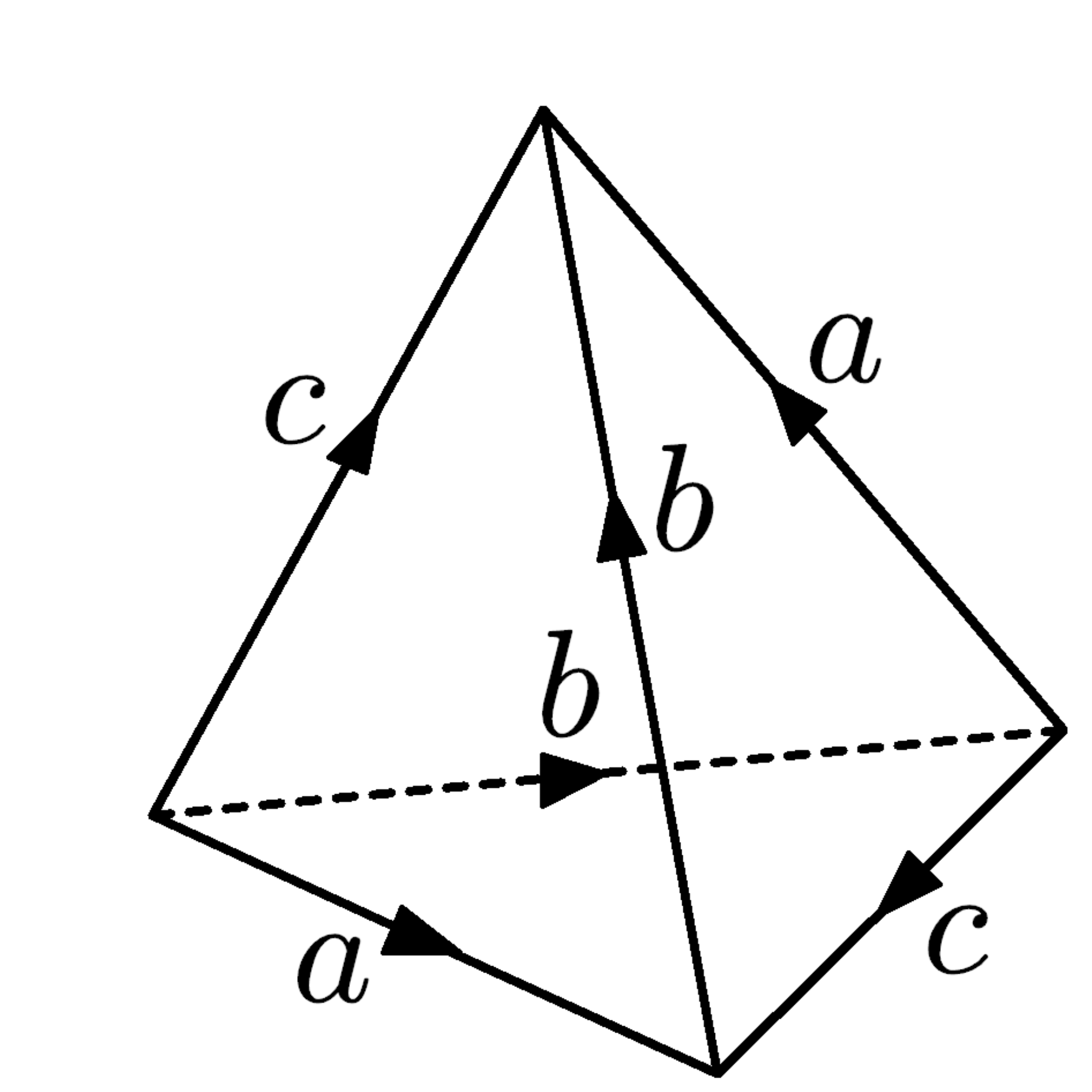} 
\includegraphics[width=2.8cm]{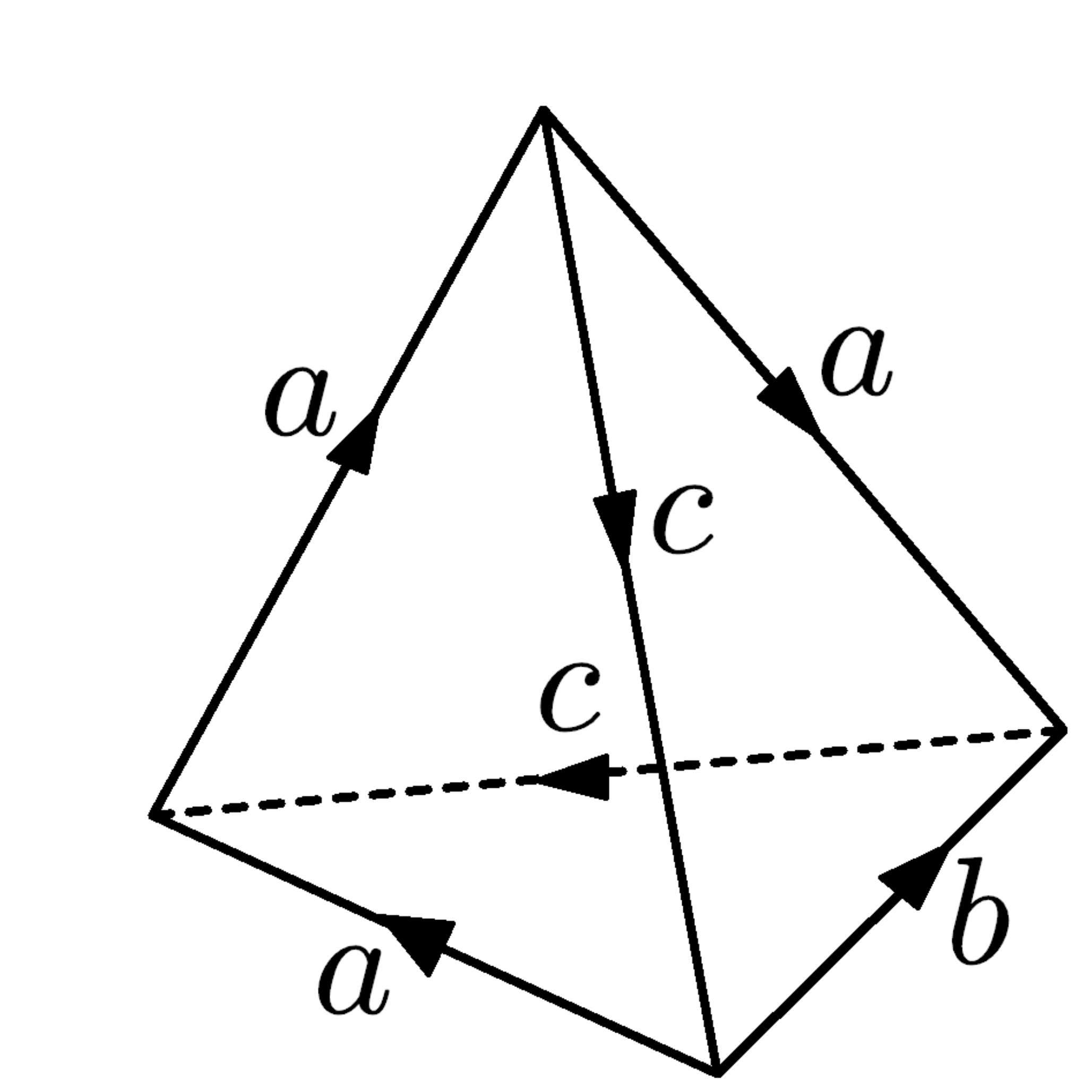}
\includegraphics[width=2.8cm]{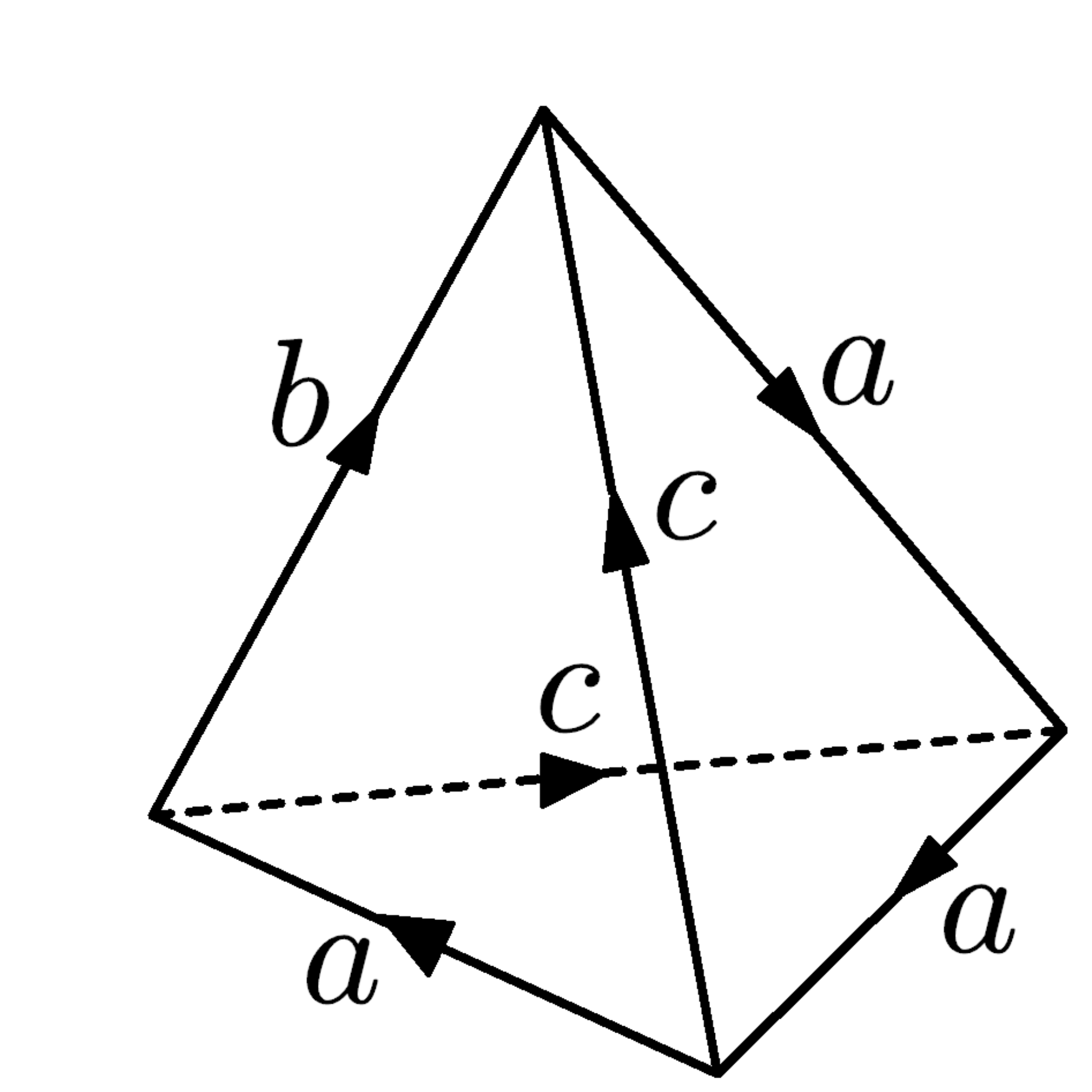} 
\caption{Minimal ideal triangulations of $m009$ and $m010$.}
\end{figure}

(3) $m009$ and $m010$

According to Regina \cite{Burton} and SnapPy \cite{Culler}, $m009$ and $m010$ are cusped orientable 3-manifolds 
with the minimal ideal triangulations shown in Figure 13.
Their hyperbolic volumes, Turaev-Viro invariants and homology groups are as follows:
$${\rm Vol}(m009) = {\rm Vol}(m010) \approx 2.66674,$$
$$
TV(m009) = \sum w_a w_b w_c \begin{vmatrix} a & b & c \\ a & b & c \end{vmatrix} \begin{vmatrix} a & b & c \\ a & a & c \end{vmatrix}
\begin{vmatrix} a & b & c \\ a & a & c \end{vmatrix} = TV(m010),
$$
$$H_1(m009 ;\mathbb{Z}) = \mathbb{Z} \oplus \mathbb{Z}_2,\quad H_1(m010 ; \mathbb{Z}) = \mathbb{Z} \oplus \mathbb{Z}_6.$$



$$Z(m009) = \sum_{a \in G, a^2=1} \alpha(a,a,a).$$

$$Z(m010) = \sum_{b,c \in G, b^3=1,c^2=1,bc=cb} \alpha(b,b,b)^{-1} \alpha(b,b^{-1},b) \alpha(b,c,c) \alpha(cb,c,c)^{-1}.$$

If $G = \mathbb{Z}_3$ and $\alpha$ is a generator of $H^3(\mathbb{Z}_3, U(1)) \cong  \mathbb{Z}_3$,

$$Z(m009) = 1, \quad Z(m010) = -\sqrt{3}i.$$ 

Hence the generalized DW invariants distinguish $m009$ and $m010$.

In fact the previous three pairs of cusped hyperbolic 3-manifolds with the same hyperbolic volumes 
and the same Turaev-Viro invariants are distinguished by their homology groups.
The following pair of cusped hyperbolic 3-manifolds with the same hyperbolic volumes and the same homology groups have distinct generalized DW invariants.

\begin{figure}
\centering
$s778$ \hspace{0.6cm} 
\includegraphics[width=2.6cm]{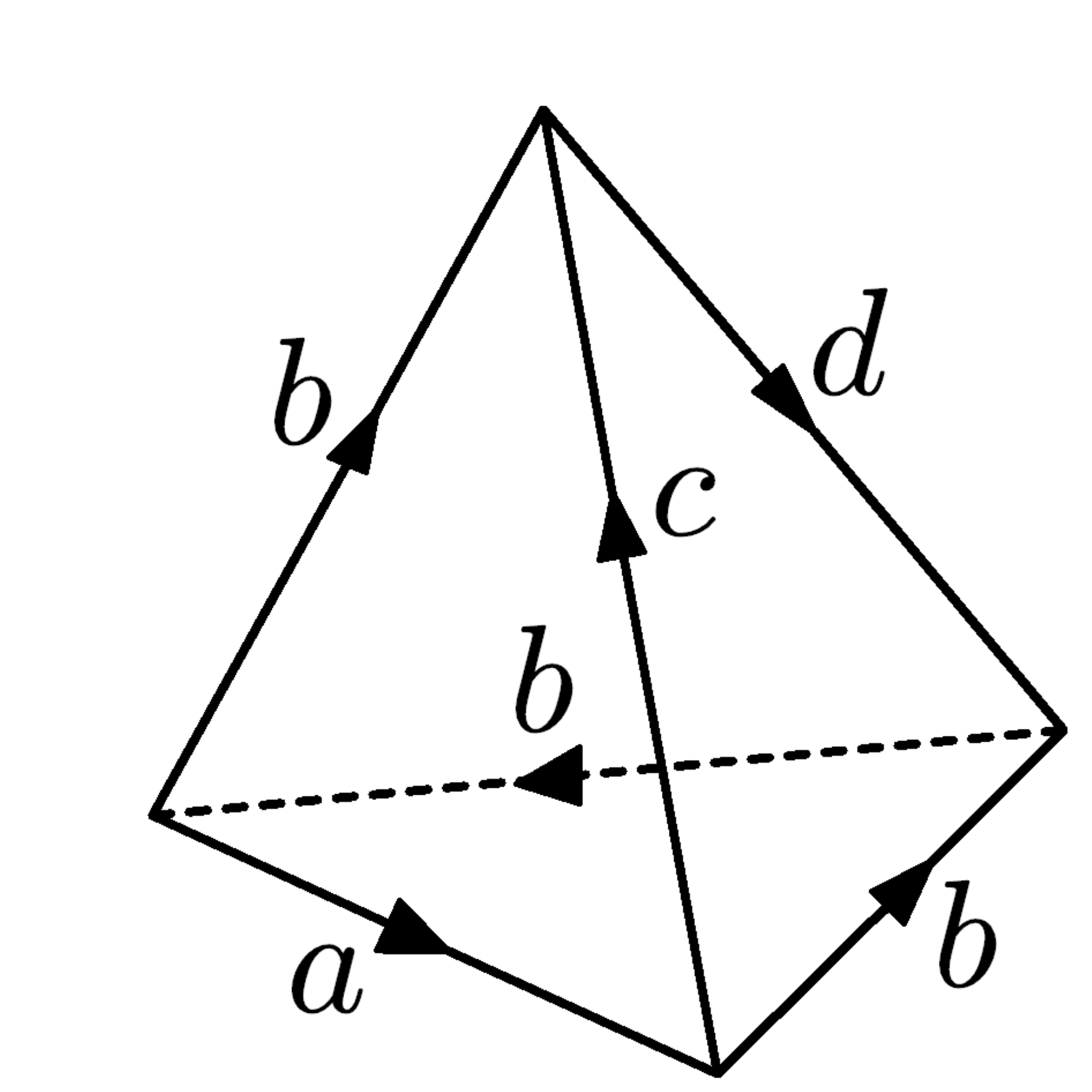}
\includegraphics[width=2.6cm]{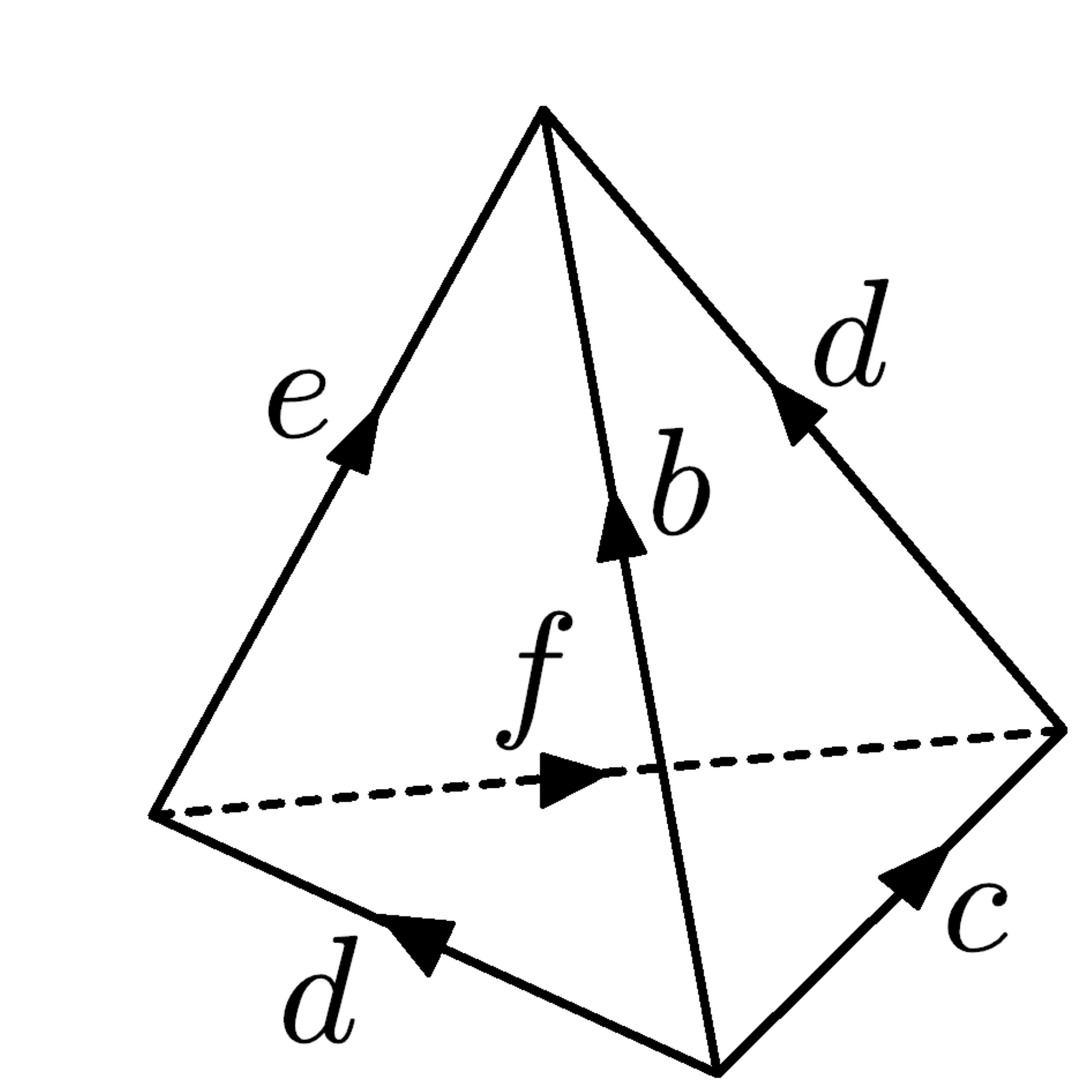} 
\includegraphics[width=2.6cm]{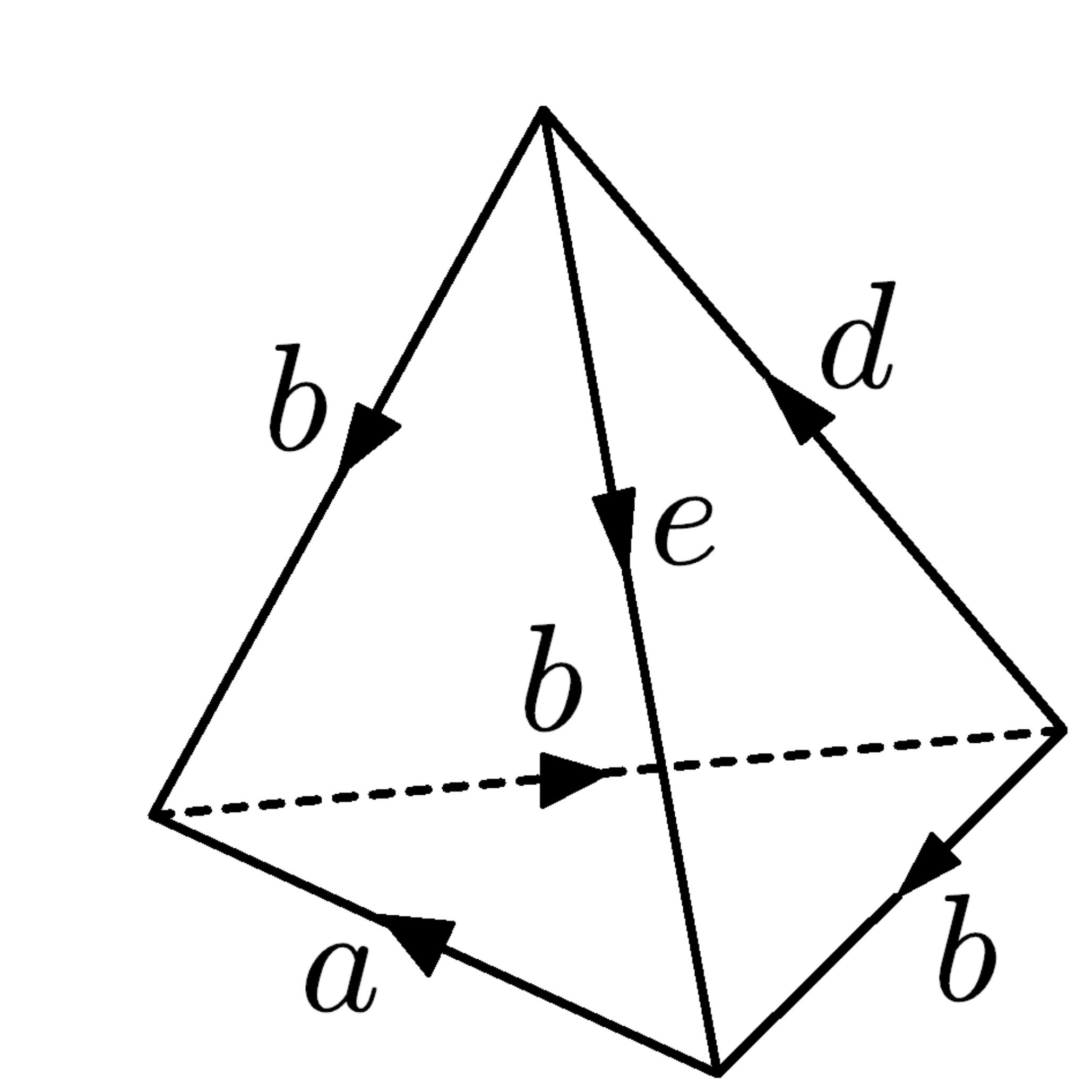}

\vspace{0.3cm}

\hspace{1cm}
\includegraphics[width=2.6cm]{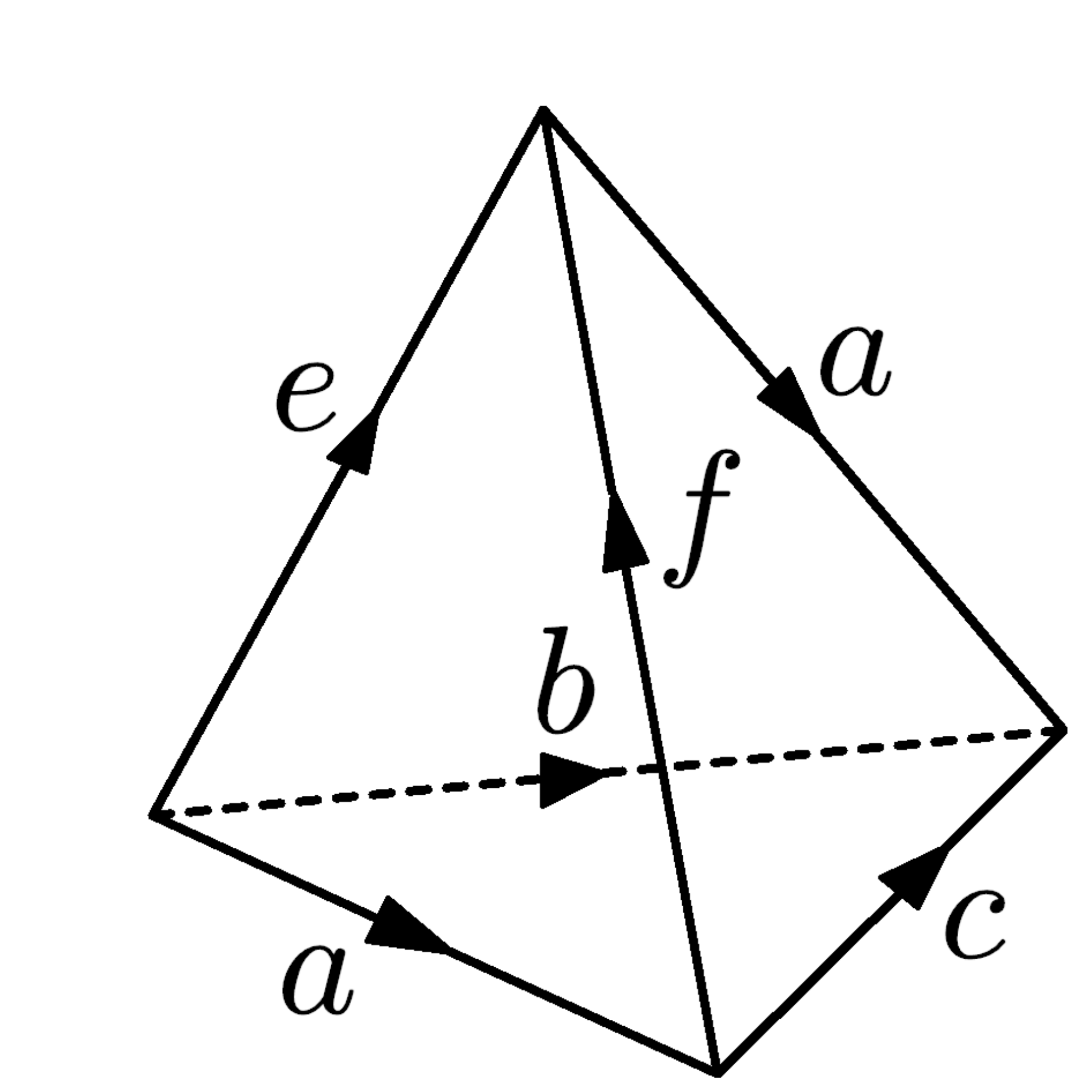} 
\includegraphics[width=2.6cm]{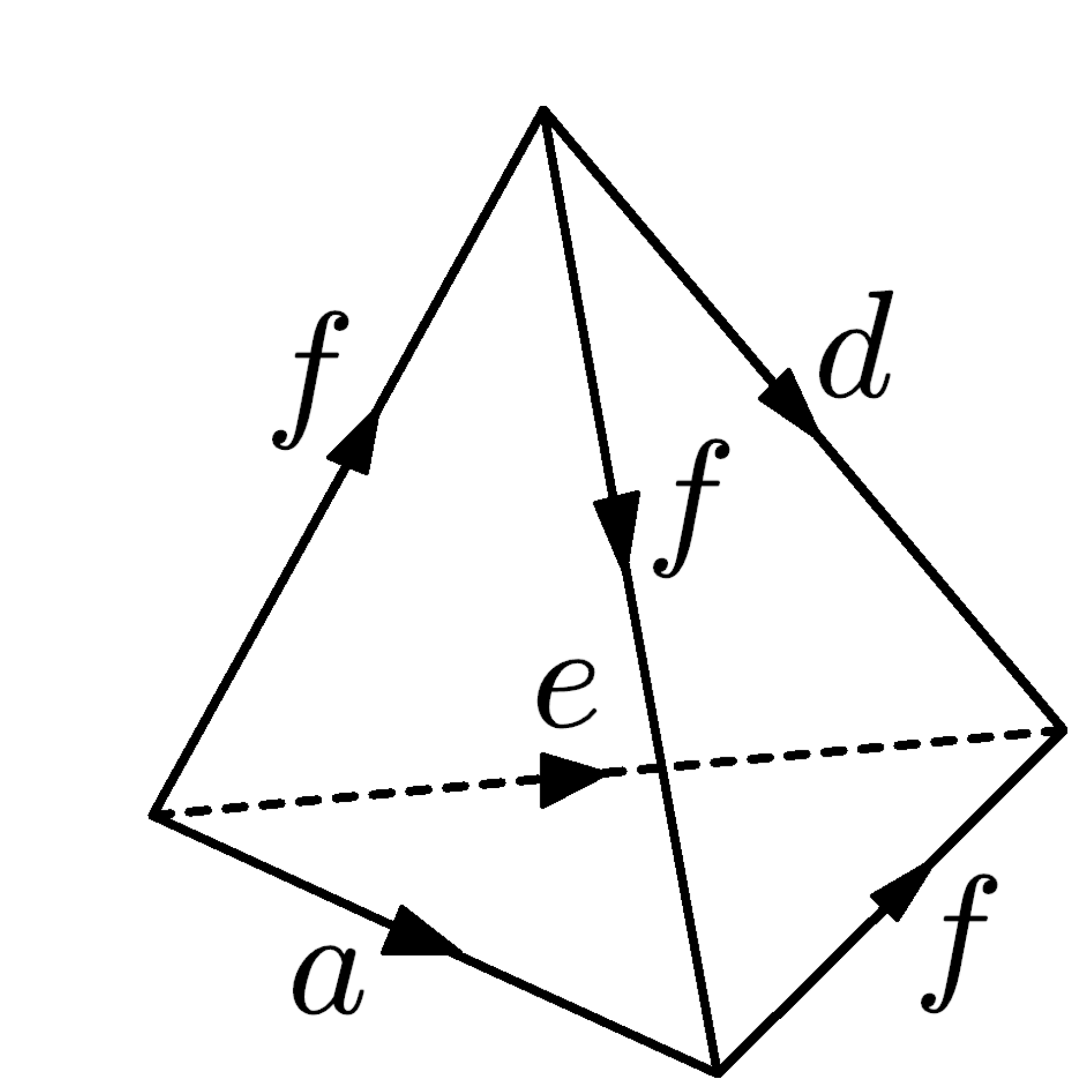}
\includegraphics[width=2.6cm]{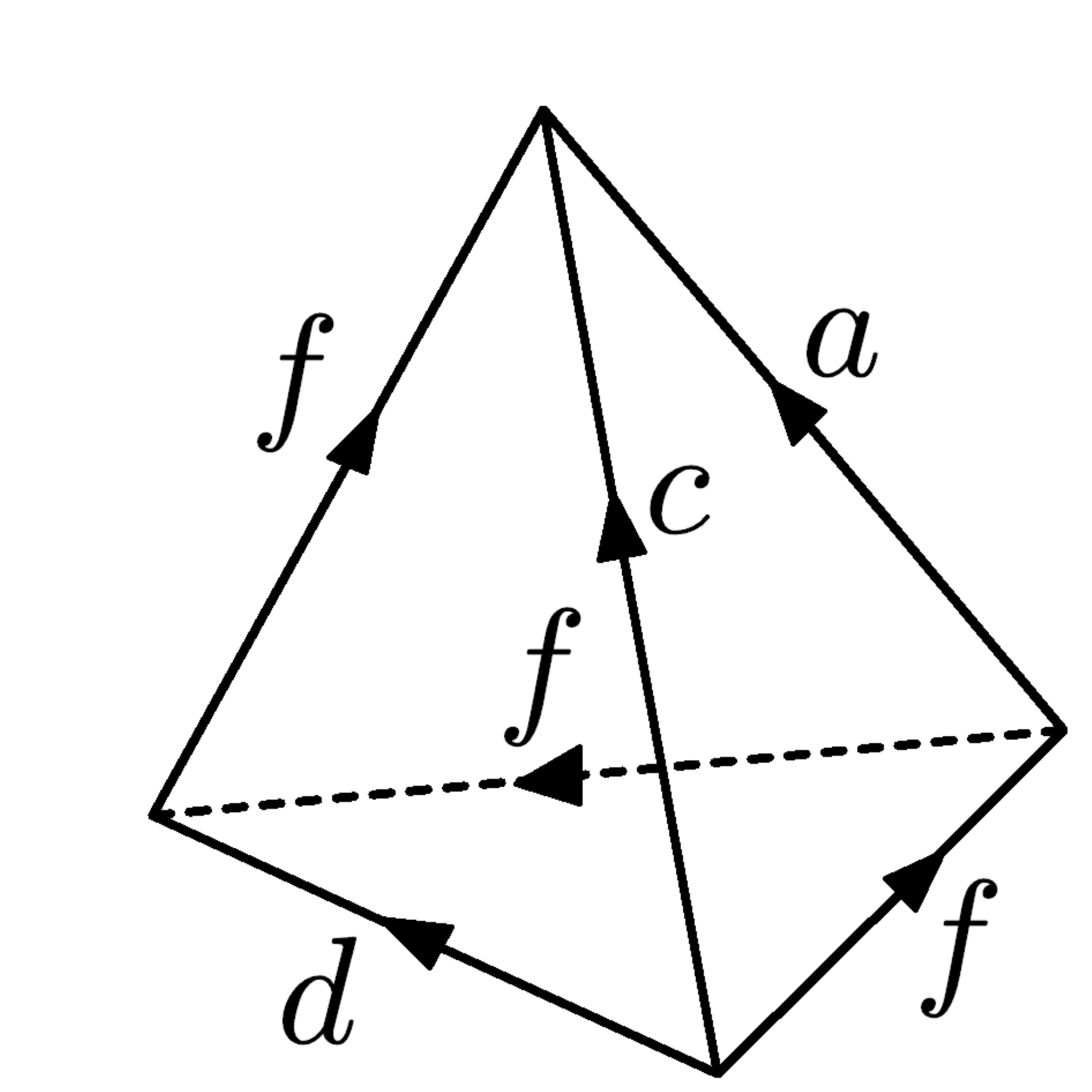} 
\caption{A minimal ideal triangulation of $s778$.}
\end{figure}

\begin{figure}
\centering
$s788$ \hspace{0.6cm} 
\includegraphics[width=2.6cm]{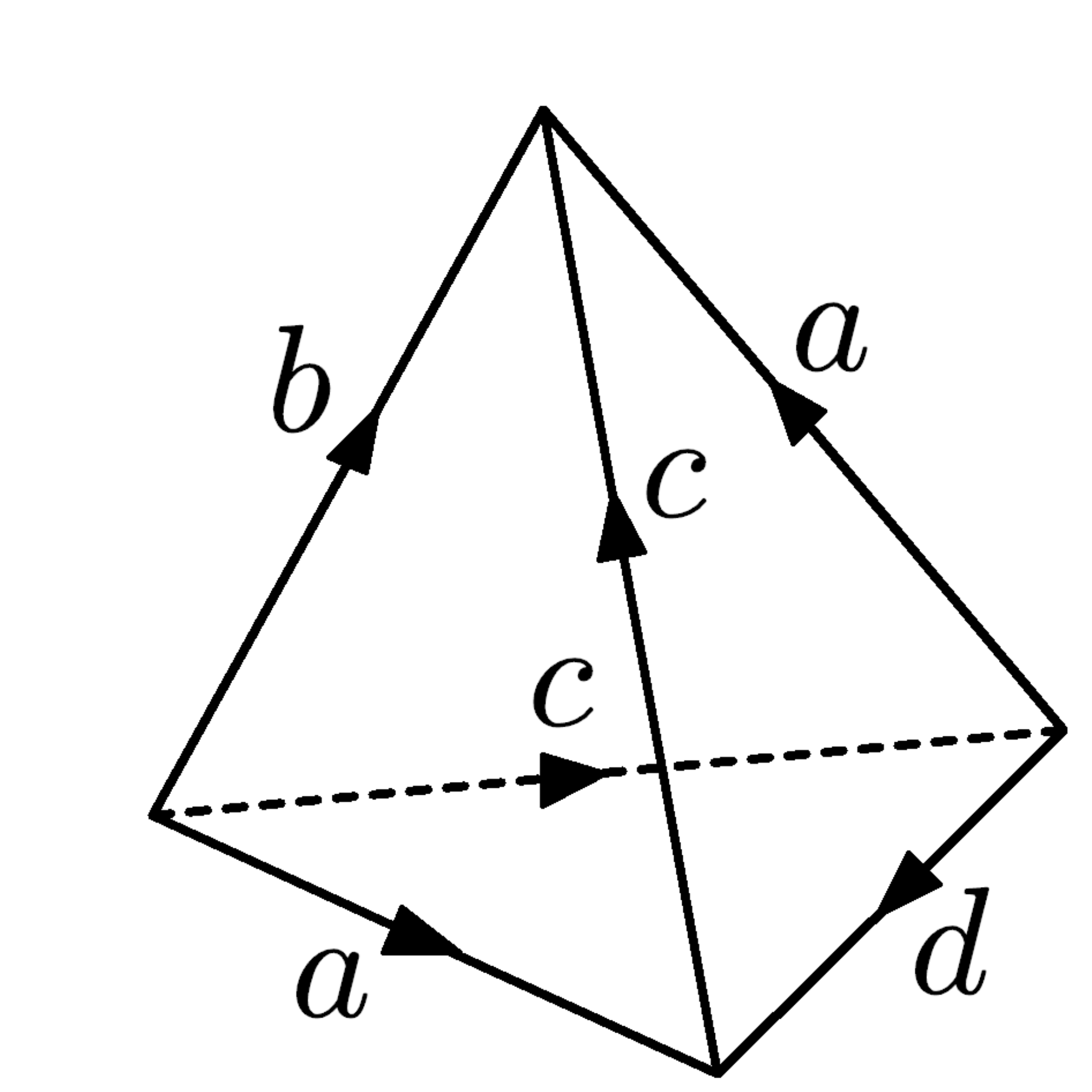}
\includegraphics[width=2.6cm]{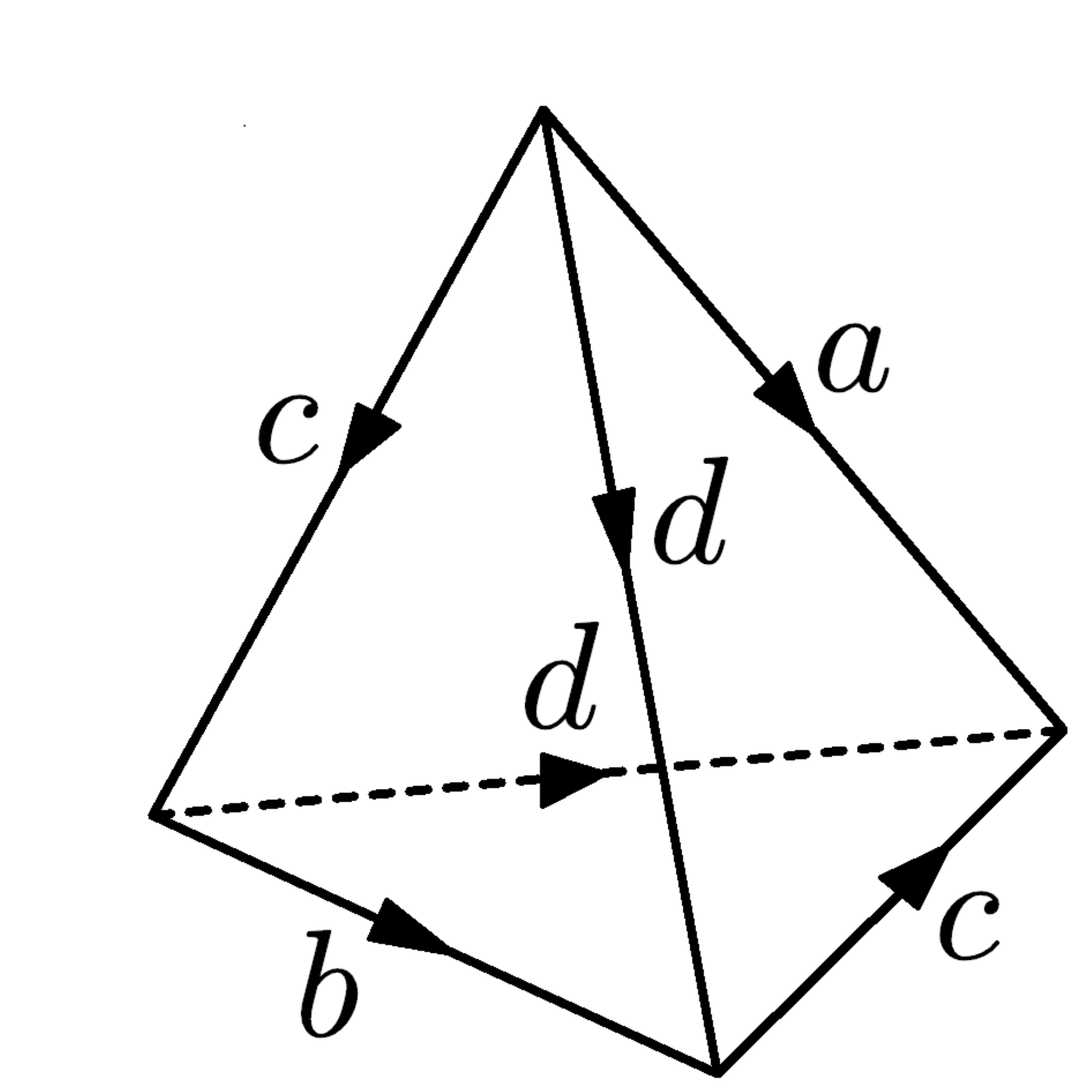} 
\includegraphics[width=2.6cm]{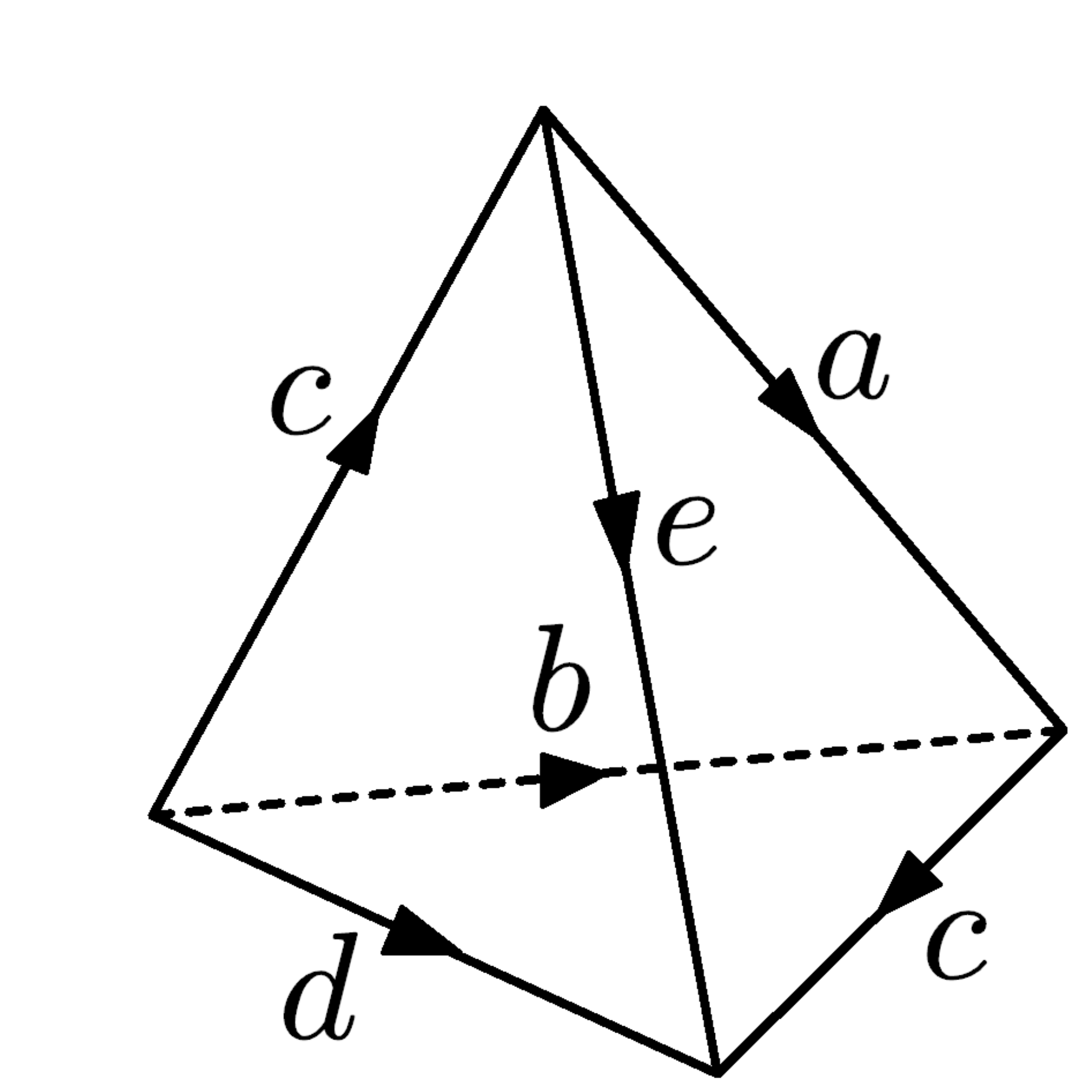}

\vspace{0.3cm}

\hspace{1cm}
\includegraphics[width=2.6cm]{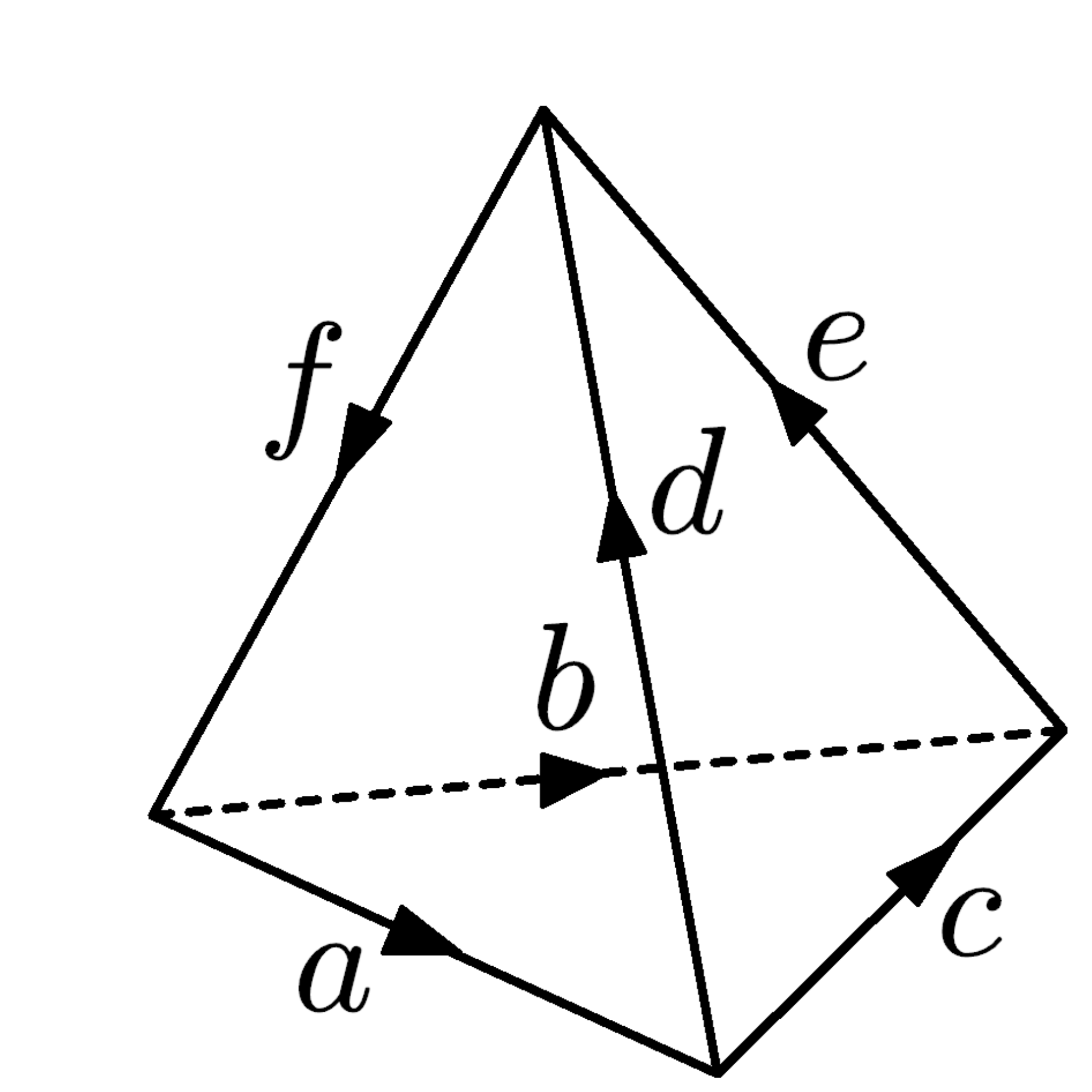} 
\includegraphics[width=2.6cm]{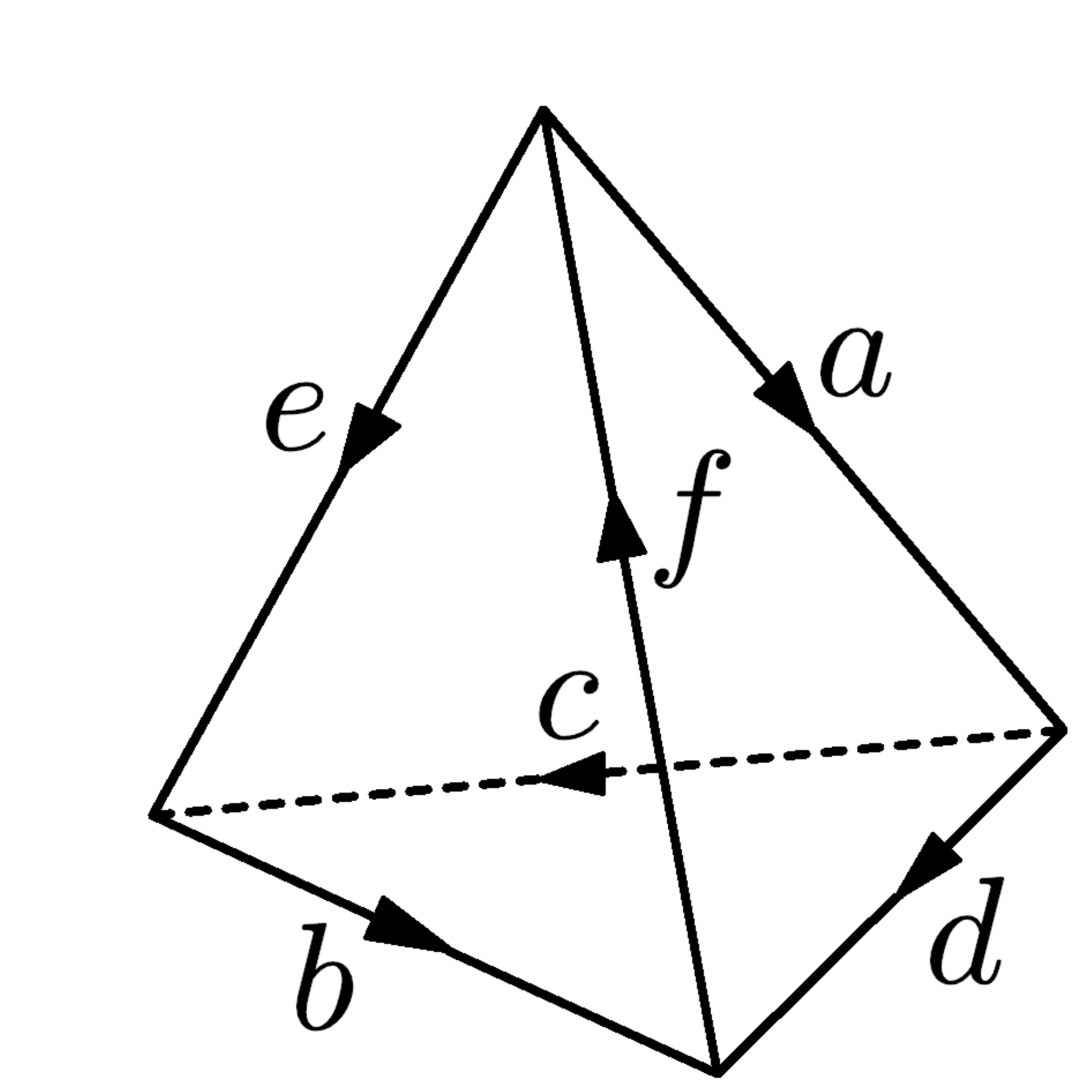}
\includegraphics[width=2.6cm]{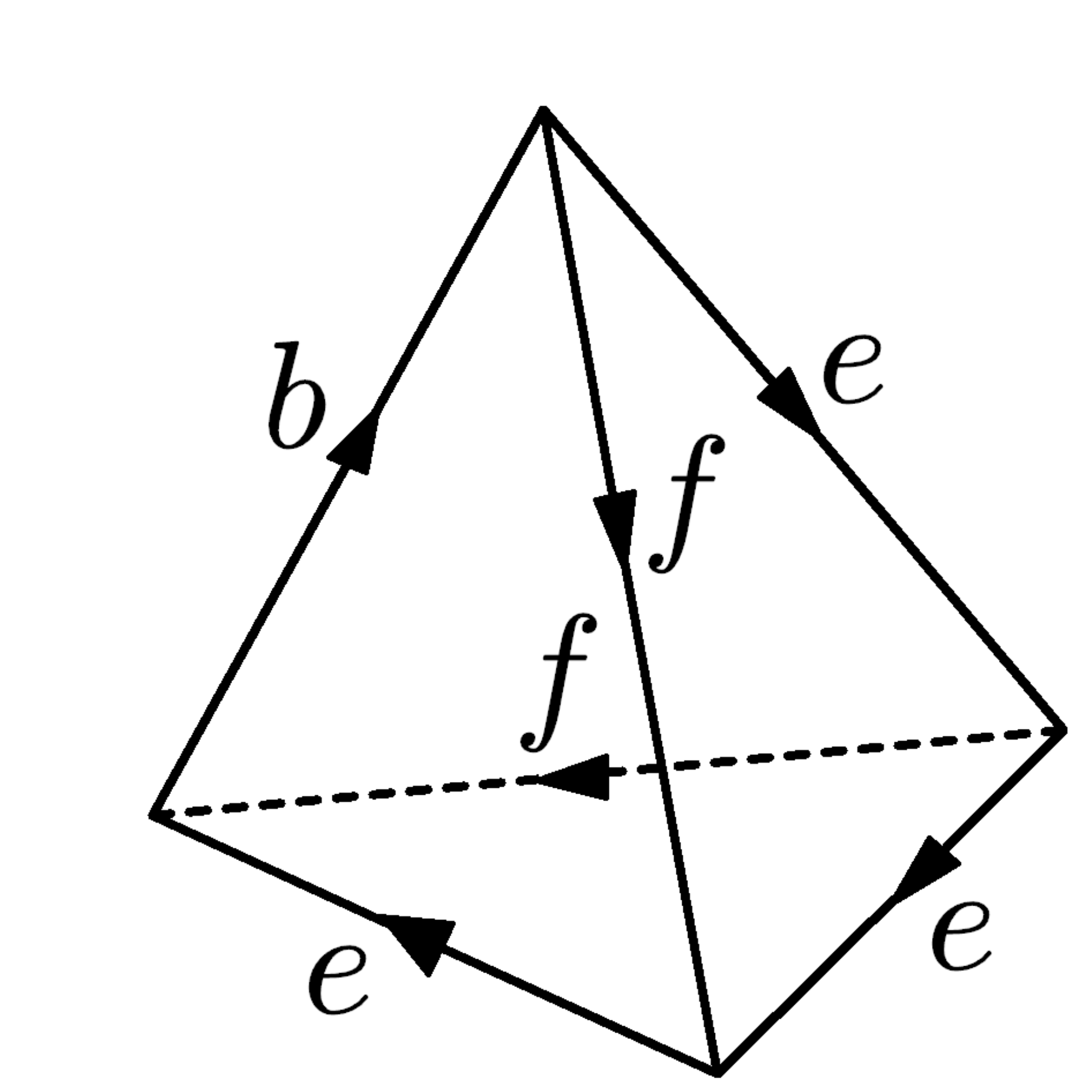} 
\caption{A minimal ideal triangulation of $s788$.}
\end{figure}

(4) $s778$ and $s788$

According to Regina \cite{Burton} and SnapPy \cite{Culler}, $s778$ and $s788$ are cusped orientable 3-manifolds with the minimal ideal triangulations 
shown in Figure 14 and 15 respectively.  
Their hyperbolic volumes, homology groups and $SO(3)$ Turaev-Viro invariants \cite{Kirby} at $r = 5$ 
are as follows:
$${\rm Vol}(s778) = {\rm Vol}(s788) \approx 5.33349,$$
$$H_1(s778 ;\mathbb{Z}) = H_1(s788 ; \mathbb{Z}) = \mathbb{Z} \oplus \mathbb{Z}_{12},$$
$$TV(s778) = 6-2\sqrt{5},\quad TV(s788) = \frac{5-\sqrt{5}}{2}.$$

The minimal ideal triangulations of $s778$ and $s788$ shown in Figure 14 and 15 do not admit a local order.
In order to assign a local order, transform the ideal triangulations of $s778$ and $s788$ by positive (2,3)-Pachner moves.









\begin{figure}
\centering
\includegraphics[width=2.4cm]{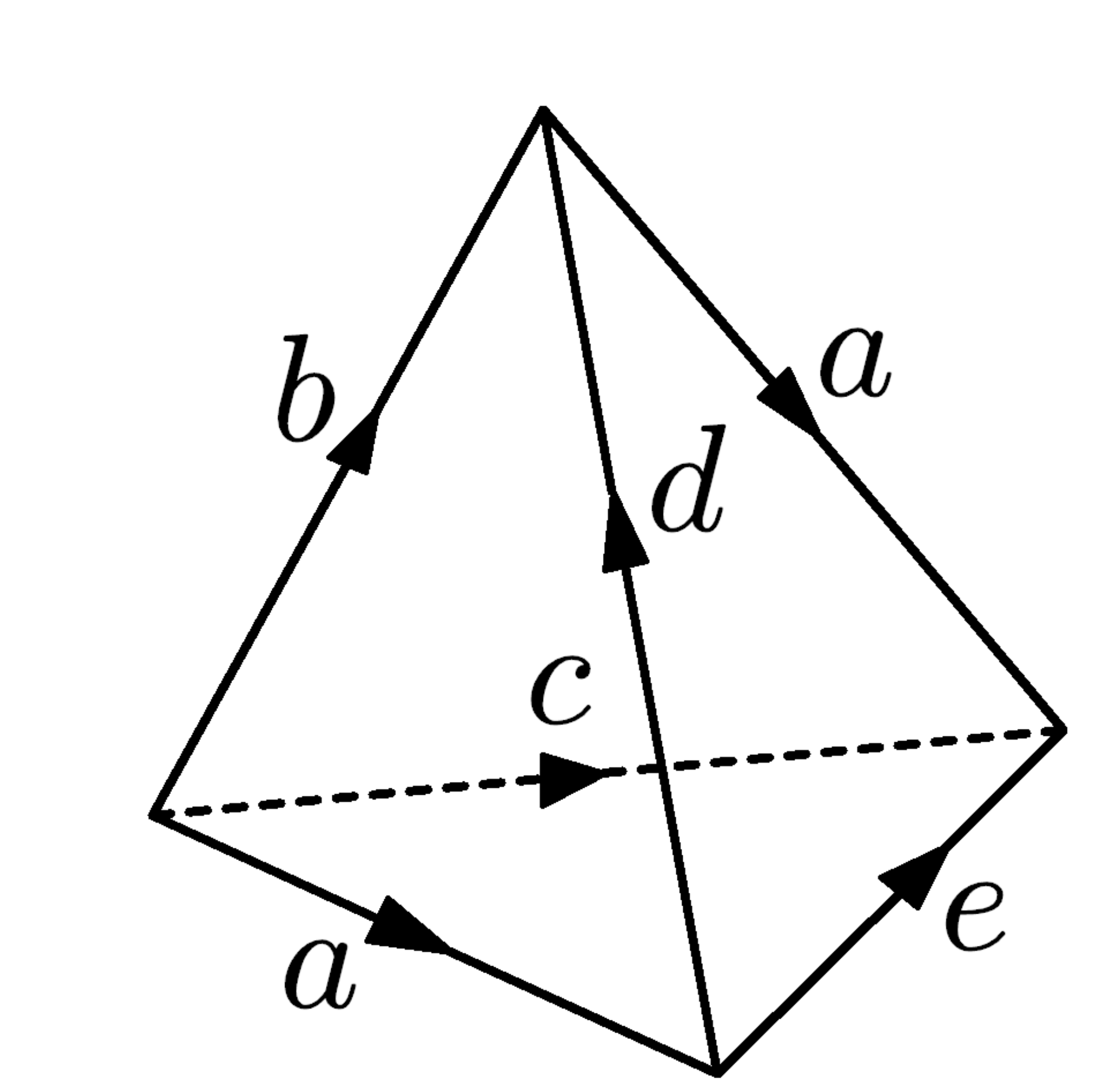}
\includegraphics[width=2.4cm]{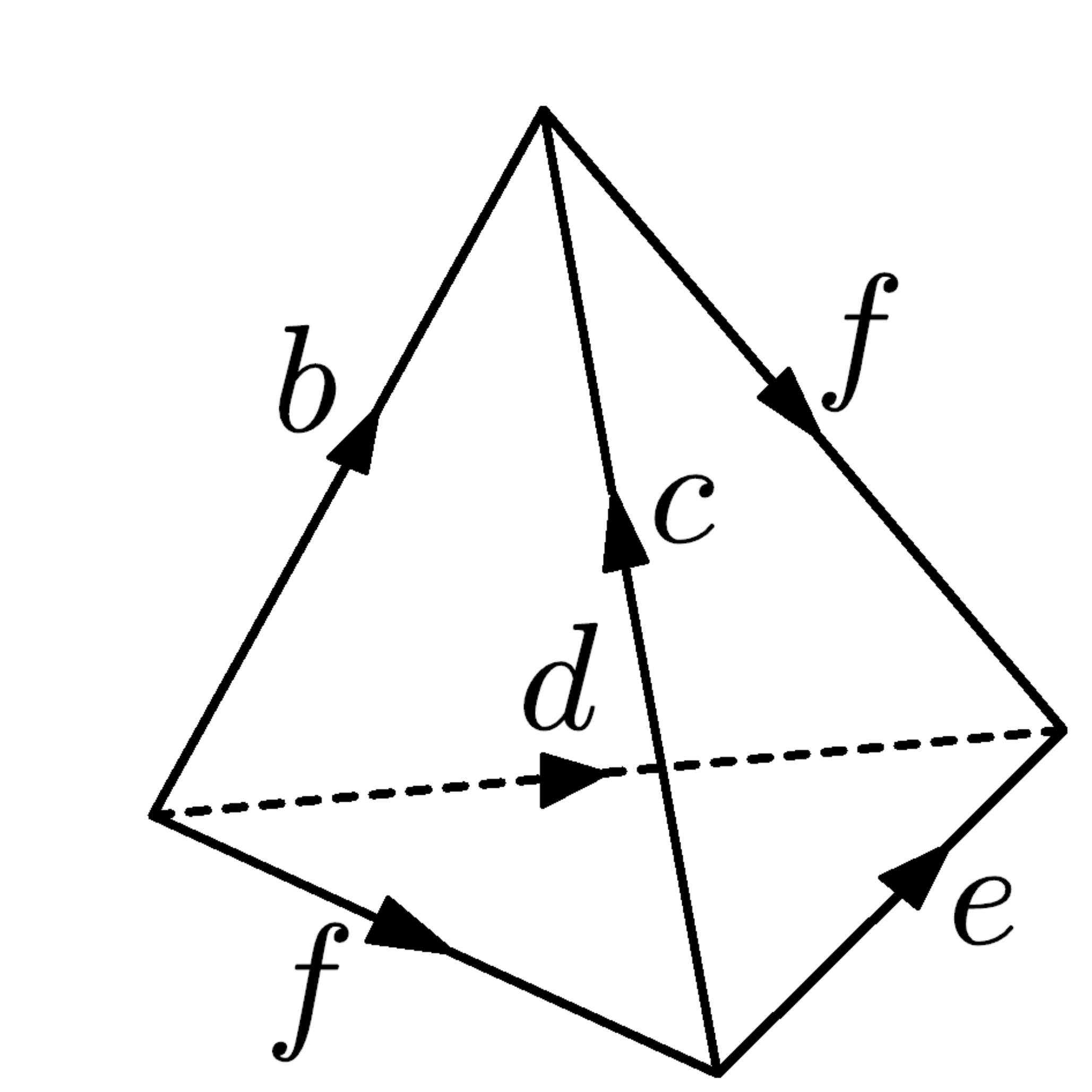} 
\includegraphics[width=2.4cm]{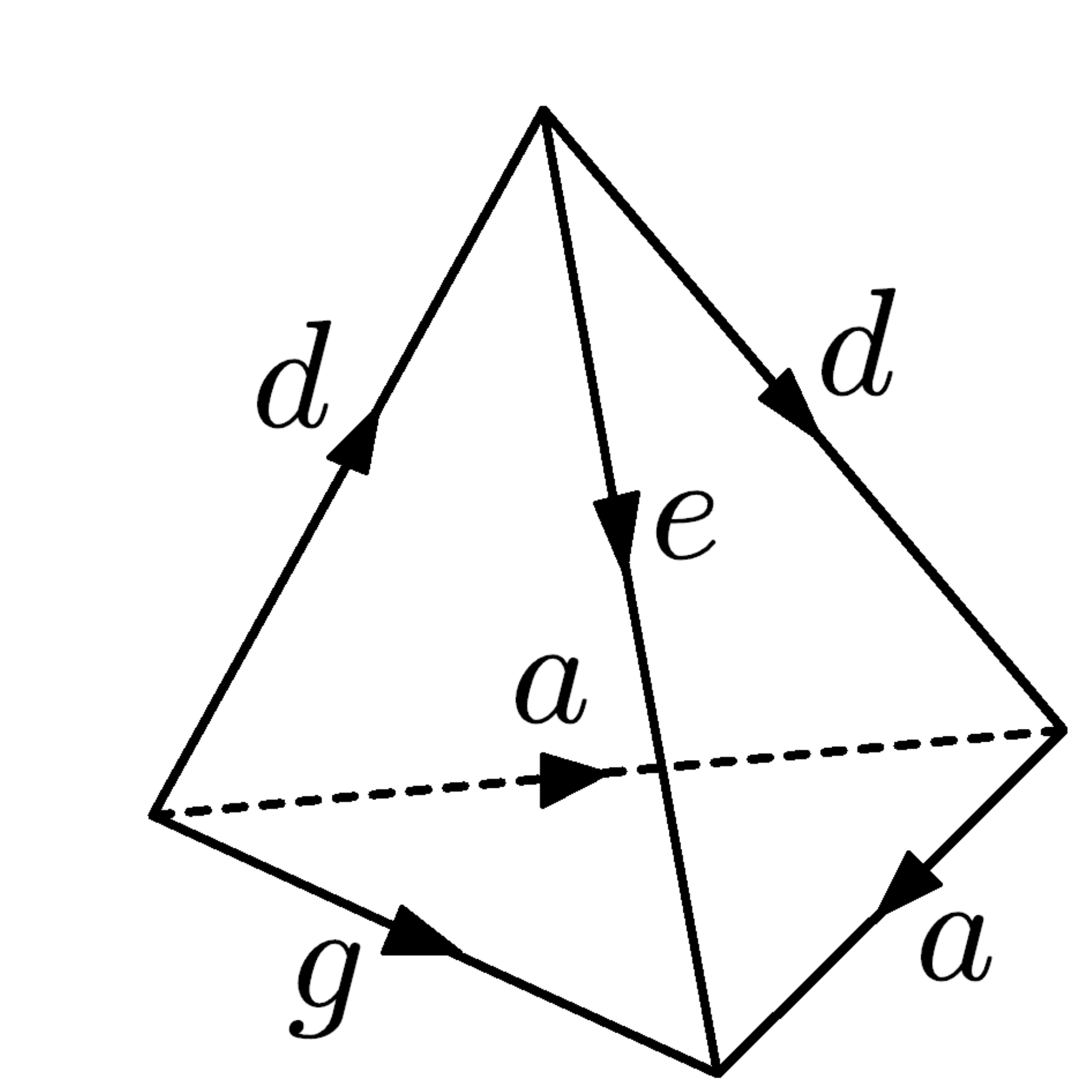}
\includegraphics[width=2.4cm]{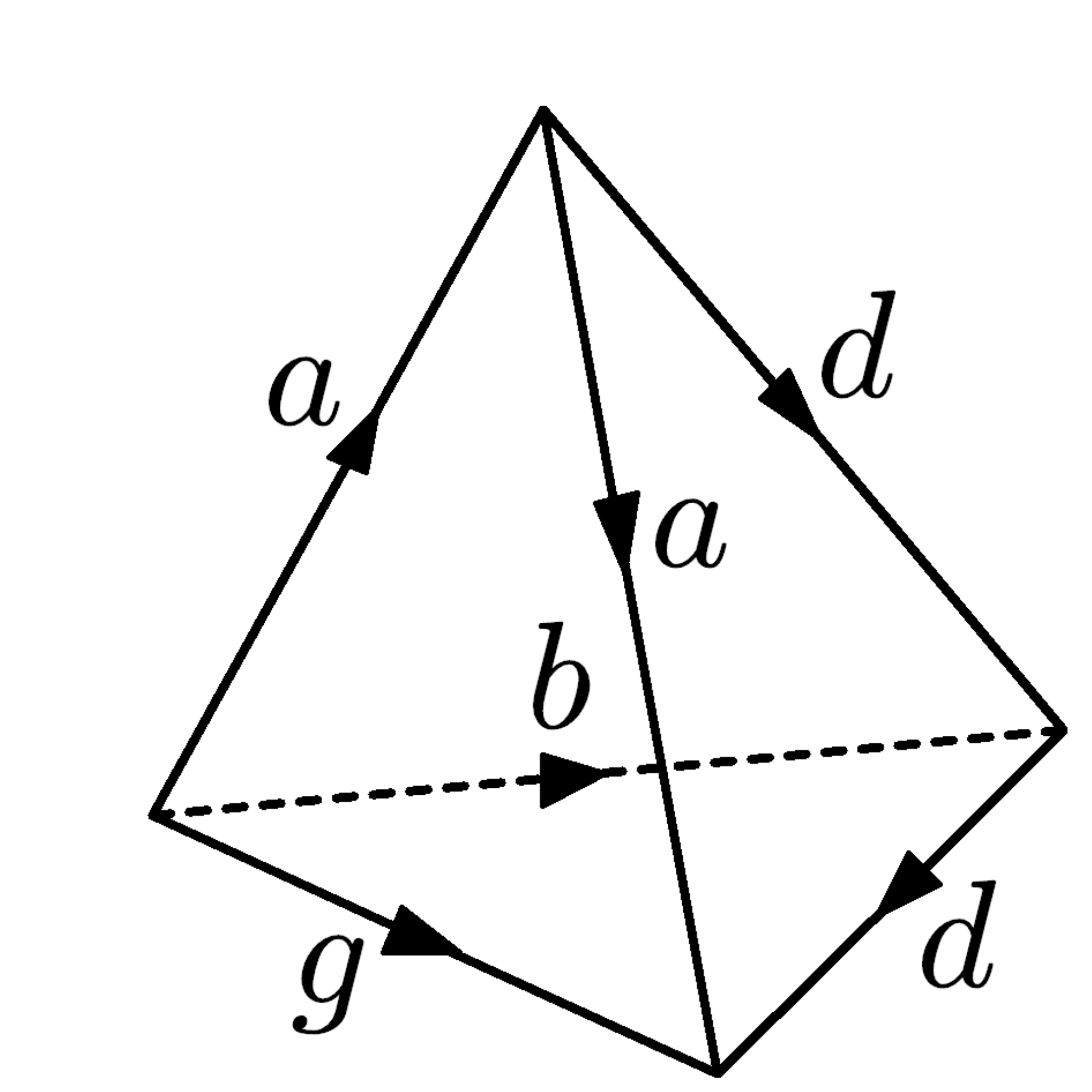}

\vspace{0.15cm}

\includegraphics[width=2.4cm]{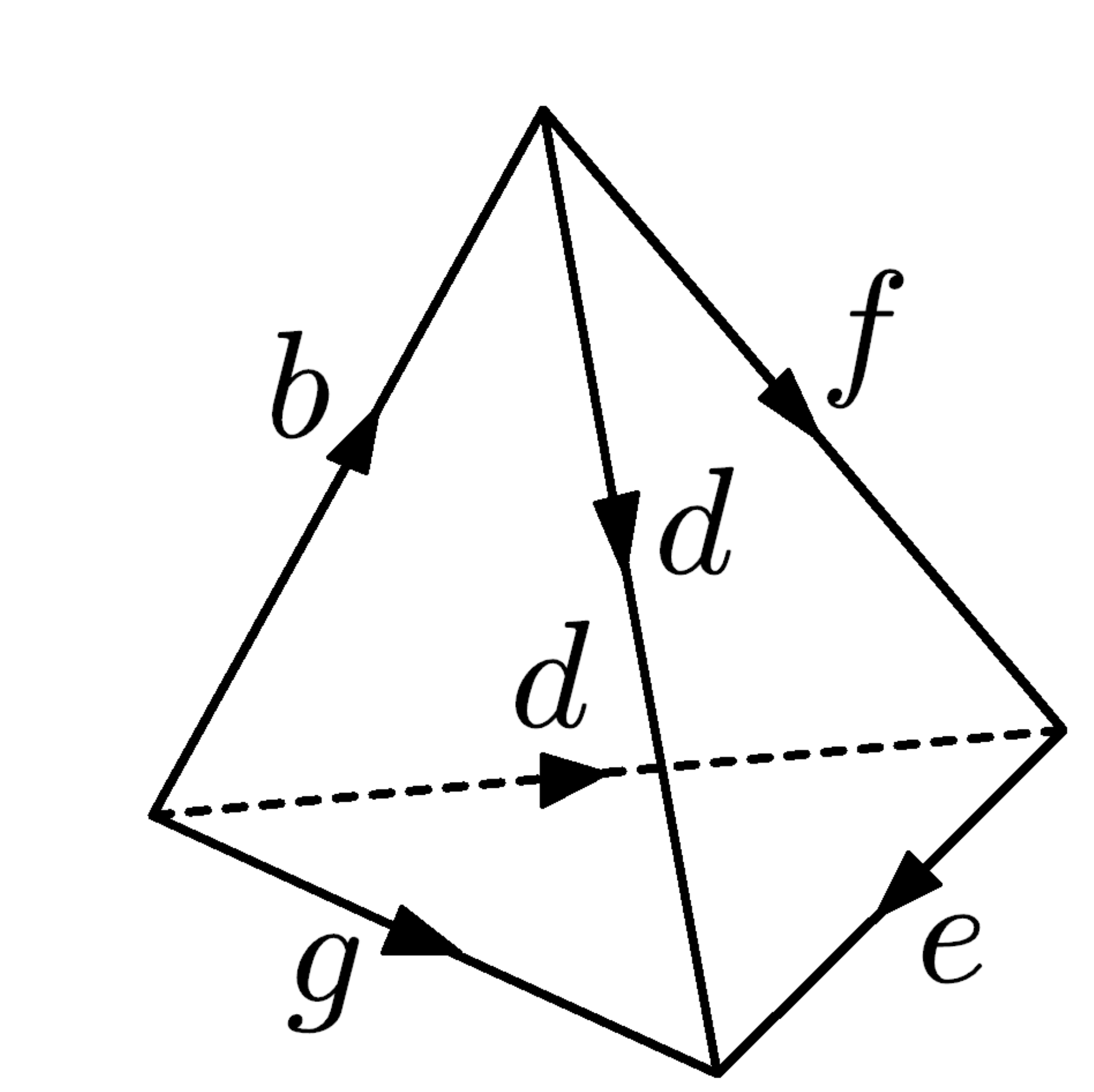}
\includegraphics[width=2.4cm]{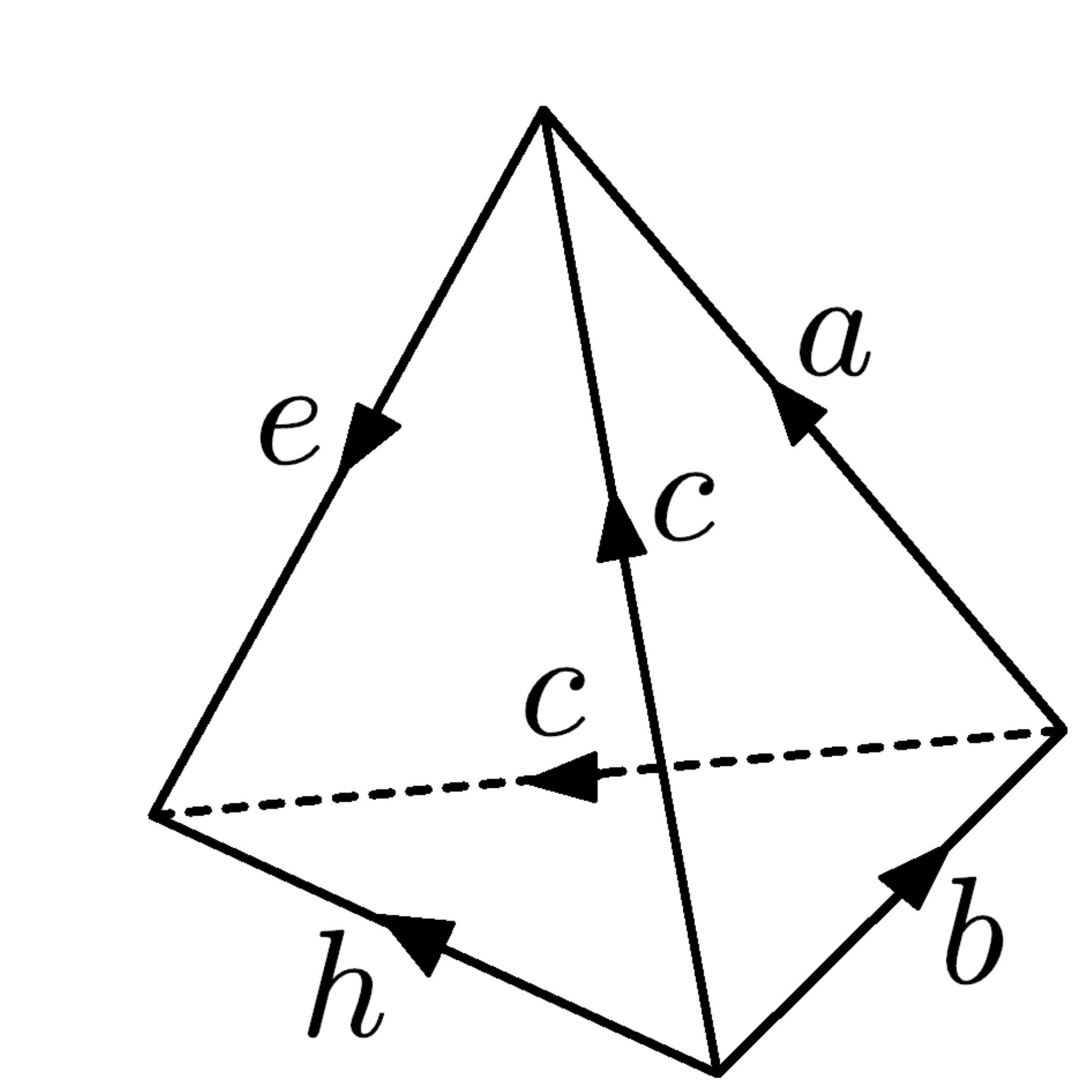}
\includegraphics[width=2.4cm]{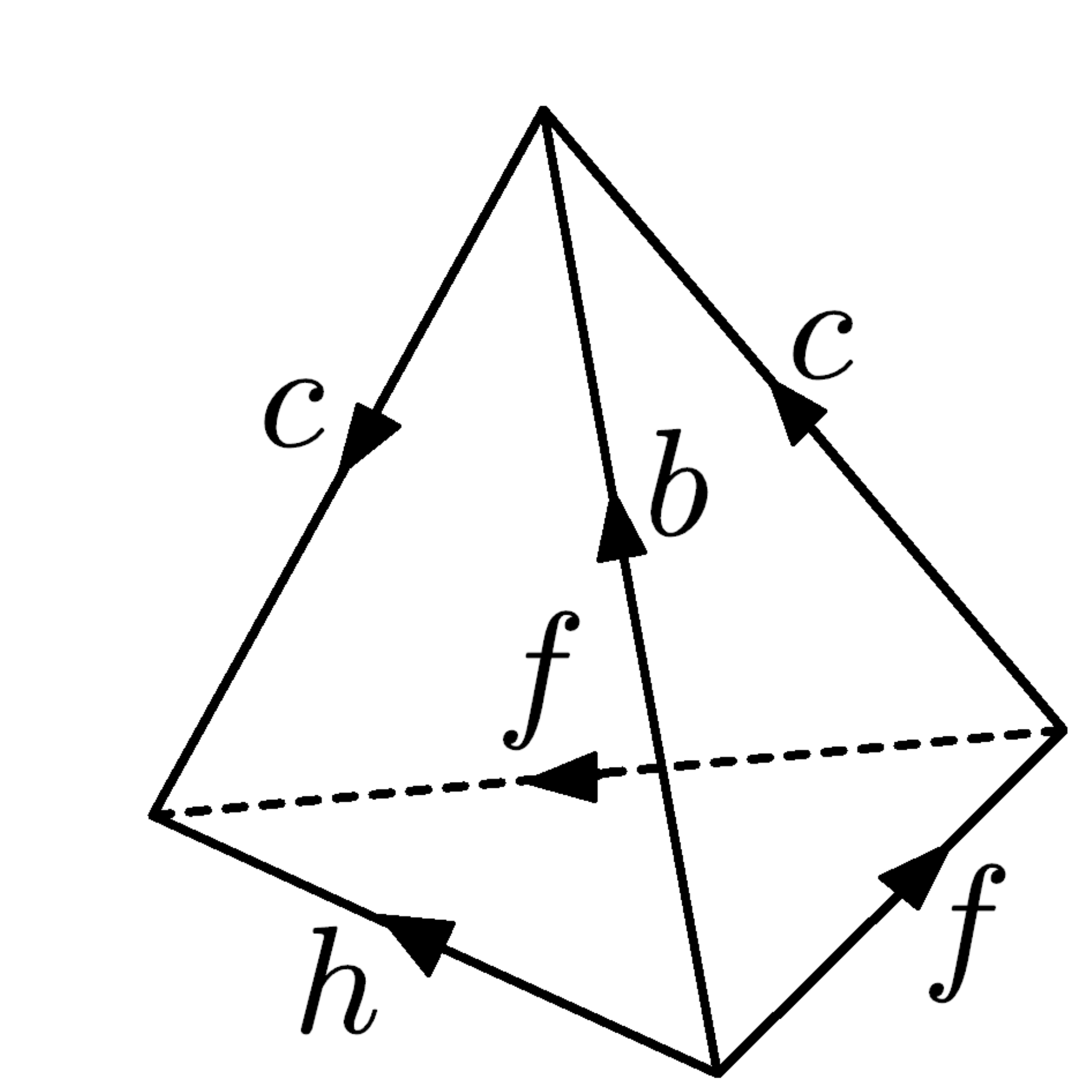}
\includegraphics[width=2.4cm]{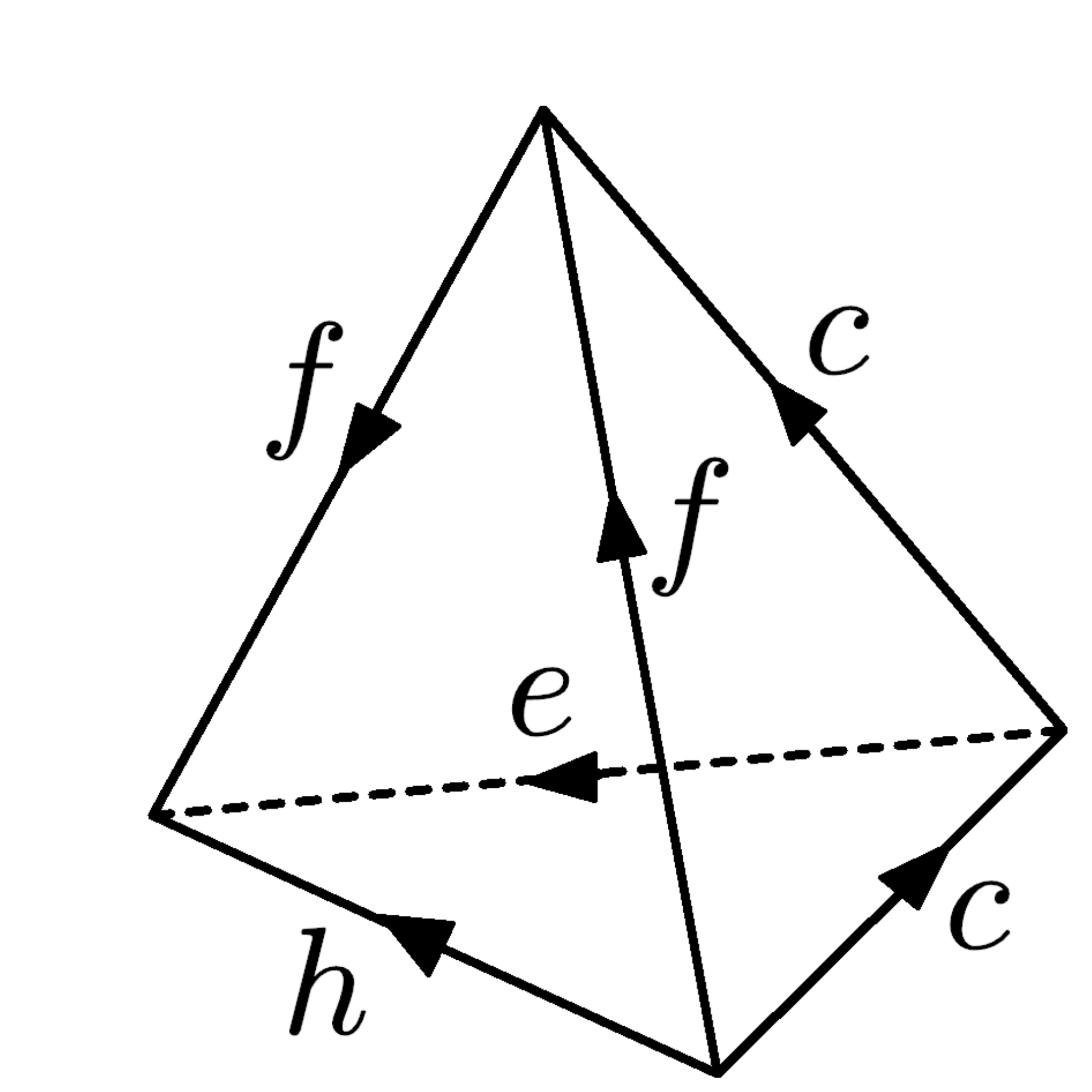}
\caption{An ideal triangulation of $s778$ with a local order.}
\end{figure}

After positive (2,3)-Pachner moves twice, the ideal triangulation of $s778$ which consists of eight ideal tetrahedra admits the local order shown in Figure 16.  
The relations between the colors of edges are the following:

$a=d^2$, $b=e=d^3$, $c=d^5$, $f=d^{10}$, $g=d^4$, $h=d^8$, \;$d^{12}=1.$
 
$$Z(s778) = \sum_{d \in G, d^{12}=1} \alpha(d,d,d^2) \alpha(d^2,d,d) \alpha(d^2,d,d^2)\alpha(d^3,d^2,d^3)$$
$$\hspace{1.5cm} \times \alpha(d^3,d^{10},d^3) \alpha(d^5,d^5,d^{10}) \alpha(d^{10},d^5,d^5) \alpha(d^{10},d^5,d^{10}).$$













\begin{figure}
\centering
\includegraphics[width=2.4cm]{tetra069.pdf}
\includegraphics[width=2.4cm]{tetra070.pdf} 
\includegraphics[width=2.4cm]{tetra071.pdf}
\includegraphics[width=2.4cm]{tetra073.pdf}
\includegraphics[width=2.4cm]{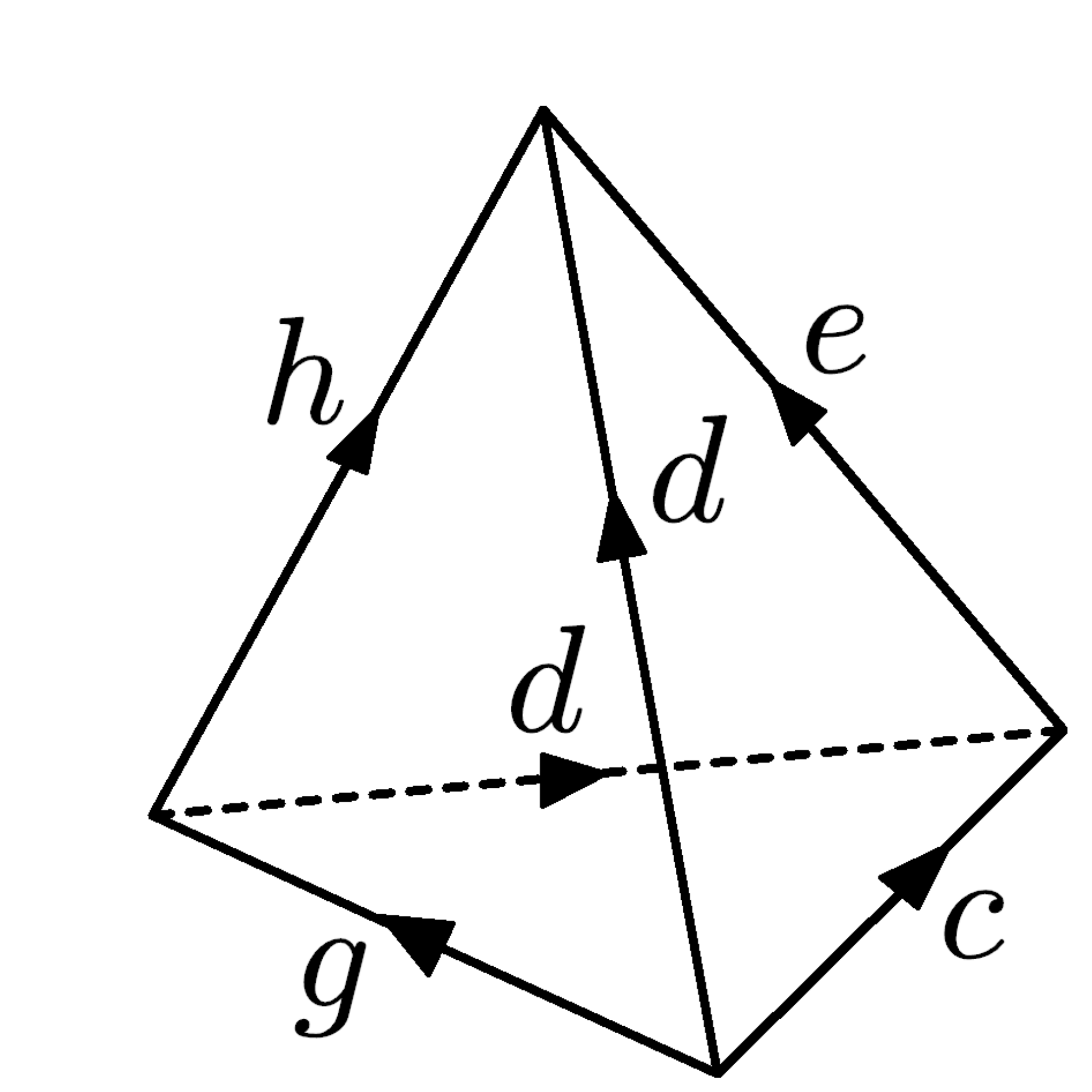}

\vspace{0.3cm}

\hspace{1cm}
\includegraphics[width=2.4cm]{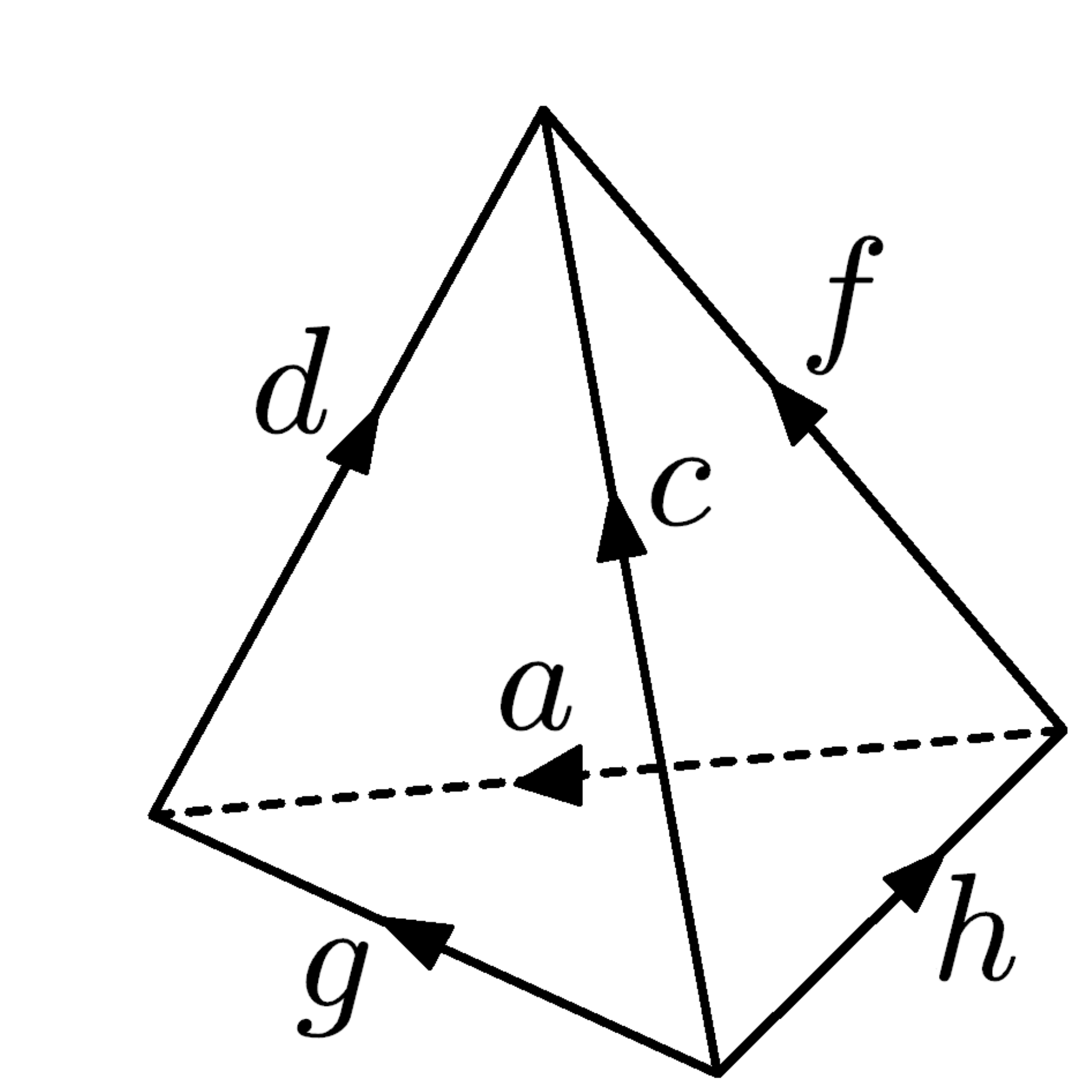}
\includegraphics[width=2.4cm]{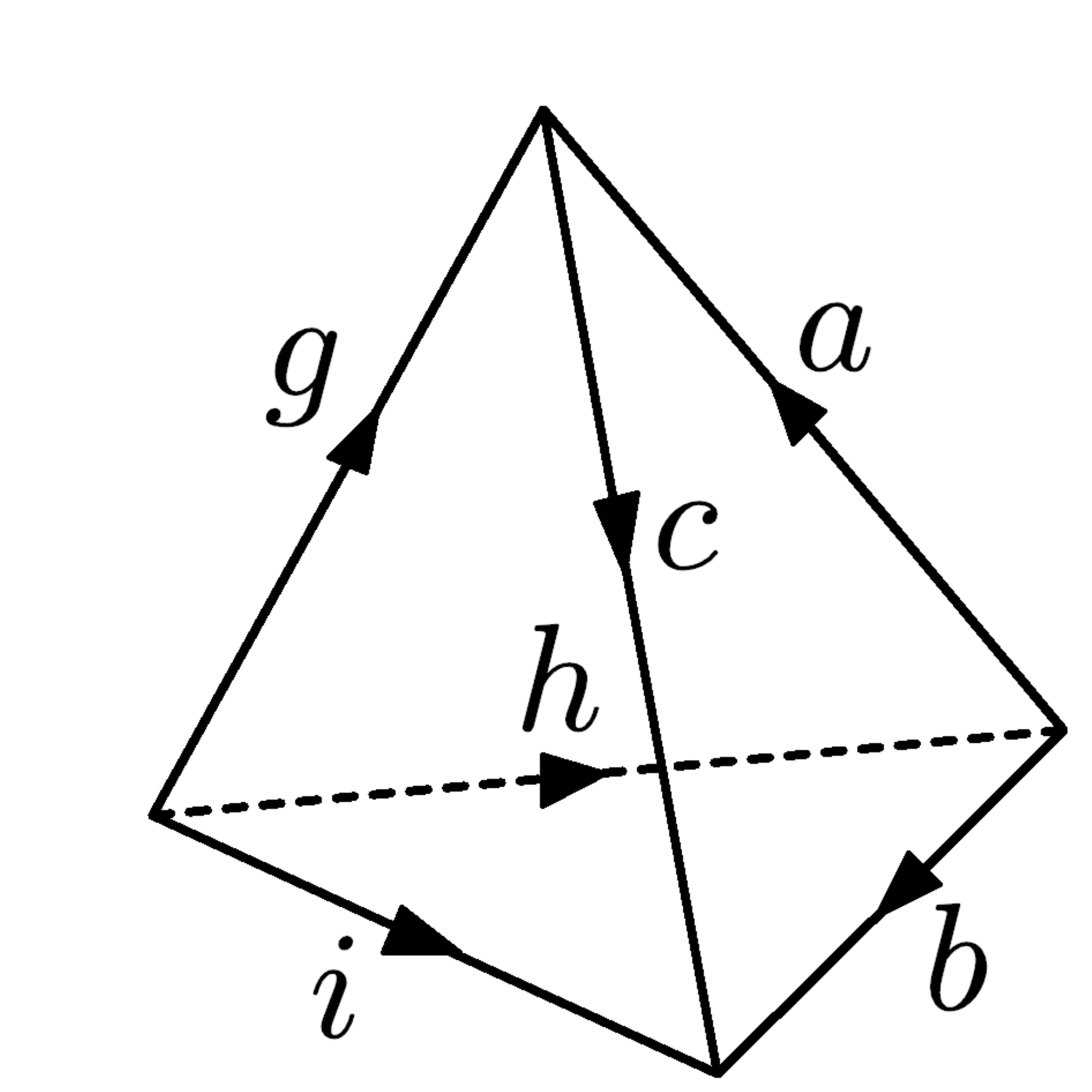}
\includegraphics[width=2.4cm]{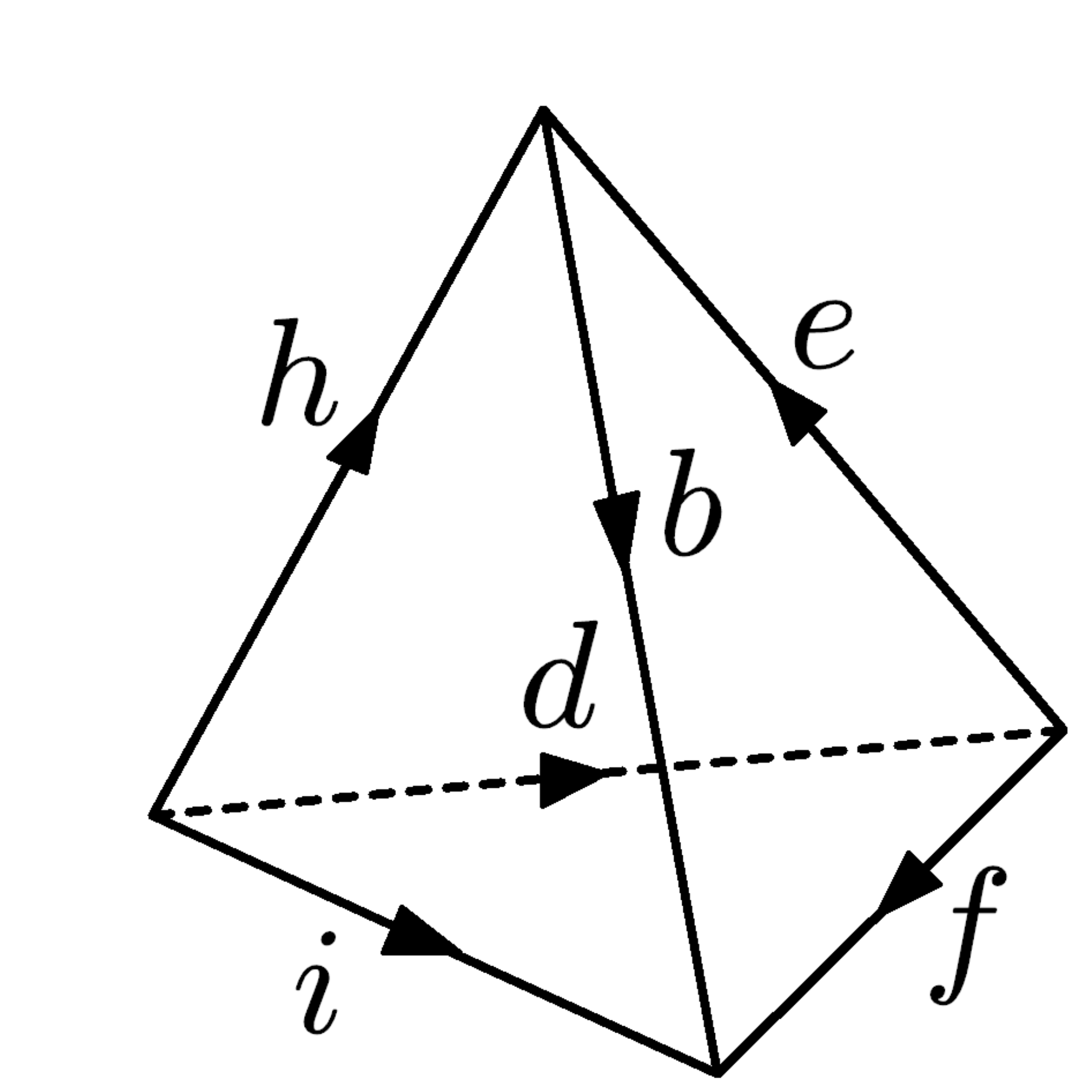}
\includegraphics[width=2.4cm]{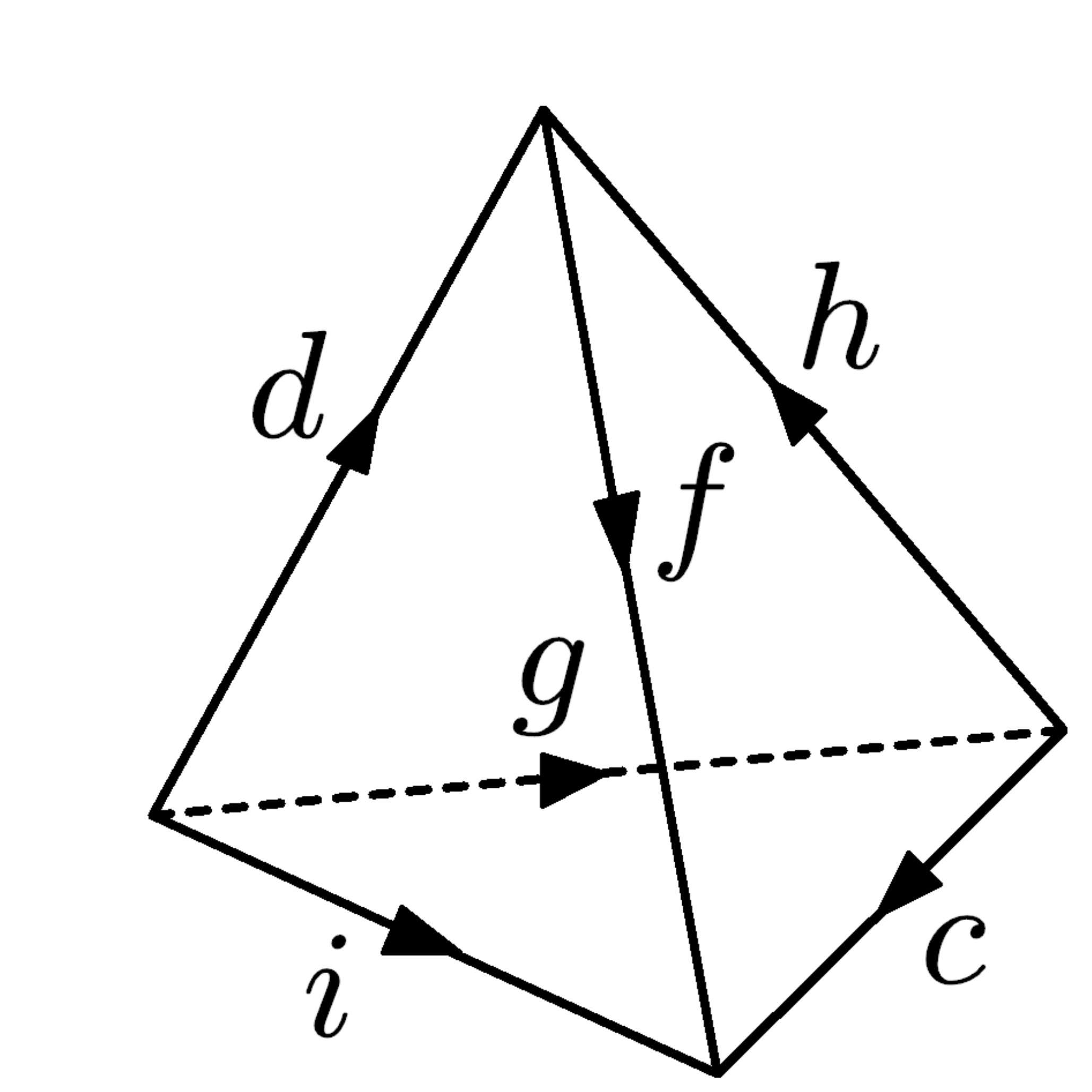}
\caption{An ideal triangulation of $s788$ with a local order.}
\end{figure}

After positive (2,3)-Pachner moves three times, the ideal triangulation of $s788$ which consists of nine ideal tetrahedra admits the local order shown in Figure 17.  
The relations between the colors of edges are the following:

$b=e=a^9$, $c=a^8$, $d=a^5$, $f=a^6$, $g=a^3$, $h=a^2$, $i=a^{-1}$, \;$a^{12}=1.$

$$Z(s788) = \sum_{a \in G, a^{12}=1} \alpha(a^5,a,a^2) \alpha(a^6,a^2,a^3) \alpha(a^8,a,a^2) \alpha(a^8,a,a^8)^{-1}$$
$$\times \alpha(a^8,a^5,a^8)^{-1} \alpha(a^8,a^9,a^8)^{-1} \alpha(a^9,a^5,a^3)^{-1} \alpha(a^9,a^8,a)^{-1} \alpha(a^9,a^9,a^5).$$

In order to confirm $Z(s778) \neq Z(s788)$, we calculate $Z(s778)$ and $Z(s788)$ for 
$G = \mathbb{Z}_{12}$ and a generator $\alpha$ of $H^3(\mathbb{Z}_{12}, U(1)) \cong  \mathbb{Z}_{12}$. 

$$Z(s778) = -6,\quad Z(s788) = 3-2\sqrt{3}.$$


Hence the generalized DW invariants distinguish $s778$ and $s788$.

\end{document}